\newcommand{\isom}{\stackrel{\sim}\to}
\newcommand{\catD}{{D}}
\newcommand{\dbc}[1]{{D}^b(#1)}
\newcommand{\dpc}[1]{{D}^+(#1)}
\newcommand{\dmc}[1]{{D}^-(#1)}
\newcommand{\cdbc}[1]{{D}^b_c(#1)}
\newcommand{\fmf}[3]{{\Phi^{#1}_{{\scriptscriptstyle #2\!\rightarrow\! #3}}}}
\newcommand{\Hom}{{\operatorname{Hom}}}
\newcommand{\Aut}{{\operatorname{Aut}}}
\newcommand{\FM}{{\operatorname{FM}}}
\newcommand{\NS}{{\operatorname{NS}}}
\newcommand{\vb}{{\operatorname{Vec}}}
\newcommand{\End}{{\operatorname{End}}}
\newcommand{\SL}{{\operatorname{SL}}}
\newcommand{\lotimes}{{\,\stackrel{\mathbf L}{\otimes}\,}}
\DeclareMathOperator{\Pic}{{Pic}}
\newcommand{\bbZ}{{\mathbb Z}}
\newcommand{\bbP}{{\mathbb P}}
\newcommand{\cE}{{\mathcal E}}
\newcommand{\cI}{{\mathcal I}}
\newcommand{\calL}{{\mathcal L}}
\newcommand{\cK}{{\mathcal K}}
\newcommand{\cM}{{\mathcal M}}
\newcommand{\cO}{{\mathcal O}}
\newcommand{\cP}{{\mathcal P}}
\newcommand{\bR}{{\mathbf R}}
\newcommand{\bL}{{\mathbf L}}
\newcommand{\cplx}[1]{{{\mathcal #1}^{\scriptscriptstyle\bullet}}}
\newcommand{\dSHom}[1]{{\mathcal{H}om_{#1}^{\scriptscriptstyle\bullet}}}
\newcommand{\rcplx}[2]{{{\mathcal #1}_{#2}^{\scriptscriptstyle\bullet}}}
\newcommand{\rk}{\operatorname{rk}}
\newcommand{\ch}{\operatorname{ch}}
\newcommand{\iso}{{\,\stackrel {\textstyle\sim}{\to}\,}}
\newcommand{\lra}{\longrightarrow}
\newcommand{\hra}{\hookrightarrow}
\newtheorem{thm}{Theorem}[section]
\newtheorem*{thm*}{Theorem}
 \newtheorem*{prop*}{Proposition}
\newtheorem{cor}[thm]{Corollary}
\newtheorem{lem}[thm]{Lemma}
\newtheorem{prop}[thm]{Proposition}
\theoremstyle{definition}
\newtheorem{defin}[thm]{Definition}
\theoremstyle{remark}
\newtheorem{rema}[thm]{Remark}
\newtheorem{exe}[thm]{Example}
\newenvironment{rem}{\begin{rema}}{\hfill\hspace{1pt}$\triangle$\end{rema}}
\numberwithin{equation}{section} 
\begin{document}
\title[Relative Fourier-Mukai transforms on fibrations]{Relative Fourier-Mukai transforms for Weierstra\ss\ fibrations, abelian schemes  and Fano fibrations}

\date{\today}
\thanks {Work supported by research projects MTM2009-07289 (MEC)
and GR.46/2008 (JCYL).}
\subjclass[2000]{Primary: 18E25; Secondary: 18E30, 
14F05, 14J10, 14K99} \keywords{Relative integral
functors, Fourier-Mukai transforms, elliptic
fibration, abelian scheme, Fano fibration, autoequivalence, moduli}

\author[A. C. L\'opez Mart\'{\i}n]{Ana Cristina L\'opez Mart\'{\i}n}
\author[D. S\'anchez G\'omez]{Dar\'{\i}o S\'anchez G\'omez}
\author[C. Tejero Prieto]{Carlos Tejero Prieto}
\email{anacris@usal.es, dario@usal.es, carlost@usal.es}
\address{Departamento de Matem\'aticas and Instituto Universitario de F\'{\i}sica Fundamental y Matem\'aticas
(IUFFyM), Universidad de Salamanca, Plaza de la Merced 1-4, 37008
Salamanca, Spain.}

\begin{abstract}  
We study the group of relative Fourier-Mukai transforms for Weierstra\ss\ fibrations, abelian schemes and Fano or anti-Fano fibrations. For Weierstra\ss\ and Fano or anti-Fano fibrations  we  describe this group completely. For abelian schemes over an arbitrary base we prove that if two of them are relative Fourier-Mukai partners then there is an isometric isomorphism between the fibre products of each of them and its dual abelian scheme. If the base is normal and the slope map is surjective we show that these two conditions are equivalent.  Moreover in this situation we completely determine the group of relative Fourier-Mukai transforms and we prove that the number of relative Fourier-Mukai partners of a given abelian scheme over a normal base is finite. \end{abstract}

\maketitle
\setcounter{tocdepth}{1}

{\small \tableofcontents }

\section*{Introduction}
In this paper we endeavour to determine, for several fibrations $X\to T$, the group $\FM_T(\cdbc{X})$ of relative Fourier-Mukai transforms (i.e autoequivalences of the bounded derived category of $X$ representable by an object $\cplx{K}\in \cdbc{X\times_T X}$ supported on the fibered product $X\times_TX$).

In the absolute case, that is, when the base scheme $T$ is the spectrum of a field $\Bbbk$, if $X$ is projective, thanks to Orlov's representation theorem  \cite{Or97} as extended by Ballard \cite{Ballard09}, any (exact) autoequivalence of the bounded derived
category of $X$  is an integral functor and thus the group $\FM_\Bbbk(\cdbc{X})$ coincides  with the whole group $\Aut_\Bbbk\cdbc{X}$ of autoequivalences. The determination of this group is widely recognized as an important problem and has been considered by several authors (see \cite{BO02},  \cite{HiVdB}, \cite{Pol96}, \cite{Or02}, \cite{HMS09} and  \cite{BroPloog10}).


In the relative setting of a fibration $X\to T$,  the analogous role of the group of autoequivalences is played by the group $\Aut_T\cdbc{X}$  of $T$-linear autoequivalences of the bounded derived category of $X$, a  notion introduced for the first time by Kuznetsov in \cite{Kuz07}.  Projection formula in the derived category implies that $\FM_T(\cdbc{X})$  is a subgroup of  $\Aut_T\cdbc{X}$, but in contrast to the absolute case,  it is still an open problem to prove if both groups are equal or not (see \cite{CanStella11} for a recent survey on the subject). 

The characterization of all $T$-linear autoequivalences seems to be a hard problem. As a first step in this direction one might try to determine the subgroup $\FM_T(\cdbc{X})$ given by relative Fourier-Mukai transforms. Notice moreover that this subgroup  is  interesting on its own since compatibility of relative Fourier-Mukai transforms with base change make them a powerful tool with important  applications in Geometry and Physics. Let us cite here just three of them. The first one is that they allow one to establish, in a very natural way, algebraic isomorphisms between relative moduli spaces of sheaves, see \cite[Section 6.4.2]{BBH08}, \cite{HLST09}. The second is that one can use them to construct stable vector bundles on fibered varieties, \cite{HMP02, AH03, AHS10}. The last one is that they serve to formulate in mathematical terms what physicists call fibrewise  $T$-duality for $D$-branes in mirror symmetry \cite{AYCH01}.  However, to the best of our knowledge, up to now nobody has determined the group of relative Fourier-Mukai transforms $\FM_T(\cdbc{X})$ for any fibration. 
We propose to remedy here this situation.The main contribution of this paper is the description of this group for Weierstra\ss\ fibrations, abelian schemes and Fano or anti-Fano fibrations as an extension of groups. The group $\FM_T(\cdbc{X})$ has the natural subgroup $\FM_T^0(\cdbc{X})=\Aut(X/T)\rtimes(\bbZ\times  \Pic(X)) $ generated by the so called trivial autoequivalences: relative automorphisms, shifts of complexes and twists by line bundles. Therefore one might try to describe the group of relative Fourier-Mukai transforms as an extension whose kernel is this natural subgroup. However this is not always possible, since, in general,  $\FM_T^0(\cdbc{X})$ is not a normal subgroup of  $\FM_T(\cdbc{X})$. Hence, the best we can ask for is that the kernel of the extension be a suitable subgroup of $\FM_T^0(\cdbc{X})$ normal inside $\FM_T(\cdbc{X})$.

Let us explain now our strategy for achieving this goal. In first place we make use of a very useful result proved recently in a paper coauthored by one of the authors  \cite[Proposition 2.15]{HLS08} namely that a relative integral functor between the derived category of locally projective fibrations is a relative Fourier-Mukai transform if and only if its restriction to every fibre is a Fourier-Mukai transform. Therefore we have a group morphism $$\alpha\colon\FM_T(\cdbc{X})\to\prod_{t\in T}\FM(\cdbc{X_t})\, .$$
Nevertheless,  notice that even  with this result at hand and knowing the precise descrip\-tion of the group of Fourier-Mukai transforms of the fibers, it is not  a trivial problem to describe the group of relative Fourier-Mukai transforms for the fibration since $\alpha$ is not surjective. Thus, in order to be able to describe this group we need more information regarding the behavior of restrictions of kernels to fibers. We get the additional information from two key results. The first one is
 
 \begin{prop*}[Proposition \ref{prop: ncte}] Let $\cplx{K}$ be an object in $\cdbc{X\times_ T Y}$. Suppose $X$ is connected and that for every closed point $t\in T$, the restriction $\bL j_t^\ast \cplx{K}\simeq \mathcal{K}_t[n_t]$ where $\cK_t$ is a sheaf on $X_t\times Y_t$ flat over $X_t$ and $n_t\in \mathbb{Z}$. Then, $\cplx{K}\simeq \mathcal{K}[n]$ for some  sheaf $\mathcal{K}$ on $X\times_T Y$ flat over $X$ and  some $n\in \mathbb{Z}$.
 \end{prop*}

We prove that the kernels for relative Fourier-Mukai transforms for the fibrations considered in this paper satisfy  the conditions of this Proposition.  For Weierstra\ss\ fibrations this follows from the fact that all Fourier-Mukai kernels of the fibres are shifted universal sheaves, see Proposition \ref{prop:flatkernel}. In the case of abelian schemes we prove it in Proposition  \ref{flatness}. For Fano and anti-Fano fibrations this is immediate since in this case all Fourier-Mukai transforms of the fibres are trivial.

The second key result that we need is the generalization to the relative setting of the well known fact which states that a integral functor that sends skyscraper sheaves to skyscraper sheaves is a trivial equivalence:

\begin{prop*}[Proposition  \ref{p:trivial}] Let $\cplx{K}$ be an object in $\cdbc{X\times_ T Y}$ of finite homological dimension over $X$ and let $\fmf{\cplx{K}}{X}{Y}$ be the corresponding integral functor. Assume that for every closed point $x\in X$ one has $\fmf{\cplx{K}}{X}{Y}(\mathcal{O}_x)\simeq \mathcal{O}_y$ for some closed point $y\in Y$. Then, there exists a  relative morphism $f\colon X\to Y$ and a line bundle $\mathcal{L}$ on $X$ such that $\fmf{\cplx{K}}{X}{Y}(\cplx{E})\simeq \bR f_\ast(\mathcal{L}\otimes\cplx{E})$ for any object $\cplx{E}$ in $\cdbc{X}$.
\end{prop*}

By means of these two key results we obtain the following descriptions of the groups of relative Fourier-Mukai transforms:

\begin{thm*}[Theorem \ref{t:FMW}] Let $X\to T$ be a Weierstra\ss \  fibration over a connected base scheme.
There exists an  exact sequence
$$1\lra \Aut^0_T\cdbc{X}\lra \FM_T(\cdbc{X})\stackrel{\widetilde{\ch}}\lra \SL_2(\bbZ)\lra 1\, ,$$
with $\Aut^0_T\cdbc{X}=\Aut(X/T)\rtimes(2\bbZ\times \Pic^{\underline 0}(X)) $, where $$\Pic^{\underline 0}(X)=\{\calL\in \Pic(X) ;  \deg(\calL_t)=0 \text{, for any } t\in T\}.$$
\end{thm*}

For abelian schemes, among other results, we prove 

\begin{thm*}[Theorem \ref{t:FMabelianoNormal}] Let $X\to T$ be an abelian scheme over a connected normal base such that the slope map $\mu\colon \vb^{sh}(X\times_T X/T) \to  \NS(X\times_T X/T)\otimes_\mathbb{Z}\mathbb{Q}$ is surjective. Then there is a short exact sequence of groups:
$$0\to \mathbb{Z}\oplus (X(T)\times \hat{X}(T)\times \Pic(T))\to \FM_T(\cdbc{X})\to U(X\times_T\hat{X})\to 1\, .$$
\end{thm*}

Finally for  Fano and anti-Fano fibrations, we get

\begin{thm*}[Theorem \ref{t:Fano}] Let $X\to T$ be a Fano or anti-Fano fibration.Then,
the group $\FM_T(\cdbc{X})$ of relative Fourier-Mukai transforms is generated by relative automorphisms of $X$, twists by line bundles and shifts. That is
$$\FM_T(\cdbc{X})\simeq \Aut (X/T)\ltimes (\Pic(X)\oplus \bbZ)\, .$$
\end{thm*}

Let us now explain in more detail the contents of the paper and its organization.

In Section 1 we briefly recall some facts about the theory of relative integral functors and relative Fourier-Mukai transforms. This section also contains the key  results mentioned before and that we use all along the article.
In Section 2,  we give the description (Theorem \ref{t:FMW}) of the group of relative Fourier-Mukai transforms for any relatively integral elliptic fibration over a connected base scheme as an extension of the subgroup of even shift trivial transforms by $\SL_2(\bbZ)$. 
 In Section 3, we study the case of a projective abelian scheme,  first  over a connected base scheme  and then we pass to analyse  the particular case of a connected normal base scheme.  For  an arbitrary connected base $T$, we prove (Theorem \ref{t:FM-partners}) that if two abelian schemes $X\to T$, $Y\to T$ are relative Fourier-Mukai partners, then there is an isometric isomorphism $f\colon X\times_T\hat X \xrightarrow{\sim} Y\times_T\hat Y$. We also show (Corollary \ref{c:FMabeliano})  that the group of relative Fourier-Mukai autoequivalences forms part of a short exact sequence whose quotient term is contained in the group of isometric automorphisms. We prove that they coincide (Theorem \ref{t:abelpol}) if the abelian scheme has minimal endomorphisms and admits a line bundle which induces a polarization.   When the base is normal,  we  prove a finiteness result  (Theorem \ref{t:finitepartner}) for relative Fourier-Mukai partners. If in addition the slope map is surjective we show (Theorem \ref{t:equivalent}) that the property of being relative Fourier-Mukai partners is equivalent to the existence of an isometric isomorphism. Moreover, under the same conditions we are able to complete (Theorem \ref{t:FMabelianoNormal}) the description of the group of relative Fourier-Mukai autoequivalences, proving that it is an extension of the group of isometric isomorphisms.  Section 4  contains the description of the group of relative Fourier-Mukai transforms for Fano and anti-Fano fibrations. In this case, we prove (Theorem \ref{t:Fano}) that the group of relative Fourier-Mukai autoequivalences coincides with the trivial transforms and moreover that every $T$-linear equivalence is a relative  Fourier-Mukai transform.

\subsubsection*{Conventions}

In this paper, scheme means separated  scheme of
finite type over an algebraically closed field $\Bbbk$ of characteristic zero and unless otherwise stated a point means a closed point. By a
Gorenstein morphism, we understand a flat
morphism of schemes whose fibres are  Gorenstein. For any scheme $X$ we denote by $\catD(X)$ the
derived category of complexes of $\cO_X$-modules with
quasi-coherent cohomology sheaves. Analogously $\dpc{X}$, $\dmc{X}$
and $\dbc{X}$ denote the derived categories of complexes
which are respectively bounded below, bounded above and bounded on
both sides, and have quasi-coherent cohomology sheaves. The
subscript $c$ refers to the corresponding subcategories of
complexes with coherent cohomology sheaves. For abelian schemes we always assume that the base scheme is connected. For any relative scheme $p\colon X\to T$, we denote by $\textbf{Pic}_{X/T}$ the relative Picard functor and if it is representable then $\Pic_{X/T}$ denotes the representing scheme. We also use the notation $\Pic(X/T):=\textbf{Pic}_{X/T}(T)$.

\section{Relative Fourier-Mukai transforms}
To start with we give a short review of some basic features of the theory of relative integral functors and relative Fourier-Mukai transforms. For further details we refer to \cite{BBH08, Huybbook}.

Let $p\colon X\to T$ and $q\colon Y\to T$ be proper morphisms. We denote by $\pi_i$ the projection of the fibre product  $X\times_T Y$ onto the $i$-factor and by $\rho=p\circ\pi_1=q\circ\pi_2$ the projection of $X\times_T Y$ onto the base $T$. We have the diagram
$$\xymatrix{& X\times_T Y\ar[dl]_{\pi_1}\ar[dd]^\rho\ar[dr]^{\pi_2} &\\
                     X\ar[dr]_p & & Y\ar[dl]^q\\
	             & T &  }$$
Let $\cplx K$ be an object in $D^-(X\times_T Y)$, the {\it relative integral functor} defined by $\cplx{K}$ is the functor $\Phi^{\cplx K}_{X\to Y}\colon D^-(X)\to D^-(Y)$ given by 
$$\Phi_{X\to Y}^{\cplx K}(\cplx{E})=\bR \pi_{2_\ast}(\bL \pi_1^\ast\cplx{E}\lotimes\cplx{K})\, .$$

The complex $\cplx K$ is said to be the kernel of the integral functor. Note that any relative integral functor  can be considered as an absolute integral functor. 
One just considers the immersion $\iota\colon X\times_T Y\hra X\times Y$ and then  $\Phi^{\cplx K}_{X\to Y}$ coincides with the absolute integral functor with kernel $\iota_\ast\cplx K$.

As in the absolute situation, the composition of two relative integral functors is obtained by convolving the corresponding kernels. That is, given two kernels $\cplx{K}\in D^-(X\times_TY)$ and $\cplx{H}\in D^-(Y\times_T Z)$ corresponding to the relative integral functors $\Phi$ and $\Psi$, the kernel of the composition $\Psi\circ\Phi$ is given by the formula
\begin{equation}\label{e:compos}
\cplx{H}\ast_T\cplx{K}=\bR\pi_{XZ\ast}(\bL\pi_{XY}^\ast\cplx{K}\lotimes\bL\pi_{YZ}^\ast\cplx{H})\, , 
\end{equation}
where $\pi_{XZ},\pi_{XY}$ and $\pi_{YZ}$ are the projections from $X\times_TY\times_T Z$ onto $X\times_TZ$,  $X\times_TY$ and  $Y\times_TZ$ respectively.

In order to determine when integral functors map bounded complexes to bounded complexes, the following notion was introduced in \cite{HLS07}.
\begin{defin}
Let $f\colon Z \to T$ be a morphism of
schemes. An object $\cplx E$ in $\cdbc{Z}$ is said to be of
\emph{finite homological dimension over $T$}  if $\cplx E\lotimes
\bL f^\ast \cplx G$ is bounded for any $\cplx{G}$ in $\cdbc{T}$.
\end{defin}

Under the assumption that $X\to T$ is locally projective, it is known \cite[Proposition 2.7]{HLS08} that a relative integral functor $\fmf{\cplx K}{X}{Y}$ given by an object $\cplx K$ in $\cdbc{X\times_T Y}$ can be extended to a functor $\fmf{\cplx K}{X}{Y}\colon D(X)\to D(Y)$, mapping $\cdbc{X}$ to $\cdbc{Y}$ if and only if $\cplx K$ is of finite homological dimension over $X$. Moreover if $X\to T$ and $Y\to T$ are projective morphisms and $\fmf{\cplx{K}}{X}{Y}\colon \cdbc{X}\to \cdbc{Y}$ is an equivalence of cate\-gories, then it is also known \cite[Proposition 2.10]{HLS08}  that its kernel $\cplx K\in \cdbc{X\times_T Y}$ has to be of finite homological dimension over both $X$ and $Y$.

A functor $F\colon \cdbc{X}\to \cdbc{Y}$ is said to be $T$-{\it linear} if for any $\cplx{E}\in \cdbc{X}$ and any $\cplx{N}\in \cdbc{T}$, one has
$$F(\cplx{E}\lotimes \bL p^\ast\cplx{N})\simeq F(\cplx{E})\lotimes \bL q^\ast\cplx{N}\, .$$
The projection formula shows that any relative integral functor  $\Phi^\cplx{K}_{X\to Y}$ is $T$-linear. A relative integral functor $\Phi^\cplx{K}_{X\to Y}$ is said to be a {\it relative Fourier-Mukai transform} if it is an equiva\-lence.  From now on we will denote by $\Aut_T\cdbc{X}$ the group of all $T$-linear autoequivalences of $\cdbc{X}$ and by  $\FM_T(\cdbc{X})$ the subgroup consisting of relative Fourier-Mukai transforms, that is, the group of autoequivalences $\Phi^\cplx{K}$ of $\cdbc{X}$ whose kernel is an object, not just of $\cdbc{X\times X}$, but also of $\cdbc{X\times_T X}$.

\begin{rem} \begin{enumerate}\item As we have mentioned above, when the morphism $X\to T$ is  projective, the kernel of any $\Phi^\cplx{K}\in \FM_T(\cdbc{X})$ has finite homological dimension over $X$.
\item In the case that $X$ is a projective scheme itself, recent results by Ballard \cite{Ballard09} (see also \cite{OL10}), prove that any autoequivalence of $\cdbc{X}$ is a (absolute) Fourier-Mukai transform, so that we are studying the subgroup of $\Aut\cdbc{X}$ consisting of those Fourier-Mukai transforms whose kernel is supported on the fibred product $X\times_T X$.
\end{enumerate}
\end{rem}

For the sake of simplicity we will fix the following notations. For each closed point $x\in X$, we denote by $j_x\colon\{x\}\times Y_t\hra X\times_T Y$ the inclusion of the fibre $\{x\}\times Y_t=\pi_1^{-1}(x)$, and $i_x\colon \{x\}\times Y_t\hra Y$ denotes the composition $\pi_2\circ j_x$, with $t=p(x)$. Furthermore, for any sheaf $\cK$ on $X\times_T Y$ flat over $X$ we denote by $\cK_x$ the sheaf on $Y_{t}$ obtained by restricting $\cK$ to the fibre $\{x\}\times Y_t$ of $\pi_1\colon X\times_T Y\to X$. If $\cplx K$ is an object in
$\cdbc{X\times_T Y}$ and $\Phi=\fmf{\cplx{K}}{X}{Y}$, for any
closed point $t\in B$ we denote by $\Phi_t\colon \dmc{X_t} \to \dmc{Y_t}$ the integral
functor defined by $\bL j_t^\ast \cplx{K}$, with
$j_t\colon X_t\times Y_t\hookrightarrow X\times_T Y$  the closed immersion of the fibre of $\rho\colon X\times_T Y\to T$ over $t\in T$.

Assume that the morphisms  $p\colon X\to T$ and $q\colon Y\to T$ are both {\it proper and flat}.
From the base change formula (see \cite{BBH08}) we obtain that
\begin{equation}\label{e:basechange}\bL
i_t^\ast\Phi (\cplx{F})\simeq\Phi_t(\bL i_t^\ast\cplx{F})\, ,
\end{equation}
for every $\cplx{F}\in \catD(X )$, where $i_t\colon
X_t\hookrightarrow X$ and $i_t\colon Y_t\hookrightarrow Y$ are the
natural embeddings.  In this situation, the base change formula also
gives that
\begin{equation}\label{e:directimage}
i_{t\ast}\Phi_t(\cplx{G})\simeq\Phi ( i_{t\ast}\cplx{G})\, ,
\end{equation}
for every $\cplx{G}\in \catD(X_s)$.
Using this formula, the following result (see \cite[Proposition 2.15]{HLS08}) proves, in great generality, that a relative
integral functor is a Fourier-Mukai transform if and only
if the absolute integral functors induced on the fibres are Fourier-Mukai transforms.

\begin{prop}\label{prop:relativeequiv}
Let $X\to T$ and $Y\to T$ be  proper and flat morphisms. Assume that $X\to T$ is locally projective and let $\cplx{K}$ be
an object in $\cdbc{X\times_T Y}$ of finite homological dimension
over both $X$ and $Y$. The relative integral functor
$\Phi=\fmf{\cplx{K}}{X}{Y}\colon \cdbc{X}\to \cdbc{Y}$ is an equivalence if and only if $\Phi_t\colon
\cdbc{X_t}\to \cdbc
{Y_t}$ is an equivalence
for every closed point $t\in T$.\qed
\end{prop}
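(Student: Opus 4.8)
The plan is to prove the nontrivial implication (fibrewise equivalence $\Rightarrow$ relative equivalence) by exhibiting a suitable adjunction and checking that its unit and counit are isomorphisms fibre by fibre. The reverse implication is the formal one: if $\Phi$ is an equivalence then, since $\cplx{K}$ has finite homological dimension over both $X$ and $Y$, the quasi-inverse is again a relative integral functor, and the compatibility of composition of integral functors with restriction to the fibres (base change for the convolution \eqref{e:compos}) together with \eqref{e:basechange} shows that the fibre functor of $\Phi$ and that of its inverse are mutually quasi-inverse; hence each $\Phi_t$ is an equivalence. So I would concentrate on the converse.

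First I would set up the adjunction at the level of kernels. Using the finite homological dimension of $\cplx{K}$ over both factors together with relative Grothendieck--Serre duality, the right adjoint $\Psi$ of $\Phi$ is again a relative integral functor, with kernel $\dcplx{K}$ obtained from $\cplx{K}$ by dualizing and twisting by the relative dualizing complex (suitably shifted); similarly for the left adjoint. The identity functor of $\cdbc{X}$ is the relative integral functor with kernel $\delta_{\ast}\cO_X$, where $\delta\colon X\to X\times_T X$ is the relative diagonal, and by \eqref{e:compos} the composite $\Psi\circ\Phi$ is the relative integral functor with kernel $\dcplx{K}\ast_T\cplx{K}$. The unit of adjunction is then induced by a canonical morphism of kernels
\begin{equation*}
u\colon \delta_{\ast}\cO_X\longrightarrow \dcplx{K}\ast_T\cplx{K},
\end{equation*}
and I let $\cplx{C}$ be its cone in $\cdbc{X\times_T X}$. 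Since the passage from kernels to functors is triangulated, $u$ induces the unit natural transformation and $\cplx{C}$ induces its cone; hence the unit is an isomorphism precisely when $\cplx{C}=0$. The counit is treated symmetrically by an analogous kernel on $Y\times_T Y$.

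The heart of the argument is to show $\bL j_t^{\ast}\cplx{C}=0$ for every closed point $t\in T$ and then to descend to $\cplx{C}=0$. Because $p$ and $q$ are flat, the fibre inclusions are Tor-independent over $T$, so restriction to the fibre commutes with the convolution and with the diagonal: one has $\bL j_t^{\ast}(\delta_{\ast}\cO_X)=\delta_{t\ast}\cO_{X_t}$, while $\bL j_t^{\ast}(\dcplx{K}\ast_T\cplx{K})$ is the convolution kernel of $\Psi_t\circ\Phi_t$, and $\bL j_t^{\ast}u$ is exactly the unit kernel morphism relating $\Phi_t$ to its adjoint $\Psi_t$. As $\Phi_t$ is an equivalence by hypothesis, $\Psi_t$ is its quasi-inverse and this unit is an isomorphism; therefore $\bL j_t^{\ast}\cplx{C}$, being the cone of an isomorphism, vanishes. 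Finally, a bounded complex with coherent cohomology whose derived restriction to every fibre vanishes is itself zero: taking the top nonvanishing cohomology sheaf $\calH^n(\cplx{C})$, its ordinary restriction to a fibre meeting its support is nonzero by Nakayama, yet it equals $\calH^n(\bL j_t^{\ast}\cplx{C})=0$, a contradiction. Hence $\cplx{C}=0$, the unit is an isomorphism, and by the symmetric argument so is the counit, so $\Phi$ is an equivalence.

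I expect the main obstacle to lie not in the concluding Nakayama descent, which is clean, but in the bookkeeping around base change: the delicate point is to verify that forming the adjoint kernel and the adjunction morphisms genuinely commutes with derived restriction to the fibres, i.e.\ that $\bL j_t^{\ast}u$ really is the unit morphism for the pair $(\Phi_t,\Psi_t)$ and not merely some morphism between the correct objects. This rests on the compatibility of relative Grothendieck--Serre duality with the base change $X_t\hookrightarrow X$, and it is precisely here that the flatness of $p$ and $q$ (hence Tor-independence) and the finite homological dimension of $\cplx{K}$ over both factors---ensuring that every complex in sight stays bounded with coherent cohomology and that the convolutions genuinely compute the composites---enter in an essential way.
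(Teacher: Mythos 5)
The paper does not actually prove this statement: it is quoted verbatim from \cite[Proposition~2.15]{HLS08} and stamped with a \verb|\qed|, so there is no internal proof to compare against. Your strategy --- realise the right adjoint as a relative integral functor via relative Grothendieck duality, express the unit and counit as morphisms of kernels, restrict the cone of each to the fibres, and kill it by Nakayama --- is essentially the argument of the cited source, and the overall architecture is sound.

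There is, however, one step that is asserted rather than proved, and it is not the base-change bookkeeping you flag but the passage from functors back to kernels. From the hypothesis that $\Phi_t$ is an equivalence you get that the unit \emph{natural transformation} $\Id\to\Psi_t\circ\Phi_t$ is an isomorphism; you then conclude that the \emph{kernel morphism} $\bL j_t^{\ast}u$ is an isomorphism, ``being the cone of an isomorphism.'' The assignment from kernels to functors is not known to be conservative here (the fibres may be singular, and uniqueness of kernels is delicate), so an isomorphism of functors does not immediately yield an isomorphism of the kernels inducing it. The gap is fixable by the very technique you use at the end, applied one level down: if $\cplx{C}_t$ is the cone of $\bL j_t^{\ast}u$ on $X_t\times X_t$, the triangle of functors shows $\Phi^{\cplx{C}_t}(\cplx{E})=0$ for every $\cplx{E}$; evaluating on skyscrapers $\cO_x$ and using that $\pi_1$ is flat, one gets $\Phi^{\cplx{C}_t}(\cO_x)\simeq (\pi_2\circ j_x)_{\ast}\bL j_x^{\ast}\cplx{C}_t$, hence $\bL j_x^{\ast}\cplx{C}_t=0$ for all closed $x\in X_t$, and the Nakayama argument on the top cohomology sheaf forces $\cplx{C}_t=0$. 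The same remark applies to your sketch of the ``easy'' direction, where you again need kernel-level, not merely functor-level, isomorphisms before restricting to a fibre. With this insertion the proof is complete and agrees in substance with \cite[Proposition~2.15]{HLS08}.
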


Under the conditions of the above proposition, we consider for any morphism $Z\to T$  the following base change diagram
$$
\xymatrix{ & Z\times_T X\times_T Y\ar[dr]^{\pi_{ZY}}\ar[dl]_{\pi_{ZX}} &\\
                   Z\times_T X\ar[dr]_{\pi_1} & & Z\times_T Y\ar[dl]^{\pi_1}\\
                   & Z &}
$$
where the morphisms are the natural projections. The complex $\mathcal{K}_Z^{\scriptscriptstyle\bullet}=\pi_{XY}^\ast\cplx K$ gives rise to a relative integral functor $\Phi_Z\colon D^-(Z\times_T X)\to D^-(Z\times_T Y)$ given by the formula
\begin{equation}\label{eq:basechangeFM}
\Phi_Z(\cplx E)=\bR{\pi_{ZY}}_\ast(\pi_{ZX}^\ast \cplx E\lotimes \rcplx{K}{Z})\,.
\end{equation}
If the kernel $\cplx K$ is of finite homological dimension over $X$, then $\rcplx{K}{Z}$ is of finite homological dimension over $Z\times_T X$ and therefore $\Phi_Z$ maps $\cdbc{Z\times_T X}$ to $\cdbc{Z\times_T Y}$. 
A straightforward consequence of Proposition \ref{prop:relativeequiv} is the following result.
\begin{cor} Let $X\to T$ and $Y\to T$ be  proper and flat morphisms. Assume that $X\to T$ is locally projective and let $\cplx{K}$ be
an object in $\cdbc{X\times_T Y}$ of finite homological dimension
over both $X$ and $Y$
 such that the relative integral functor $\fmf{\cplx K}{X}{Y}\colon\cdbc{X}\to\cdbc{Y}$ is an equivalence of categories. Then, for every morphism $Z\to T$ the relative integral functor $\Phi_Z\colon \cdbc{Z\times_T X}\to\cdbc{Z\times_T Y}$ is also an equivalence of categories.\qed
\end{cor}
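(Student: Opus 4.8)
The plan is to apply Proposition~\ref{prop:relativeequiv} to the functor $\Phi_Z$, now regarding $Z\times_T X$ and $Z\times_T Y$ as proper flat schemes over the \emph{new} base $Z$. This immediately reduces the claim to verifying that the fibre functor $(\Phi_Z)_z$ is an equivalence for every closed point $z\in Z$, which I would then relate to the fibre functors $\Phi_t$ of the original transform, known to be equivalences.

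First I would confirm that the hypotheses of Proposition~\ref{prop:relativeequiv} hold over the base $Z$. Since $p\colon X\to T$ and $q\colon Y\to T$ are proper and flat, their base changes $Z\times_T X\to Z$ and $Z\times_T Y\to Z$ are again proper and flat, and the local projectivity of $X\to T$ is inherited by $Z\times_T X\to Z$. It then remains to note that the kernel $\rcplx{K}{Z}=\pi_{XY}^\ast\cplx K$ has finite homological dimension over both $Z\times_T X$ and $Z\times_T Y$: this is precisely the statement recalled just before the corollary (finite homological dimension of $\cplx K$ over $X$ yields that of $\rcplx{K}{Z}$ over $Z\times_T X$), applied once for $X$ and once, by symmetry, for $Y$.

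The heart of the argument is the identification of the fibre functors. Fixing a closed point $z\in Z$ with image $t\in T$, the fibre of the projection $\rho_Z\colon Z\times_T X\times_T Y\to Z$ over $z$ is canonically $X_t\times Y_t$, and the composite of its inclusion $j_z\colon X_t\times Y_t\hra Z\times_T X\times_T Y$ with $\pi_{XY}$ is exactly the fibre inclusion $j_t\colon X_t\times Y_t\hra X\times_T Y$. Using functoriality of derived pullback, $\bL(\pi_{XY}\circ j_z)^\ast=\bL j_z^\ast\,\bL\pi_{XY}^\ast$, the kernel of $(\Phi_Z)_z$ therefore satisfies
$$\bL j_z^\ast\rcplx{K}{Z}=\bL j_z^\ast\,\bL\pi_{XY}^\ast\cplx K\simeq\bL j_t^\ast\cplx K,$$
so that $(\Phi_Z)_z$ coincides with $\Phi_t$. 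Since $\Phi=\fmf{\cplx K}{X}{Y}$ is an equivalence by hypothesis, Proposition~\ref{prop:relativeequiv} applied to $\Phi$ over $T$ shows that $\Phi_t$ is an equivalence for every $t\in T$; hence $(\Phi_Z)_z$ is an equivalence for every $z\in Z$, and Proposition~\ref{prop:relativeequiv} applied to $\Phi_Z$ over $Z$ yields the result. I expect the only genuine obstacle to be the displayed kernel identification: one must set up the base-change cube carefully to see that restricting the pulled-back kernel $\pi_{XY}^\ast\cplx K$ to the fibre over $z$ returns $\bL j_t^\ast\cplx K$, after which everything reduces to a routine transfer of hypotheses through base change.
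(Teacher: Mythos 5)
Your proposal is correct and is exactly the argument the paper intends: the corollary is stated as a straightforward consequence of Proposition \ref{prop:relativeequiv}, obtained by applying that proposition to $\Phi_Z$ over the new base $Z$ after checking that the hypotheses transfer under base change and that the fibre functor $(\Phi_Z)_z$ is identified with $\Phi_t$ via $\bL j_z^\ast\,\bL\pi_{XY}^\ast\cplx{K}\simeq \bL j_t^\ast\cplx{K}$. The verification of finite homological dimension of $\rcplx{K}{Z}$ over both factors and the kernel identification are carried out just as the paper's surrounding discussion indicates.
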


We shall be often interested in studying cases in which an integral functor applied to a complex or a sheaf yields a concentrated complex.

\begin{defin}
Given a (relative) integral functor $\Phi=\fmf{\cplx K}{X}{Y}$, an object $\cplx E$ on $D^-(X)$ is said to be WIT$_i$-$\Phi$, if the $j$-th cohomology sheaf $\Phi^j(\cplx E)=0$ for all $j\neq i$. Equivalently $\cplx E$ is WIT$_i$-$\Phi$ if there is a sheaf $\widehat{\cplx E}$ on $Y$ such that $\Phi(\cplx E)\simeq\widehat{\cplx E}[-i]$.  In this case, the sheaf $\widehat{\cplx E}$ is called the Fourier-Mukai transform of $\cplx E$.
\end{defin}

\begin{prop}\label{prop:WITopen}
Let $X\to T$ be a flat morphism and $\cE$ be a sheaf on $X$ flat over $T$. 
\begin{enumerate}
\item The restriction $\cE_t$ to the fibre  $X_t$ is WIT$_i$-$\Phi_t$ for every $t\in T$ if and only if  $\cE$ is WIT$_i$-$\Phi$ and $\widehat\cE$ is flat over $T$. Moreover, restriction is compatible with base change, that is, $(\widehat\cE)_t\simeq\widehat{\cE_t}$ for every $t\in T$.
\item The set $U$ of points in $T$ such that the restriction $\cE_t$ of $\cE$ to the fibre $X_t$ is WIT$_i$-$\Phi_t$ has a natural structure of open (possibly empty) subscheme of $T$.
\end{enumerate}
\end{prop}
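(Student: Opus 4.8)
The plan is to reduce both statements to a single assertion about the complex $\cplx G:=\Phi(\cE)\in\cdbc Y$ and then to invoke cohomology and base change; throughout I work in the standing setting in which $p,q$ are proper and flat and $\cplx K$ has finite homological dimension over $X$, so that $\Phi=\fmf{\cplx K}{X}{Y}$ sends $\cdbc X$ to $\cdbc Y$. Since $\cE$ is flat over $T$ we have $\bL i_t^\ast\cE\simeq\cE_t$, so the base change isomorphism \eqref{e:basechange} reads $\Phi_t(\cE_t)\simeq\bL i_t^\ast\cplx G$. Thus ``$\cE_t$ is WIT$_i$-$\Phi_t$'' is exactly the condition that $\bL i_t^\ast\cplx G$ be concentrated in degree $i$, and the proposition becomes: the locus of $t$ where $\bL i_t^\ast\cplx G$ is concentrated in degree $i$ is open, and if it is all of $T$ then $\cplx G\simeq\widehat\cE[-i]$ with $\widehat\cE$ flat over $T$ and $(\widehat\cE)_t\simeq\widehat{\cE_t}$. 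I would first record that $\cplx G$ has finite homological dimension over $T$: the sheaf $\cE$ does, being $T$-flat, and $\Phi$ is $T$-linear and preserves $\cdbc{}$, so $\Phi(\cE)\lotimes\bL q^\ast\cplx N\simeq\Phi(\cE\lotimes\bL p^\ast\cplx N)$ is bounded for every $\cplx N\in\cdbc T$.

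For the easy implication in (1) I would argue directly: if $\cplx G\simeq\widehat\cE[-i]$ with $\widehat\cE$ flat over $T$, then $\bL i_t^\ast\cplx G\simeq(\bL i_t^\ast\widehat\cE)[-i]\simeq(\widehat\cE)_t[-i]$ is concentrated in degree $i$; comparing with $\Phi_t(\cE_t)$ shows that $\cE_t$ is WIT$_i$-$\Phi_t$ and simultaneously yields the base-change identification $\widehat{\cE_t}\simeq(\widehat\cE)_t$.

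The substance is the converse in (1). Assume $\bL i_t^\ast\cplx G$ is concentrated in degree $i$ for every $t$. First I would kill the cohomology of $\cplx G$ in degrees above $i$: if $m$ is the top degree with $\calH^m(\cplx G)\neq0$, then the top cohomology of a derived pullback is the ordinary pullback of the top cohomology, so $\calH^m(\bL i_t^\ast\cplx G)\simeq i_t^\ast\calH^m(\cplx G)$. If $m>i$ this must vanish for all $t$, hence $\calH^m(\cplx G)\otimes k(y)=0$ for every closed point $y\in Y$, and Nakayama forces $\calH^m(\cplx G)=0$, a contradiction; thus $\calH^j(\cplx G)=0$ for $j>i$. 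Now, using finite homological dimension over $T$ together with this upper bound, I would represent $\cplx G$ locally on $Y$ by a bounded complex $\cplx P$ of coherent sheaves flat over $T$ placed in degrees $\le i$. Because the terms of $\cplx P$ are $T$-flat, $\bL i_t^\ast\cplx G$ is computed by the ordinary restriction $\cplx P|_{Y_t}=\cplx P\otimes_{\cO_T}k(t)$, so the hypothesis says $\cplx P|_{Y_t}$ is exact in degrees $<i$ for all $t$. The local flatness criterion, the algebraic core of cohomology and base change (see \cite{BBH08}), then gives that $\cplx P$ is exact in degrees $<i$ and that $\calH^i(\cplx P)=\calH^i(\cplx G)$ is flat over $T$ with formation commuting with base change. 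This proves $\cE$ is WIT$_i$-$\Phi$ with flat transform $\widehat\cE=\calH^i(\cplx G)$.

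For the openness statement (2) I would globalize the last step by pushing down to $T$. Fixing a $q$-ample $\cO_Y(1)$, relative Serre vanishing provides, locally over $T$ and uniformly in $t$, an $n\gg0$ with $\bR q_\ast(\cplx P(n))=q_\ast(\cplx P(n))$ a complex of finite locally free $\cO_T$-modules whose fibre at $t$ is $H^0(Y_t,\cplx P|_{Y_t}(n))$; for such $n$ the twist kills the higher cohomology of each $\calH^j(\cplx P|_{Y_t})$ and detects its nonvanishing, so $\cplx P|_{Y_t}$ is concentrated in degree $i$ if and only if the complex of vector spaces $q_\ast(\cplx P(n))\otimes k(t)$ is. Hence $U$ is the locus where a bounded complex of finite locally free $\cO_T$-modules has one-term fibre cohomology, which is open by the classical upper semicontinuity of $t\mapsto\dim_{k(t)}H^j(\,\cdot\otimes k(t))$, and over $U$ the base-change theorem recovers the flat transform. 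I expect the main obstacle to be precisely this homological-algebra core: constructing the bounded $T$-flat representative $\cplx P$, which rests on the finite homological dimension of $\cplx G$ over $T$, and running the local flatness criterion, together with the uniform choice of twist needed to read off sheaf-level, rather than merely hypercohomological, concentration on the fibres.
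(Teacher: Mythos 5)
The paper offers no proof of its own here (it simply cites \cite[Corollary 6.3 and Proposition 6.4]{BBH08}), so the comparison is really with the standard cohomology-and-base-change argument. Your treatment of part (1) is that argument and is correct: the reduction to the concentration of $\bL i_t^\ast\Phi(\cE)$, the Nakayama step killing cohomology above degree $i$, the bounded $T$-flat representative $\cplx P$ (whose existence does rest, as you say, on the finite homological dimension of $\Phi(\cE)$ over $T$, which your $T$-linearity argument correctly supplies), and the descending application of the local flatness criterion all go through.

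Part (2), however, has a genuine gap exactly where you suspect it. Relative Serre vanishing and Grauert-type base change give you a single $n$ that works \emph{for the terms} $P^j$ of $\cplx P$, because those are $T$-flat; but the sheaves $\calH^j(\cplx P|_{Y_t})$ are not restrictions of a single $T$-flat family (that is precisely what is in question), so the phrase ``for such $n$ the twist kills the higher cohomology of each $\calH^j(\cplx P|_{Y_t})$ and detects its nonvanishing'' is unjustified: a twist chosen for the $P^j$ need not uniformly regularize the fibrewise cohomology sheaves, which are only subquotients of the $P^j|_{Y_t}$ and a priori form an unbounded family. One can repair this by a flattening-stratification/noetherian-induction argument on $T$ applied to the finitely many sheaves $\ker d^j$, $\operatorname{im}d^j$, $\operatorname{coker}d^j$, but that is a nontrivial extra step, not a corollary of what you have. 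The cleaner (and, in \cite{BBH08}, the actual) route avoids pushing forward to $T$ altogether: one shows that the locus of points $y\in Y$ at which $(\cplx P\otimes k(q(y)))_y$ is exact outside degree $i$ is \emph{open in $Y$} --- Nakayama handles the degrees above $i$ and the local flatness criterion handles those below, each step holding on a neighbourhood of $y$ --- and then $U=T\setminus q(Z)$ with $Z$ the closed complement, which is open because $q$ is proper. This gives openness and, over $U$, the flat transform simultaneously, with no uniform twist needed.
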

\begin{proof}
See \cite[Corollary 6.3 and Proposition 6.4]{BBH08} 
\end{proof}

By applying Proposition \ref{prop:WITopen} to the integral functor $\Phi_X\colon \cdbc{X\times_T X}\to \cdbc{X\times_T Y}$ and to the structure sheaf $\cO_\Delta$ of the relative diagonal in $X\times_T X$ and taking into account Equation \eqref{e:directimage}, one has the following fact which we shall need later  (see \cite[Proposition 6.5]{BBH08}).
\begin{cor}\label{prop:skyscraper}
Let $p\colon X\to T$ be a flat morphism. The set $U$ of points $x$ in $X$ such that the skyscraper sheaf $\cO_x$ is WIT$_i$-$\Phi$ has a natural structure of open subscheme of $X$.\qed
\end{cor}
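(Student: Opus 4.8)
The plan is to reduce the statement to Proposition~\ref{prop:WITopen}(2) by performing base change along $X$ itself. Taking $Z=X$ in the base change diagram that precedes Equation~\eqref{eq:basechangeFM}, I would form the relative integral functor $\Phi_X\colon\cdbc{X\times_T X}\to\cdbc{X\times_T Y}$ with kernel $\rcplx{K}{X}=\pi_{XY}^\ast\cplx K$, but now viewed as a relative transform over the \emph{new base} $X$, whose source fibration is the first projection $\pi_1\colon X\times_T X\to X$. Since $p\colon X\to T$ is flat, $\pi_1$ is flat, so the hypotheses of Proposition~\ref{prop:WITopen} are met with $X$ playing the role of the base; moreover $\rcplx{K}{X}$ has finite homological dimension over $X\times_T X$, as noted after Equation~\eqref{eq:basechangeFM}, so $\Phi_X$ is a genuine functor between the bounded derived categories. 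As the test sheaf I would take the structure sheaf $\cO_\Delta$ of the relative diagonal $\Delta\subset X\times_T X$.

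Next I would establish the three identifications needed to transport the openness statement back to the original functor. First, $\cO_\Delta$ is flat over $X$, because $\pi_1$ restricts to an isomorphism $\Delta\iso X$, so that $\cO_\Delta$ is the structure sheaf of a section of $\pi_1$. Second, for a point $x\in X$ with $t=p(x)$, the fibre of $\pi_1$ over $x$ is $X_t$, and since $\Delta$ meets $\{x\}\times_T X$ only at $(x,x)$, the restriction $(\cO_\Delta)_x$ is the skyscraper sheaf $\cO_x$ on $X_t$. Third, and this is the crux, the fibre functor $(\Phi_X)_x$ agrees with $\Phi_t$. This is a kernel computation: the fibre of $X\times_T X\times_T Y\to X$ over $x$ is $X_t\times Y_t$, and $\pi_{XY}$ restricts on this fibre to the closed immersion $j_t\colon X_t\times Y_t\hra X\times_T Y$ of the fibre of $\rho$ over $t$, so that $\rcplx{K}{X}$ restricts to $\bL j_t^\ast\cplx K$, which is precisely the kernel defining $\Phi_t$.

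Finally I would match the two notions of WIT$_i$ using Equation~\eqref{e:directimage}. Applied to the skyscraper $\cO_x$ on $X_t$ it gives $i_{t\ast}\Phi_t(\cO_x)\simeq\Phi(i_{t\ast}\cO_x)\simeq\Phi(\cO_x)$, where on the right $\cO_x$ denotes the skyscraper on $X$. Since $i_{t\ast}$ is exact and faithful, $\Phi_t(\cO_x)$ is concentrated in degree $i$ if and only if $\Phi(\cO_x)$ is; that is, $\cO_x$ is WIT$_i$-$\Phi_t$ on $X_t$ if and only if $\cO_x$ is WIT$_i$-$\Phi$ on $X$. Combining this with $(\Phi_X)_x=\Phi_t$ and $(\cO_\Delta)_x=\cO_x$, the locus $U$ is exactly the set of $x\in X$ for which $(\cO_\Delta)_x$ is WIT$_i$-$(\Phi_X)_x$, and Proposition~\ref{prop:WITopen}(2) endows this set with its natural structure of open subscheme of $X$.

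The only genuinely delicate step is the third identification, where one must follow the kernel $\pi_{XY}^\ast\cplx K$ through the triple fibre product and verify derived-functorially that its restriction to the fibre over $x$ is $\bL j_t^\ast\cplx K$; this is a base change statement of the same nature as Equation~\eqref{e:basechange}. The remaining points---flatness of $\cO_\Delta$, the identification of $(\cO_\Delta)_x$, and the translation through Equation~\eqref{e:directimage}---are routine once the setup is in place.
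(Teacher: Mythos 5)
Your proposal is correct and follows exactly the route the paper intends: the sentence preceding the corollary says precisely to apply Proposition~\ref{prop:WITopen} to the base-changed functor $\Phi_X$ and the diagonal sheaf $\cO_\Delta$, using Equation~\eqref{e:directimage} to translate between WIT$_i$-$\Phi_t$ and WIT$_i$-$\Phi$ (the paper then cites \cite[Proposition 6.5]{BBH08} rather than writing out the details). You have simply supplied the three identifications the paper leaves implicit, and all of them check out.
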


Recall that an object $\cplx G$ in $\cdbc{X}$ is said to be {\it perfect} if it is isomorphic to a bounded complex of locally free sheaves of finite rank.
\begin{lem}\label{lem:perfectFM}
Assume that $X\to T$ is a locally projective morphism. Let $\cplx{K}\in\cdbc{X\times_T Y}$ be of finite homological dimension over $X$  and  $\Phi=\fmf{\cplx{K}}{X}{Y}\colon \cdbc{X}\to \cdbc{Y}$ the corresponding integral functor. If the kernel $\cplx{K}$ is of finite homological dimension over $Y$, then $\fmf{\cplx{K}}{X}{Y}$ maps perfect objects of $\cdbc{X}$ to perfect objects of $\cdbc{Y}$. If the morphism $X\to T$ is projective, the converse is also true.
\end{lem}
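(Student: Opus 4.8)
The plan is to read ``perfect'' through the lens of the paper's own notion of finite homological dimension: over a noetherian scheme $Y$, an object of $\cdbc{Y}$ is perfect if and only if it has finite Tor-dimension, i.e. $\cplx{G}\lotimes\cplx{F}$ is bounded for every $\cplx{F}\in\cdbc{Y}$. In other words ``perfect'' is exactly ``of finite homological dimension over $Y$'' with $Y$ taken as its own base, and I will use this reformulation throughout.

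For the ``if'' part, suppose $\cplx{K}$ has finite homological dimension over $Y$ and let $\cplx{G}\in\cdbc{X}$ be perfect. Since $\cplx{K}$ has finite homological dimension over $X$, $\Phi(\cplx{G})\in\cdbc{Y}$, so it suffices to check that it has finite Tor-dimension. For any $\cplx{F}\in\cdbc{Y}$ the projection formula gives
\[ \Phi(\cplx{G})\lotimes\cplx{F}\simeq\bR\pi_{2\ast}\bigl(\bL\pi_1^\ast\cplx{G}\lotimes\cplx{K}\lotimes\bL\pi_2^\ast\cplx{F}\bigr). \]
Here $\bL\pi_1^\ast\cplx{G}$ is perfect (pullback of a perfect complex), $\cplx{K}\lotimes\bL\pi_2^\ast\cplx{F}$ is bounded by the finite homological dimension of $\cplx{K}$ over $Y$, hence so is their derived tensor product; finally $\bR\pi_{2\ast}$ preserves boundedness because $\pi_2$ is proper. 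Thus $\Phi(\cplx{G})\lotimes\cplx{F}$ is bounded for all $\cplx{F}$, so $\Phi(\cplx{G})$ is perfect. I stress that this direction uses only properness, not projectivity of $X\to T$.

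For the converse, assume $X\to T$ is projective and that $\Phi$ preserves perfect objects; I must show $\cplx{K}\lotimes\bL\pi_2^\ast\cplx{G}$ is bounded for each $\cplx{G}\in\cdbc{Y}$. Fix a relatively ample $\cO_X(1)$ and the induced closed $Y$-immersion $\iota\colon X\times_T Y\hookrightarrow\bbP:=\bbP^N\times Y$ over $Y$, so that $\pi_2=pr\circ\iota$ with $pr\colon\bbP\to Y$ and $\iota^\ast pr_1^\ast\cO_{\bbP^N}(p)=\pi_1^\ast\cO_X(p)$. For every $p$ the line bundle $\cO_X(p)$ is perfect, so by hypothesis
\[ \bR pr_\ast\bigl(\iota_\ast\cplx{K}\otimes pr_1^\ast\cO_{\bbP^N}(p)\bigr)\simeq\Phi(\cO_X(p)) \]
is perfect, in particular of finite homological dimension over $Y$. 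The decisive point is to recover the relative Tor-dimension of $\cplx{K}$ itself from these twisted pushforwards, and this is the only place projectivity enters.

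The main obstacle, then, is this reconstruction, for which I would invoke the \emph{relative} Beilinson resolution, i.e. the Koszul resolution of the diagonal in $\bbP\times_Y\bbP$ (the fibres here are genuine projective spaces, which is why I embed into $\bbP^N\times Y$ rather than work on $X\times_T Y$). It yields a spectral sequence converging to $\iota_\ast\cplx{K}$ whose $E_1$-page is assembled, for the finitely many twists $p\in\{-N,\dots,0\}$, from the locally free sheaves $pr_1^\ast\Omega^{-p}_{\bbP^N}(-p)$ tensored with $\bL pr^\ast$ of the perfect complexes $\bR pr_\ast(\iota_\ast\cplx{K}(p))\simeq\Phi(\cO_X(p))$. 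Each such $E_1$-term is perfect on $\bbP$, hence of finite homological dimension over $Y$; as this property is stable under shifts, cones and finite extensions, finiteness of the spectral sequence forces $\iota_\ast\cplx{K}$ to be of finite homological dimension over $Y$. Then for $\cplx{G}\in\cdbc{Y}$ the projection formula for the closed immersion gives $\iota_\ast(\cplx{K}\lotimes\bL\pi_2^\ast\cplx{G})\simeq\iota_\ast\cplx{K}\lotimes\bL pr^\ast\cplx{G}$, which is therefore bounded, so $\cplx{K}\lotimes\bL\pi_2^\ast\cplx{G}$ is bounded and $\cplx{K}$ has finite homological dimension over $Y$. As an alternative reduction I could instead pass to the fibres via \eqref{e:basechange} and apply the corresponding absolute statement on each projective fibre $X_t$, where the same reconstruction is the classical Beilinson spectral sequence on $\bbP^N$; the relative version above has the advantage of not requiring flatness hypotheses for the base change.
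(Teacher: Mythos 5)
Your argument is correct, but both halves run along genuinely different lines from the paper's. For the direct implication the paper characterises perfect objects by the boundedness of $\bR\dSHom{\cO_{Y}}(\Phi(\cplx{F}),\cplx{G})$ for all bounded $\cplx{G}$ (Lemma~1.2 of \cite{HLS07}) and then applies relative Grothendieck duality and Spaltenstein's adjunction to reduce to the boundedness of $\bR\dSHom{\cO_{X\times_TY}}(\cplx{K},\pi_2^!\cplx{G})$, which is where \cite[Proposition~2.3]{HLS08} and local projectivity enter; you instead characterise perfectness by finite Tor-amplitude and conclude from the bare projection formula together with properness of $\pi_2$, which is more elementary, avoids the duality machinery entirely and, as you observe, does not use projectivity in this direction. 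For the converse the paper simply quotes \cite[Proposition~2.3]{HLS08}, which asserts that for $X\to T$ projective the kernel $\cplx{K}$ has finite homological dimension over $Y$ as soon as all the twisted pushforwards $\bR\pi_{2\ast}(\cplx{K}(r))\simeq\fmf{\cplx{K}}{X}{Y}(\cO_X(r))$ are perfect; your relative Beilinson--Koszul resolution of the diagonal in $\bbP\times_Y\bbP$ is in substance a proof of that cited proposition (and in fact yields the stronger conclusion that $\iota_\ast\cplx{K}$ is perfect on $\bbP^N\times Y$). The paper's route buys brevity by outsourcing the reconstruction step; yours buys self-containedness and a marginally weaker hypothesis in the first half. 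Two points deserve a word of care, though neither is a real gap: the equivalence ``perfect $\Leftrightarrow$ finite Tor-amplitude'' for objects of $\cdbc{Y}$ requires $Y$ to have the resolution property in order to produce a global bounded complex of locally free sheaves in the sense the paper defines perfection (the paper's own criterion carries the same caveat), and the assembly of $\iota_\ast\cplx{K}$ from the Beilinson $E_1$-terms is a finite Postnikov convolution rather than a literal sequence of cones, which should be said explicitly even if it is routine.
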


\begin{proof} Suppose that $\cplx{K}$ is of finite homological dimension over $Y$ and take  $\cplx{F}$ a perfect object in $\cdbc{X}$. By \cite[Lemma 1.2]{HLS07}, we have to prove that $\bR\dSHom{\cO_{Y}} (\Phi(\cplx{F}), \cplx{G})$ is in $\dbc{Y}$ for
every $\cplx{G}\in\dbc{Y}$. Indeed, one has that 
$$\begin{aligned}
\bR\dSHom{\cO_{Y}} (\Phi(\cplx{F}), \cplx{G})& \simeq 
\bR\pi_{_2\ast}\bR\dSHom{\cO_{X\times_TY}} (\pi_1^\ast\cplx{F}\lotimes \cplx{K}, \pi_2^!\cplx{G})\simeq \\ &
\simeq \bR\pi_{_2\ast}\bR\dSHom{\cO_{X\times_TY}} (\pi_1^\ast\cplx{F}, \bR\dSHom{\cO_{X\times_TY}} (\cplx{K}, \pi_2^!\cplx{G}))\, ,
\end{aligned} $$ 
where the first isomorphism is relative Grothendieck  duality and  the second is  \cite[Theorem A]{Spal98}. Since $\cplx{K}$ is of finite homological dimension over $Y$ and $X\to T$ is locally projective,   by \cite[Proposition 2.3]{HLS08} $\bR\dSHom{\cO_{X\times_TY}} (\cplx{K},\pi_2^!\cplx{G})$ is a bounded complex. Thus one concludes from \cite[Lemma 1.2]{HLS07} since $\pi_1^\ast\cplx{F} \in \cdbc{X\times_TY}$ is a perfect object.

Suppose now that $X\to T$ is projective and let us prove the converse.  By \cite[Proposition 2.3]{HLS08}, we have to prove that $\bR\pi_{2\ast}(\cplx{K}(r))$ is a  perfect complex for any $r\in \mathbb{Z}$ where $\cplx{K}(r)=\cplx{K}\otimes \pi_1^{\ast}\cO_X(r)$. This is immediate because $\bR\pi_{2\ast}(\cplx{K}(r))\simeq \bR\pi_{2\ast}(\cplx{K}\otimes \pi_1^\ast \cO_X(r))=\fmf{\cplx{K}}{X}{Y}(\cO_X(r))$.
\end{proof}

\begin{cor} If  $X\to T$ is a projective morphism, any relative Fourier-Mukai transform $\Phi^{\cplx{K}}\in \FM_T(\cdbc{X})$  sends perfect objects to perfect objects. \qed
\end{cor}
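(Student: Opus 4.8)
The plan is to read this corollary as nothing more than the special case $Y=X$ of the first implication in Lemma \ref{lem:perfectFM}. A relative Fourier-Mukai transform $\Phi^{\cplx K}\in\FM_T(\cdbc{X})$ is by definition an equivalence of $\cdbc{X}$ whose kernel $\cplx K$ lies in $\cdbc{X\times_T X}$, so the only thing one needs to check before invoking the lemma is that $\cplx K$ is of finite homological dimension over \emph{each} of the two copies of $X$. Once that is in hand, the lemma applies verbatim with $Y=X$.

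So the first step is to verify this finiteness. Here I would appeal directly to the result recalled earlier in the excerpt: since $X\to T$ is projective, both projections $X\to T$ and $Y=X\to T$ are projective morphisms, and $\Phi^{\cplx K}$ is an equivalence of bounded derived categories, so by \cite[Proposition 2.10]{HLS08} its kernel $\cplx K\in\cdbc{X\times_T X}$ is automatically of finite homological dimension over both factors. This is precisely the assertion already noted in the first item of the Remark following the definition of $\FM_T(\cdbc{X})$, so no new work is required at this stage.

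With both finiteness conditions established, and with $X\to T$ projective and hence in particular locally projective, all the hypotheses of Lemma \ref{lem:perfectFM} are met for $\Phi=\fmf{\cplx K}{X}{X}$. Its first assertion then gives immediately that $\Phi^{\cplx K}$ sends perfect objects of $\cdbc{X}$ to perfect objects, which is the claim. I do not anticipate any genuine obstacle: the substantive analytic content, namely the relative Grothendieck-duality identification of $\bR\dSHom{\cO_X}(\Phi(\cplx F),\cplx G)$ and the resulting boundedness argument, was carried out once and for all in the proof of the lemma, and the corollary is simply its specialisation to the autoequivalence setting $Y=X$.
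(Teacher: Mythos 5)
Your proposal is correct and is exactly the argument the paper intends: the corollary is the case $Y=X$ of the first implication of Lemma \ref{lem:perfectFM}, with the finite homological dimension of $\cplx K$ over both factors supplied by \cite[Proposition 2.10]{HLS08} as already noted in the Remark after the definition of $\FM_T(\cdbc{X})$. Nothing further is needed.
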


Let $p_i\colon X_i\to T$ and $q_i\colon Y_i\to T$ be proper flat morphisms and let $\cplx{K_i}\in \cdbc{X_i\times_T\ Y_i})$  be two objects, $i=1,2$. The exterior tensor product $$\cplx{K_1}\boxtimes\cplx{K_2}=\pi_{X_1Y_1}^\ast \cplx{K_1}\lotimes \pi_{X_2Y_2}^\ast \cplx{K_2},$$ defines a relative integral functor
$\Phi^{\cplx{K_1}\boxtimes\cplx{K_2}}\colon D^-(X_1\times_T X_2)\to D^-(Y_1\times_T Y_2)$ denoted also by $\Phi^{\cplx{K_1}}\times \Phi^{\cplx{K_2}}$.

\begin{prop} \label{p:exterior}If  $p_i\colon X_i\to T$ and $q_i\colon Y_i\to T$ for $i=1,2$ are projective and $\Phi^{\cplx{K_i}}\colon \cdbc{X_i}\to\cdbc{Y_i}$ are equivalences, then the exterior tensor product defines  an equivalence $\Phi^{\cplx{K_1}\boxtimes\cplx{K_2}}\colon \cdbc{X_1\times_T X_2}\to\cdbc{Y_1\times_T Y_2}$.
\end{prop}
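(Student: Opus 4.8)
The plan is to apply Proposition~\ref{prop:relativeequiv}, which reduces the statement to a fibrewise assertion, and then to fall back on the corresponding classical fact for absolute Fourier--Mukai transforms. Write $\Phi=\Phi^{\cplx{K_1}\boxtimes\cplx{K_2}}$. First I would check the standing hypotheses of Proposition~\ref{prop:relativeequiv}. The morphisms $X_1\times_T X_2\to T$ and $Y_1\times_T Y_2\to T$ are again proper, flat and projective, being fibre products over $T$ of such morphisms, so in particular the source is locally projective. Since each $\Phi^{\cplx{K_i}}\colon\cdbc{X_i}\to\cdbc{Y_i}$ is an equivalence between projective fibrations, \cite[Proposition~2.10]{HLS08} shows that $\cplx{K_i}$ is of finite homological dimension over both $X_i$ and $Y_i$. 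Writing $\cplx{K_1}\boxtimes\cplx{K_2}=\bL\pi_{X_1Y_1}^\ast\cplx{K_1}\lotimes\bL\pi_{X_2Y_2}^\ast\cplx{K_2}$ and using that the two projections from the ambient fibre product onto $X_1\times_T Y_1$ and $X_2\times_T Y_2$ are flat, together with the stability of finite homological dimension under flat pullback and derived tensor product, one obtains that $\cplx{K_1}\boxtimes\cplx{K_2}$ is of finite homological dimension over both $X_1\times_T X_2$ and $Y_1\times_T Y_2$.

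By Proposition~\ref{prop:relativeequiv} it then suffices to prove that the fibre functor $\Phi_t$ is an equivalence for every closed point $t\in T$. The fibre over $t$ of the ambient product $(X_1\times_T X_2)\times_T(Y_1\times_T Y_2)$ is canonically the absolute product $\bigl((X_1)_t\times(Y_1)_t\bigr)\times\bigl((X_2)_t\times(Y_2)_t\bigr)$, and since the restriction $\bL j_t^\ast$ to this fibre commutes with derived pullbacks and with derived tensor products, functoriality of pullback identifies the restricted kernel $\bL j_t^\ast(\cplx{K_1}\boxtimes\cplx{K_2})$ with $(\cplx{K_1})_t\boxtimes(\cplx{K_2})_t$. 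Consequently $\Phi_t$ is exactly the absolute exterior tensor product $(\Phi^{\cplx{K_1}})_t\times(\Phi^{\cplx{K_2}})_t$ of the two fibre functors.

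Applying Proposition~\ref{prop:relativeequiv} to each $\Phi^{\cplx{K_i}}$ separately, the fibre functors $(\Phi^{\cplx{K_i}})_t$ are absolute Fourier--Mukai transforms for every $t$. It thus remains to recall the absolute statement that the exterior tensor product of two Fourier--Mukai transforms is again one. Since the inverse of a Fourier--Mukai transform is again an integral functor, there are kernels $\cplx{L_i}$ of quasi-inverses of $(\Phi^{\cplx{K_i}})_t$, so that $\cplx{L_i}\ast\cplx{K_i}$ and $\cplx{K_i}\ast\cplx{L_i}$ are isomorphic to the structure sheaf of the diagonal. The compatibility of convolution with exterior tensor product, $(\cplx{L_1}\boxtimes\cplx{L_2})\ast(\cplx{K_1}\boxtimes\cplx{K_2})\simeq(\cplx{L_1}\ast\cplx{K_1})\boxtimes(\cplx{L_2}\ast\cplx{K_2})$, together with the identification $\cO_{\Delta_1}\boxtimes\cO_{\Delta_2}\simeq\cO_\Delta$ of the diagonal of the product, shows that $\cplx{L_1}\boxtimes\cplx{L_2}$ is the kernel of a quasi-inverse of $(\Phi^{\cplx{K_1}})_t\times(\Phi^{\cplx{K_2}})_t=\Phi_t$. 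Hence every fibre functor is an equivalence and, by Proposition~\ref{prop:relativeequiv}, so is $\Phi$.

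I expect the only genuine technical point to be the verification in the first paragraph that the exterior product kernel has finite homological dimension over both factors; the fibre identification is pure functoriality of pullback, and the fibre reduction afforded by Proposition~\ref{prop:relativeequiv} together with the classical absolute exterior-product result then completes the argument.
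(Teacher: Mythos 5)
Your proof is correct and follows the same architecture as the paper's: verify that $\cplx{K_1}\boxtimes\cplx{K_2}$ has finite homological dimension over both $X_1\times_T X_2$ and $Y_1\times_T Y_2$, reduce to the fibres via Proposition~\ref{prop:relativeequiv}, identify the restricted kernel with $(\cplx{K_1})_t\boxtimes(\cplx{K_2})_t$, and invoke the absolute statement. The only divergence is in that last absolute step: the paper cites Orlov's Assertion 1.7 and observes that his proof carries over to the (possibly singular) fibres precisely because each kernel admits a right adjoint that is again an integral functor, by \cite[Proposition 2.9]{HLS08}; you instead give a direct convolution argument with inverse kernels $\cplx{L_i}$. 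Your route is more self-contained, but it quietly uses two facts worth making explicit. First, that a quasi-inverse of $(\Phi^{\cplx{K_i}})_t$ is realized by an integral functor at all -- on a singular fibre this is not Orlov's smooth-case result but exactly \cite[Proposition 2.9]{HLS08}, i.e.\ the same input the paper leans on. Second, the step from $\Phi^{\cplx{L_i}}\circ\Phi^{\cplx{K_i}}\simeq\Id$ to the kernel-level isomorphism $\cplx{L_i}\ast\cplx{K_i}\simeq\cO_{\Delta_i}$ is not automatic, since an isomorphism of integral functors does not formally identify their kernels; it does hold here because an integral functor isomorphic to the identity sends skyscrapers to skyscrapers, so a Proposition~\ref{p:trivial}-type argument pins the kernel down to $\cO_\Delta$. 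With those two points supplied, your argument closes, and it has the mild advantage of not requiring the reader to re-examine Orlov's proof; the paper's citation is shorter but defers the singular-fibre issue to the adjoint existence statement.
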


\begin{proof} Since we are in the projective case, $\cplx{K_1}$ has finite homological dimension over $X_1$.  Then $\pi_{X_1Y_1}^\ast (\cplx{K_1})$ has finite homological dimension over $X_1\times_T X_2\times_T Y_2$ and, since all projections are flat morphisms,  also  over $X_2\times_T Y_2$ and   $X_1\times_TX_2$. Being of finite homological dimension over $X_2\times_T Y_2$ implies that $\cplx{K_1}\boxtimes\cplx{K_2}$ is a bounded complex. To see that $\cplx{K_1}\boxtimes\cplx{K_2}$ has finite homological dimension over $X_1\times_TX_2$, we have to prove that  $(\cplx{K_1}\boxtimes\cplx{K_2})\lotimes\pi_{X_1X_2}^\ast \cplx{G}$ is bounded for any bounded complex $\cplx{G}\in \cdbc{X_1\times_TX_2}$. One has   $$(\cplx{K_1}\boxtimes\cplx{K_2})\lotimes\pi_{X_1X_2}^\ast \cplx{G}\simeq \pi_{X_1Y_1}^\ast \cplx{K_1}\lotimes \pi_{X_1X_2Y_2}^\ast(h^\ast\cplx{K_2}\lotimes r^\ast \cplx{G})\, ,$$ where $h\colon X_1\times_T X_2\times_T Y_2\to X_2\times_TY_2$ and $r\colon X_1\times_T X_2\times_T Y_2\to X_1\times_TX_2$ are the projections.  Since $\pi_{X_1Y_1}^\ast (\cplx{K_1})$ is of finite homological dimension over $X_1\times_T X_2\times_T Y_2$, it is enough to check that $\pi_{X_1X_2Y_2}^\ast(h^\ast\cplx{K_2}\lotimes r^\ast \cplx{G})$  is a bounded complex. This follows  from the fact that $\cplx{K_2}$ has finite homological dimension over $X_2$ and then $h^\ast \cplx{K_2}$ has finite homological dimension over $X_1\times_TX_2$. A similar argument proves that  $\cplx{K_1}\boxtimes\cplx{K_2}$  has finite homological dimension over $Y_1\times_TY_2$. 

Hence, to prove that $\Phi^{\cplx{K_1}\boxtimes\cplx{K_2}}$ is an equivalence, by Proposition \ref{prop:relativeequiv} it suffices to prove that for every $t\in T$, the induced functor $\Phi^{({\cplx{K_1}\boxtimes\cplx{K_2}})_t}$ between the derived categories of the fibres is an equivalence. By the base change formula we have that $\Phi^{({\cplx{K_1}\boxtimes\cplx{K_2}})_t}=\Phi^{{\cplx{K_1}_t\boxtimes\cplx{K_2}}_t}$ and then we are reduced to the absolute case. For smooth fibres this is Assertion 1.7 in \cite{Or02}. Moreover, the proof by Orlov is still valid for arbitrary fibres as long as the integral functor has a right adjoint functor which is again an integral functor. This is true because the kernels have finite homological dimension over both factors, see \cite[Proposition 2.9]{HLS08}.
\end{proof}

To finish this section let us state the following key results that will be used all along the paper.
\begin{prop} \label{prop: ncte} Let $\cplx{K}$ be an object in $\cdbc{X\times_ T Y}$. Suppose $X$ is connected and that for every closed point $t\in T$, the restriction $\bL j_t^\ast \cplx{K}\simeq \mathcal{K}_t[n_t]$ where $\cK_t$ is a sheaf on $X_t\times Y_t$ flat over $X_t$ and $n_t\in \mathbb{Z}$. Then, $\cplx{K}\simeq \mathcal{K}[n]$ for some  sheaf $\mathcal{K}$ on $X\times_T Y$ flat over $X$ and  some $n\in \mathbb{Z}$.
\end{prop}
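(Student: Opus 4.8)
The plan is to convert the hypothesis, which is stated for the fibres $W_t=X_t\times Y_t$ of $\rho\colon W:=X\times_T Y\to T$, into the same statement for the flat proper projection $\pi_1\colon W\to X$, and then to argue by connectedness of $X$. First I would reduce to $\pi_1$. Fix a closed point $x\in X$, set $t=p(x)$, and factor the fibre inclusion $j_x\colon\{x\}\times Y_t\hookrightarrow W$ as $j_x=j_t\circ k_x$, where $k_x\colon\{x\}\times Y_t\hookrightarrow X_t\times Y_t=W_t$ is the inclusion of the fibre of $(\pi_1)_t\colon W_t\to X_t$ over $x$. Then
\[
\bL j_x^\ast\cplx K\simeq\bL k_x^\ast\,\bL j_t^\ast\cplx K\simeq\bL k_x^\ast(\mathcal K_t[n_t]).
\]
Since $\mathcal K_t$ is flat over $X_t$ and $k_x$ is a fibre of $(\pi_1)_t$, the derived pullback is underived, so $\bL k_x^\ast\mathcal K_t=k_x^\ast\mathcal K_t=:\mathcal K_x$ is a sheaf. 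Hence for every closed $x\in X$ the object $\bL j_x^\ast\cplx K\simeq\mathcal K_x[n_t]$ is concentrated in the single degree $m_x:=n_{p(x)}$, while $\pi_1$ is flat and proper, being the base change of $q$ along $p$.

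Next I would prove that this shift is constant. Writing $\Phi=\fmf{\cplx K}{X}{Y}$, the projection formula yields $\Phi(\cO_x)\simeq i_{x\ast}\bL j_x^\ast\cplx K$, so $\cO_x$ is WIT$_i$-$\Phi$ exactly when $\bL j_x^\ast\cplx K$ is concentrated in degree $i$. By Corollary~\ref{prop:skyscraper} each set $U_i=\{x\in X:\bL j_x^\ast\cplx K\text{ is concentrated in degree }i\}$ is therefore open. By the first paragraph these opens cover $X$, and for $i\neq j$ one has $U_i\cap U_j=\{x:\bL j_x^\ast\cplx K=0\}$. Away from this last locus the cover is disjoint, so connectedness of $X$ singles out one integer $n$ with $\bL j_x^\ast\cplx K$ concentrated in degree $n$ for every $x$ (vacuously on the vanishing locus).

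I expect the control of the vanishing locus $Z:=\{x:\bL j_x^\ast\cplx K=0\}=X\setminus\pi_1(\supp\cplx K)$ to be the main obstacle, because a priori $Z$ is only open, and a disconnection of its complement could let $m_x$ jump between pieces, so that connectedness of $X$ alone would not pin down a single $n$. The resolution is that flatness rigidifies the situation: a nonzero sheaf $\mathcal K_t$ flat over $X_t$ has its associated points lying over the generic points of $X_t$, so its support surjects onto $X_t$, and hence $\bL j_x^\ast\cplx K$ cannot vanish on part of a fibre $X_t$ while being nonzero on another. Thus $Z$ is saturated for $p$, and the concentration degree descends to a genuine function of $t$ on $p(X\setminus Z)$; this is what reduces the constancy to connectedness of $X$, and turning the openness of $Z$ into exactly the input that connectedness can use (so that $Z=\emptyset$ once $\cplx K\neq0$, the case $\cplx K=0$ being trivial) is the step I would treat most carefully.

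Finally, with the shift constant equal to $n$, I would extract the sheaf and check flatness. Put $\mathcal K:=\calH^{\,n}(\cplx K)$ and $b:=\max\{i:\calH^i(\cplx K)\neq0\}$. Since the top cohomology commutes with derived restriction, $\calH^{\,b}(\bL j_x^\ast\cplx K)=j_x^\ast\calH^{\,b}(\cplx K)$ for all $x$; as $\bL j_x^\ast\cplx K$ lives only in degree $n$, this forces $b=n$ (were $b>n$, the right-hand side would vanish for all $x$, hence $\calH^b(\cplx K)=0$). Now apply $\bL j_x^\ast$ to the truncation triangle $\tau_{<n}\cplx K\to\cplx K\to\mathcal K[-n]\to$ and compare the long exact sequences in degree $n-1$: this gives $\calH^{n-1}(\bL j_x^\ast(\mathcal K[-n]))=\STor_1^{\cO_X}(\mathcal K,k(x))=0$ for every $x$, so the local criterion of flatness shows $\mathcal K$ is flat over $X$. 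Then $\bL j_x^\ast(\mathcal K[-n])\simeq\mathcal K_x[-n]$ is concentrated in degree $n$, the canonical map $\bL j_x^\ast\cplx K\to\bL j_x^\ast(\mathcal K[-n])$ is an isomorphism (it is the identity on $\calH^n$ and both sides are concentrated there), and therefore $\bL j_x^\ast\tau_{<n}\cplx K=0$ for all $x$. A complex whose derived restriction to every fibre vanishes is itself zero, so $\tau_{<n}\cplx K=0$ and $\cplx K\simeq\mathcal K[-n]$ is a single shift of the sheaf $\mathcal K$, flat over $X$, which is the asserted form.
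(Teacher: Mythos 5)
Your proof follows the same skeleton as the paper's: restrict along the fibres of $\pi_1$ using the $X_t$-flatness of $\cK_t$ to see that each $\cO_x$ is WIT, invoke Corollary~\ref{prop:skyscraper} plus connectedness of $X$ to make the shift constant, and then upgrade the pointwise concentration to the statement that $\cplx{K}$ itself is a shifted $X$-flat sheaf. The two places where you diverge are both local improvements rather than a different route. First, for the last step the paper simply cites \cite[Lemma 4.3]{Bri99}, whereas you reprove it inline (top cohomology commutes with $j_x^\ast$, truncation triangle, $\STor_1=0$ and the local criterion of flatness, then derived Nakayama to kill $\tau_{<n}\cplx{K}$); your argument is correct and self-contained, at the cost of length. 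Second, you are right that the connectedness argument as usually stated only works when the open sets $U_i$ are pairwise disjoint, i.e.\ when $\bL j_x^\ast\cplx{K}\neq 0$ for all $x$; the paper's one-line appeal to Corollary~\ref{prop:skyscraper} silently assumes this. In every application of Proposition~\ref{prop: ncte} in the paper the fibre functors $\Phi_t$ are equivalences, so each $\cK_t$ is nonzero with full support and the vanishing locus is empty, which is why the issue never surfaces. Be aware, though, that your proposed repair is not yet complete: knowing that $Z$ is open and a union of fibres of $p$ does not by itself make $Z$ closed, so connectedness of $X$ alone does not force $Z=\emptyset$; one needs an extra argument (for instance, that the top cohomology sheaf $\calH^b(\cplx{K})$ would otherwise be non-flat over $T$ along the boundary of $Z$, producing a surviving $\STor_1$ in degree $b-1$ of some $\bL j_t^\ast\cplx{K}$ and contradicting the concentration hypothesis). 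Since you explicitly flag this as the step to treat most carefully, and the paper's own proof has the identical implicit assumption, I would not count it against you, but it is the one point where your write-up is not yet a complete argument.
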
 

\begin{proof} Let $\Phi=\fmf{\cplx{K}}{X}{Y}$ be the relative integral functor of kernel $\cplx{K}$. If $x\in X$ is a closed point lying over $t\in T$ we have, following the above notations, that $ \Phi(\mathcal{O}_x)\simeq {i_x}_\ast\bL j_x^\ast\cplx K\simeq {i_x}_\ast (\bL j_{x,t}^\ast \bL j_t^\ast \cplx{K})$ where $j_{x,t}\colon \{x\}\times Y_t\hookrightarrow X_t\times Y_t$ is the inclusion of the fibre of $X_t\times Y_t\to X_t$ on $x\in X_t$ . Since $\cK_t$ is a sheaf on $X_t\times Y_t$ flat over $X_t$, $ \Phi(\mathcal{O}_x)\simeq  {i_x}_\ast ( j_{x,t}^\ast \cK_t[n_t])$. In other words, the skyscraper sheaf $\cO_x$ is WIT$_{-n_t}$-$\Phi$, with $p(x)=t$. Since $X$ is connected, Corollary  \ref{prop:skyscraper} implies that the integer numbers $n_t$ are the same for all $t\in T$,  say $n$. Therefore the restriction $\bL j_x^\ast\cplx K[-n]$ is a concentrated complex $\cK_x$ for every point $x\in X$.  By \cite[Lemma 4.3]{Bri99} $\cplx K[-n]$ is concentrated as well and is a sheaf $\cK$ on $X\times _T Y$ flat over $X$.
\end{proof}

In the following sections we apply this result to study relative Fourier-Mukai transforms of integral elliptic fibrations, abelian schemes and Fano and anti-Fano fibrations. In the case of integral elliptic fibrations, we prove the flatness condition on the sheaves $\mathcal{K}_t$ by showing that they are shifted universal sheaves, in the second case we prove it  by using certain results on semihomogeneous sheaves  and it is immediate for Fano and anti-Fano fibrations.

We finish this section with the following result which characterizes trivial autoequivalences and whose proof is similar to the absolute case one, see for instance \cite[Corollary 1.12]{BBH08}.

\begin{prop}\label{p:trivial} Let $\cplx{K}$ be an object in $\cdbc{X\times_ T Y}$ of finite homological dimension over $X$ and let $\fmf{\cplx{K}}{X}{Y}$ be the corresponding integral functor. Assume that for every closed point $x\in X$ one has $\fmf{\cplx{K}}{X}{Y}(\mathcal{O}_x)\simeq \mathcal{O}_y$ for some closed point $y\in Y$. Then, there exists a  relative morphism $f\colon X\to Y$ and a line bundle $\mathcal{L}$ on $X$ such that $\fmf{\cplx{K}}{X}{Y}(\cplx{E})\simeq \bR f_\ast(\mathcal{L}\otimes\cplx{E})$ for any object $\cplx{E}$ in $\cdbc{X}$.
\end{prop}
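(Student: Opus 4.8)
The plan is to first reduce the kernel to an honest sheaf, then recognise it as the structure sheaf of a graph twisted by a line bundle, and finally read off the functor by the projection formula.

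\emph{Step 1 (the kernel is a sheaf flat over $X$).} Following the notation and the computation in the proof of Proposition~\ref{prop: ncte}, for every closed point $x\in X$ lying over $t=p(x)$ one has $\fmf{\cplx{K}}{X}{Y}(\cO_x)\simeq i_{x\ast}\bL j_x^\ast\cplx{K}$. Since $i_x$ is a closed immersion and, by hypothesis, $\fmf{\cplx{K}}{X}{Y}(\cO_x)\simeq\cO_y$ is a skyscraper sheaf concentrated in degree $0$, the derived restriction $\bL j_x^\ast\cplx{K}$ is the skyscraper sheaf $\cO_{(x,y)}$ on the fibre $\{x\}\times Y_t=\pi_1^{-1}(x)$; in particular it is concentrated in degree $0$ for \emph{every} closed point $x\in X$. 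By \cite[Lemma 4.3]{Bri99} applied to $\pi_1\colon X\times_T Y\to X$ this forces $\cplx{K}$ to be a single sheaf $\mathcal{K}$ on $X\times_T Y$ which is flat over $X$, with $\mathcal{K}_x\simeq\cO_{(x,y)}$. (As all points are WIT$_0$ here, the integer $n$ of Proposition~\ref{prop: ncte} is forced to be $0$ and one does not even need $X$ connected; one may instead invoke that proposition directly with $n_t\equiv 0$.) Set $f(x):=y$, defining for the moment a set-theoretic map $f\colon X\to Y$ over $T$.

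\emph{Step 2 (identifying $\mathcal{K}$).} Now $\mathcal{K}$ is flat over $X$ and each fibre $\mathcal{K}_x$ is the structure sheaf $\cO_{(x,f(x))}$ of a single reduced point, of length one; thus $\mathcal{K}$ is a flat family over $X$ of length-one subschemes of the fibres of $q$. By cohomology and base change $\mathcal{L}:=\pi_{1\ast}\mathcal{K}$ is a line bundle on $X$ and $\bR^{>0}\pi_{1\ast}\mathcal{K}=0$ (the fibre cohomology $H^{\bullet}(Y_t,\cO_{f(x)})$ is $k$ in degree $0$ and vanishes otherwise, of constant rank one). The counit $\bL\pi_1^\ast\mathcal{L}\to\mathcal{K}$ is fibrewise the evaluation onto the skyscraper, hence surjective by Nakayama, so after twisting by $\mathcal{L}^{-1}$ one identifies $\mathcal{K}\simeq\cO_\Gamma\otimes\pi_1^\ast\mathcal{L}$, where $\Gamma\subset X\times_T Y$ is the closed subscheme cut out by $\mathrm{Ann}(\mathcal{K})$ (equivalently by the $0$-th Fitting ideal). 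Properness of $\pi_1$ makes $\pi_1|_\Gamma$ finite, and $\pi_{1\ast}\cO_\Gamma=\cO_X$ together with flatness shows it is finite flat of degree one, hence an isomorphism $X\isom\Gamma$. Writing $\delta=(\Id,f)\colon X\isom\Gamma\hookrightarrow X\times_T Y$ for its inverse (the graph morphism), $f:=\pi_2\circ\delta\colon X\to Y$ is now a genuine $T$-morphism, and $\mathcal{K}\simeq\delta_\ast\mathcal{L}$.

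\emph{Step 3 (the functor).} With $\mathcal{K}\simeq\delta_\ast\mathcal{L}$, the projection formula for the closed immersion $\delta$ gives, for any $\cplx{E}\in\cdbc{X}$,
\[
\fmf{\cplx{K}}{X}{Y}(\cplx{E})=\bR\pi_{2\ast}(\bL\pi_1^\ast\cplx{E}\lotimes\delta_\ast\mathcal{L})\simeq\bR\pi_{2\ast}\,\delta_\ast(\bL\delta^\ast\bL\pi_1^\ast\cplx{E}\otimes\mathcal{L})\simeq\bR f_\ast(\cplx{E}\otimes\mathcal{L}),
\]
using $\pi_1\circ\delta=\Id_X$ (so $\bL\delta^\ast\bL\pi_1^\ast=\Id$) and $\pi_2\circ\delta=f$. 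This is the asserted description.

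The technical heart of the argument is Step~2: promoting the pointwise assignment $x\mapsto f(x)$ to a bona fide $T$-morphism and proving that $\pi_1|_\Gamma$ is an \emph{isomorphism} of schemes rather than merely a bijection on points. This is exactly where the flatness of $\mathcal{K}$ over $X$ from Step~1 and the constancy of the fibre length are indispensable; concretely they guarantee that $\Gamma\to X$ is finite flat of degree one, equivalently that the flat family $\mathcal{K}$ is classified by a morphism $X\to\mathrm{Hilb}^1(Y/T)=Y$ over $T$.
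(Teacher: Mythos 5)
Your proof is correct and follows essentially the same route as the paper, which simply defers to the standard absolute-case argument (\cite[Corollary 1.12]{BBH08}): reduce the kernel to a sheaf flat over $X$ via the fibrewise criterion of \cite[Lemma 4.3]{Bri99}, identify it as $\delta_\ast\mathcal{L}$ for the graph embedding of a $T$-morphism $f$, and conclude by the projection formula. Your write-up is a faithful expansion of that cited proof in the relative setting, with the flat-family/Hilbert-scheme justification of Step 2 being exactly the point the reference handles.
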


\subsection{Fourier-Mukai transforms acting on the Grothendieck group.}\label{subsec:Grothendieck}

\quad
  
\quad

Let $X$ be  a proper scheme over $\Bbbk$,. Let $K(\cdbc{X})$ be the Grothendieck group of the triangulated category $\cdbc{X}$, that is,  the quotient of the free Abelian group generated by the objects of $\cdbc{X}$ modulo expressions coming from distinguished triangles.

It is known  (\cite{Gro77}) that $K(\cdbc{X})\simeq K
(Coh(X))$, and this group is usually denoted by $K_\bullet(X)$.

Given two object s $\cplx{E}, \cplx{F}$ of $\cdbc{X}$ such that   either $\cplx{E}$ or $\cplx{F}$ is perfect, the Euler characteristic is defined as the integer number given by
$$\chi(\cplx E,\cplx F)=\sum_i (-1)^i \dim \Hom_{\cdbc{X}}(\cplx E,\cplx{F}[i])\,.$$

To pass from $\cdbc{X}$ to $K_\bullet(X)$ one considers the map $[\quad]\colon \cdbc{X}\to K_\bullet(X)$ given by $\cplx{F}\mapsto [\cplx F]=\sum(-1)^i[\mathcal{H}^i(\cplx F)]$. Assume now that $K_\bullet(X)$ is generated by perfect objects. 
Then one has the Euler form  $$e_X\colon K_\bullet(X)\times K_\bullet(X)\to \bbZ\, ,$$  defined by $e_X([\cplx E ], [\cplx F])=\chi (\cplx E,\cplx F)$ if either $\cplx{E}$ or $\cplx{F}$ is perfect and extended by linearity. If, in addition, $X$ is Gorenstein with trivial dualizing sheaf, then  by Serre duality $e_X$ is a skew symmetric or symmetric form. Let us denote by $rad\, e_X$ the radical of $e_X$. Then $e_X$ induces a non-degenerate form on $\widetilde{K}_\bullet(X)=K_\bullet(X)/rad\, e_X$ which we denote by $\tilde{e}_X$. 

Any integral functor $\Phi=\fmf{\cplx K}{X}{X}\colon \cdbc{X}\to \cdbc{X}$ induces a group morphism $\phi\colon K_\bullet(X)\to K_\bullet(X)$ that commutes with the projections $[\quad]\colon \cdbc{X}\to K_\bullet(X)$, that is, such that the following diagram commutes 
$$
\xymatrix{
\cdbc{X}\ar[r]^\Phi\ar[d]_{[\quad]} &\cdbc{X}\ar[d] ^{[\quad]} \\
K_\bullet(X)\ar[r]^{\phi}& K_\bullet(X).}
$$
This morphism is given by $\phi(\alpha)={\pi_2}_!\big(\pi_1^\ast\alpha\otimes [\cplx K]\big)$. Note that $\phi$ is well defined because we are assuming that $K_\bullet(X)$ is generated by perfect objects. Moreover if $\Phi$ is an equivalence then $\phi$ is an isomorphism.

Hence the group of Fourier-Mukai transforms $\FM(\cdbc{X})$ acts on $K_\bullet(X)$ by automorphisms. When $X$ is projective, since any Fourier-Mukai transform maps perfect objects to perfect objects (Lemma \ref{lem:perfectFM}) and for any Fourier-Mukai transform $\Phi$ one has $\chi(\cplx E,\cplx F)=\chi(\Phi(\cplx E),\Phi(\cplx F))$,  this action preserves the Euler form $e_X$ and  $rad\, e_X$. Therefore any Fourier-Mukai transform $ \Phi\colon \cdbc{X}\isom \cdbc{X}$ defines an isomorphism on $\widetilde K_\bullet(X)$ which preserves the non-degenerate  form $\tilde e_X$.
\section{Relative Fourier-Mukai transforms for Weiertra\ss\ fibrations}
In this section we study  the group of relative Fourier-Mukai transforms of  the derived category  of any relatively integral elliptic fibration over a connected base scheme. For the convenience of the reader, we start by briefly recalling the structure of the group of derived equivalences of the fibres of these fibrations.
\subsection{Derived equivalences for integral curves of genus one}

\quad

\quad

Let $C$ be a connected integral projective curve of arithmetic genus one. Then $C$ is either a smooth elliptic curve, or a singular rational curve with a simple node or cusp. 

Let $K_\bullet(C)$ be the Grothendieck group of $C$. We distinguish two cases:
\begin{enumerate}
\item
If $C$ is smooth, then $\big(\rk,\det\big)\colon K_\bullet(C)\isom \bbZ\oplus \Pic(C)$ (see for instance \cite[Chapter II Ex 6.11]{Hart77}).
\item If $C$ is singular, then $\big(\rk,\deg\big )\colon K_\bullet(C)\isom \bbZ\oplus\bbZ$ with generators $[\cO_x]$ and $[\cO_C]$, being $x\in X$  a smooth point, which are both perfect objects (\cite[Lemma 3.1]{BuKr05}). 
\end{enumerate}

In both cases,  the Euler form $e_C\colon K_\bullet(C)\times K_\bullet(C)\to \bbZ$ is well defined and one has $(\rk, \deg)\colon \widetilde K_\bullet(C)\isom \bbZ\oplus\bbZ\,$ with generators $[\cO_C]$  and $[\cO_x]$ for any smooth point $x\in C$.

By Subsection \ref{subsec:Grothendieck} any Fourier-Mukai transform $\Phi=\fmf{\cplx K}{C}{C}\colon \cdbc{C}\to \cdbc{C}$ induces an isomorphism on $\widetilde K_\bullet(C)\simeq\bbZ\oplus\bbZ$ which preserves the symplectic form $\tilde e_C$ on $\widetilde K_\bullet(C)$. 
So there exists a group morphism $\Aut\cdbc{C}\xrightarrow{\ch} \SL_2(\bbZ)$ such that if $\cplx F$ is an object in $\cdbc{C}$, then 
$$\begin{pmatrix}
\rk(\Phi(\cplx F))\\
\deg(\Phi(\cplx F))
\end{pmatrix}=\ch(\Phi)
\begin{pmatrix}
\rk(\cplx F)\\
\deg( \cplx{F})
\end{pmatrix}.
$$
Note that in order to determine the matrix $\ch(\Phi)$ it is sufficient to compute the rank and degree of $\Phi(\cO_C)$ and $\Phi(\cO_x)$, with  $x$ a non-singular point of $C$.  

\begin{exe}\label{exe:generators}
\begin{enumerate}

\item Let $f\colon C\to C$ be an automorphism of $C$, $\calL$ a line bundle on $C$ and $n$ an integer number. Denote by $\Phi_0$ the integral functor of kernel ${\Gamma_f}_\ast\calL[n]$, being $\Gamma_f\colon C \hra C\times C$ the graph of $f$. Then  $\Phi_0$ is an equivalence isomorphic to $f_\ast(-\otimes\calL)[n]$ and $\ch(\Phi_0)=(-1)^n\begin{pmatrix}1 & 0 \\ \deg\calL & 1\end{pmatrix}$.

\item Let $\delta\colon C \hra C\times C$ be the diagonal immersion of $C$ and $\cI_\Delta$ its ideal sheaf. Denote by $\Phi_1$ the integral functor of kernel $\cI_\Delta$. Then $\Phi_1$ is a Fourier-Mukai transform and $\ch(\Phi_1)=\begin{pmatrix} -1 & 1 \\ 0 & -1\end{pmatrix}$ (see for instance \cite[Propositions 1.3 and 1.9]{HLST09}). 

\item Let $x_0$ be a non-singular point of $C$ and $\Phi_2=-\otimes_{\cO_C}\cO_C(x_0)$. Then $\Phi_2$ is a Fourier-Mukai transform and $\ch(\Phi_2)=\begin{pmatrix} 1 & 0 \\ 1 & 1\end{pmatrix}$.
\end{enumerate}
\end{exe}

The existence of the Fourier-Mukai transforms $\Phi_1$ and $\Phi_2$ in  Example \ref{exe:generators} allows one to prove that the morphism  $\Aut\cdbc{C}\xrightarrow{\ch} \SL_2(\bbZ)$ is surjective because they are mapped to a pair of generators for $\SL_2(\bbZ)$. 

The following well known result gives a description of the group of Fourier-Mukai transforms of $\cdbc{C}$.  If  $C$ is a smooth elliptic curve, a proof can be found in \cite{HiVdB}. For a rational curve with a node or a cusp this was proved in \cite{BuKr05}.
\begin{thm}\label{thm:sequencefibrewise}
Let $C$ be a connected integral projective curve with arithmetic genus one.   The following exact sequence holds
\begin{equation}\label{eq:sequencefibrewise}
1\lra \Aut^0\cdbc{C}\lra \Aut\cdbc{C}\stackrel{\ch}\lra \SL_2(\bbZ)\lra 1\, ,
\end{equation}
where  $\Aut^0\cdbc{C}=\Aut (C)\rtimes (2\bbZ \times \Pic^0(C))$ is the subgroup of $\Aut \cdbc{C}$ consisting of autoequivalences of the form $f_\ast(-\otimes_{\mathcal{O}_C}\mathcal{L})[n]$, with $\mathcal{L}\in \Pic^0(C)$, $f\in \Aut(C)$ and $n\in 2\bbZ$.
\end{thm}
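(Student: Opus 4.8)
The homomorphism $\ch\colon\Aut\cdbc{C}\to\SL_2(\bbZ)$ and the fact that it preserves the skew form on $\widetilde K_\bullet(C)$ are already in place, so the plan is to verify the two exactness assertions. Surjectivity is immediate from Example \ref{exe:generators}: the transforms $\Phi_1$ and $\Phi_2$ satisfy $\ch(\Phi_1)=\begin{pmatrix}-1&1\\0&1\end{pmatrix}$ and $\ch(\Phi_2)=\begin{pmatrix}1&0\\1&1\end{pmatrix}$, and these generate $\SL_2(\bbZ)$. The inclusion $\Aut^0\cdbc{C}\subseteq\ker\ch$ is equally direct: Example \ref{exe:generators}(1) gives $\ch\big(f_\ast(-\otimes\calL)[n]\big)=(-1)^n\begin{pmatrix}1&0\\\deg\calL&1\end{pmatrix}$, which is the identity exactly when $n$ is even and $\deg\calL=0$. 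That $\Aut^0\cdbc{C}$ is a subgroup with the stated semidirect-product structure follows from the composition rule $f_\ast(-\otimes\calL)[n]\circ g_\ast(-\otimes\cM)[m]\simeq(f\circ g)_\ast\big(-\otimes(\cM\otimes g^\ast\calL)\big)[n+m]$, a routine application of the projection formula, with $\Aut(C)$ acting on $\Pic^0(C)$ by pullback.

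The heart of the matter is the reverse inclusion $\ker\ch\subseteq\Aut^0\cdbc{C}$, and the plan is to reduce it to Proposition \ref{p:trivial} by controlling the image of a skyscraper sheaf. Fix $\Phi\in\ker\ch$ and a smooth point $x\in C$. Since $x$ is regular, $\cO_x$ is a \emph{point object}: $\Hom(\cO_x,\cO_x[i])=k$ for $i=0,1$ and vanishes otherwise, the case $i=1$ coming from Serre duality, as $C$ is Gorenstein with trivial dualizing sheaf and hence has Serre functor $[1]$. An equivalence preserves all graded $\Hom$-spaces, so $\cplx E:=\Phi(\cO_x)$ is again a point object, and $\ch(\Phi)=\Id$ forces $(\rk,\deg)(\cplx E)=(0,1)$ in $\widetilde K_\bullet(C)$. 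I claim $\cplx E\simeq\cO_y[m]$ for a smooth point $y$ and some $m\in\bbZ$. When $C$ is smooth this is clean: $\operatorname{Coh}(C)$ is hereditary, so $\cplx E$ splits as $\bigoplus_i\calH^i(\cplx E)[-i]$, and the condition $\dim\Hom(\cplx E,\cplx E)=1$ then forces a single nonzero cohomology sheaf, which must be simple, and a simple sheaf of rank $0$ and degree $1$ is a skyscraper. For the nodal or cuspidal curve $\operatorname{Coh}(C)$ is no longer hereditary, the splitting fails, and this identification is genuinely delicate — this is where I expect the principal obstacle to lie. One must invoke the finer description of point-like objects in $\cdbc{C}$ due to Burban--Kreussler \cite{BuKr05}; here the rank--degree constraint is exactly what rules out the sheaf $\cO_{x_0}$ at the singular point $x_0$, whose self-extension space is two-dimensional so that it is not a point object.

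To conclude I would globalise the shift and then apply Proposition \ref{p:trivial}. The integer $m=m(x)$ is determined by the WIT class of $\cO_x$ (namely $\cO_x$ is WIT$_{-m}$-$\Phi$), and by Corollary \ref{prop:skyscraper} the set of points $x$ for which $\cO_x$ is WIT$_i$-$\Phi$ is open for each $i$; as $C$ is irreducible these loci partition $C$ and $m$ is constant. Since $[\Phi(\cO_x)]=(-1)^m(0,1)$ must equal $(0,1)$, the shift $m$ is even. Replacing $\Phi$ by $\Phi[-m]$, which does not affect membership in $\Aut^0\cdbc{C}$, we obtain an equivalence sending every skyscraper to a skyscraper, so Proposition \ref{p:trivial} provides a morphism $f\colon C\to C$ and a line bundle $\calL$ with $\Phi[-m]\simeq\bR f_\ast(\calL\otimes-)$; being an equivalence, $f$ is an automorphism and $\bR f_\ast=f_\ast$, whence $\Phi\simeq f_\ast(-\otimes\calL)[m]$. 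Finally $\ch(\Phi)=\Id$ with $m$ even forces $\deg\calL=0$ by the formula of the first paragraph, so $\Phi\in\Aut^0\cdbc{C}$. One caveat: Proposition \ref{p:trivial} requires \emph{every} skyscraper, including $\cO_{x_0}$ at the singular point, to be sent to a skyscraper, and verifying this at $x_0$ is again part of the subtle singular analysis; for that reason the theorem is most safely invoked from \cite{HiVdB} in the smooth case and \cite{BuKr05} in the singular one.
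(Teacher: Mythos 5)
The paper does not actually prove this theorem: it states it as a known result and cites \cite{HiVdB} for the smooth case and \cite{BuKr05} for the nodal/cuspidal case, so there is no internal proof to compare yours against. Your outline is the standard argument, and it is in fact the same strategy the paper deploys later for the relative version (Theorem \ref{t:FMW}): surjectivity from the two generators, and identification of $\ker\ch$ by showing skyscrapers go to shifted skyscrapers and invoking Proposition \ref{p:trivial}. The smooth case as you present it is complete. But you should be clear that the two steps you defer in the singular case are genuine gaps in your write-up, not cosmetic ones, and they are precisely the content of \cite{BuKr05}: (a) for nodal/cuspidal $C$ the category $\operatorname{Coh}(C)$ is not hereditary, so a point object of class $(0,1)$ need not split into its cohomology sheaves, and identifying $\Phi(\cO_x)$ with a shifted skyscraper requires the classification of spherical/point-like objects on such curves; and (b) Proposition \ref{p:trivial} needs $\Phi(\cO_{x_0})\simeq\cO_y$ at the singular point $x_0$ as well, and your openness argument via Corollary \ref{prop:skyscraper} only controls the smooth locus --- the WIT loci need not cover $C$ a priori, so constancy of the shift on $C\setminus\{x_0\}$ says nothing about $x_0$. (The paper's own later device for this --- realising the kernel as a twist of a universal sheaf on $\cM_C(0,1)\simeq C$, Proposition \ref{prop:flatkernel} --- cannot be used here without circularity, since its proof quotes the theorem.) One small correction: the matrix $\ch(\Phi_1)=\begin{pmatrix}-1&1\\0&1\end{pmatrix}$ you copied from Example \ref{exe:generators} has determinant $-1$; the correct value, which the paper itself uses in the proof of Theorem \ref{t:FMW}, is $\begin{pmatrix}-1&1\\0&-1\end{pmatrix}$, and with that correction your pair of matrices does generate $\SL_2(\bbZ)$.
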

\qed

\begin{rema}\label{rema:WITsemistable}
Since $\Aut\cdbc{C}\stackrel{\ch}\lra \SL_2(\bbZ)$ is surjective one has that the group of Fourier-Mukai transforms $\Aut\cdbc{C}$, modulo $\Aut^0\cdbc{C}$, is generated by $\Phi_1$ and $\Phi_2$, in the notation of Example \ref{exe:generators}. By \cite[Proposition 1.13 and Theorem 1.20]{HLST09} any semistable sheaf on $C$ is WIT with respect to both transforms and both transforms preserve the semistability condition. On the other hand it is clear that any element of $\Aut^0\cdbc{C}$ maps sheaves into sheaves and preserves semistability. Therefore any semistable sheaf is  WIT-$\Phi$, with respect to any $\Phi\in \Aut\cdbc{C}$ .
\end{rema}

The following result is possibly known, but we include a proof for completeness since we have not been able to provide a reference for it.
\begin{prop}\label{prop:flatkernel} Let $C$ be a  connected integral projective curve of arithmetic genus one. 
If $\Phi^\cplx{K}\in \Aut \cdbc{C}$, then $\cplx{K}$ is isomorphic, up to shift, to a sheaf $\cK$ on $C\times C$, flat over both factors.
\end{prop}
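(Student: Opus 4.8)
The plan is to reduce the statement to the behaviour of $\Phi^{\cplx K}$ on skyscraper sheaves and then to run the same concentration-and-flatness argument used in the proof of Proposition \ref{prop: ncte}, first for the projection onto the first factor and then, via the transpose functor, for the projection onto the second.

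First I would record that for every closed point $x\in C$ the derived restriction of the kernel to the fibre of the first projection computes $\Phi^{\cplx K}$ on the corresponding skyscraper: in the notation of the proof of Proposition \ref{prop: ncte}, $\Phi^{\cplx K}(\cO_x)\simeq {i_x}_\ast\bL j_x^\ast\cplx K$. Now $\cO_x$ is a simple sheaf, hence stable and in particular semistable, so by Remark \ref{rema:WITsemistable} it is WIT$_{i(x)}$ with respect to $\Phi^{\cplx K}$; equivalently $\bL j_x^\ast\cplx K$ is concentrated in the single degree $i(x)$. By Corollary \ref{prop:skyscraper} the loci on which $\cO_x$ is WIT$_i$ are open and disjoint, and since $C$ is integral, hence connected, they force a single index $i$ independent of $x$. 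Thus $\bL j_x^\ast\cplx K\simeq\cK_x[-i]$ for all $x$ with $i$ fixed, and \cite[Lemma 4.3]{Bri99} (exactly as invoked in Proposition \ref{prop: ncte}) yields $\cplx K\simeq\cK[-i]$ for a coherent sheaf $\cK$ on $C\times C$ that is flat over the first factor.

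To obtain flatness over the second factor I would run the identical argument for the transpose functor $\Phi^t$, whose kernel is the pullback $s^\ast\cplx K$ of $\cplx K$ under the swap $s\colon C\times C\to C\times C$; a direct computation shows that $\Phi^t(\cO_y)$ is the derived restriction of $\cplx K$ to $C\times\{y\}$, so its concentration in a single degree is precisely flatness of $\cK$ over the second factor. The one point that needs justification is that $\Phi^t$ is again an equivalence. This follows because transposition reverses composition, $(\Psi\circ\Phi)^t\simeq\Phi^t\circ\Psi^t$, while by Theorem \ref{thm:sequencefibrewise} every element of $\Aut\cdbc C$ is a composite of the generators of Example \ref{exe:generators} together with elements of $\Aut^0\cdbc C$; the transpose of each such generator is visibly a transform of the same type (the diagonal and its ideal sheaf are symmetric, a twist by $\cO_C(x_0)$ transposes to a twist, and the graph of $f$ transposes to the graph of $f^{-1}$), hence an equivalence. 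Therefore $\Phi^t\in\Aut\cdbc C$, and applying the first two paragraphs to $\Phi^t$ gives that $\cK$ is also flat over the second factor. Alternatively, one may identify $\Phi^t$ as the Grothendieck--Serre dual of $\Phi^{-1}$, using that $C$ is Gorenstein with $\omega_C\simeq\cO_C$ since $p_a(C)=1$.

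The main obstacle I anticipate is exactly this second-factor step: flatness is not preserved by naive duality, so one cannot simply dualize the sheaf $\cK$ produced in the first part and expect $\bR\SHom(\cK,\cO)$ to stay flat. The transpose-functor device circumvents this by reducing the second factor to another instance of the WIT/connectedness/Bridgeland package rather than to a statement about duals of flat sheaves; the only price is checking that the transpose stays inside $\Aut\cdbc C$, which is where the explicit generators of Example \ref{exe:generators} and the exact sequence of Theorem \ref{thm:sequencefibrewise} enter. Conceptually the conclusion reflects the fact that $\cK$ is a shifted universal sheaf for a fine moduli problem of stable sheaves of coprime invariants on $C$, and such universal sheaves are automatically flat over both factors.
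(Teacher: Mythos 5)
Your proof is correct, but it takes a genuinely different route from the paper's. The paper argues via moduli theory: it reads off $\ch(\Phi^{\cplx K})=\left(\begin{smallmatrix}c&a\\d&b\end{smallmatrix}\right)$, notes $a,b$ are coprime, identifies the fine moduli space $\cM_C(a,b)\simeq C$ and its universal sheaf $\cP\simeq\cI_\Delta\otimes\pi_2^\ast\cO_C(x_0)$ (up to twists by pullbacks of line bundles), arranges $\ch(\Phi^{\cP})=\ch(\Phi^{\cplx K})$, and concludes from the exact sequence of Theorem \ref{thm:sequencefibrewise} that $\Phi^{\cplx K}$ and $\Phi^{\cP}$ differ by an element of $\Aut^0\cdbc{C}$, whence $\cplx K$ is a shifted universal sheaf and flatness over both factors is automatic. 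You instead run the WIT/openness/connectedness/Bridgeland package of Proposition \ref{prop: ncte} directly in the absolute setting, applied to skyscrapers (which are stable, hence covered by Remark \ref{rema:WITsemistable}, including at the singular point, where $\Phi^{\cI_\Delta}(\cO_x)\simeq\mathfrak m_x$ is still a sheaf), and then handle the second factor by the transpose functor; your justification that $\Phi^t$ is again an equivalence --- via $(\Psi\circ\Phi)^t\simeq\Phi^t\circ\Psi^t$ and the visible symmetry of the kernels of the generators in Example \ref{exe:generators} --- is sound (one can even shortcut it: since the quasi-inverse is itself an integral functor and $\cO_\Delta$ is symmetric, $(\Phi^{-1})^t$ is automatically a quasi-inverse of $\Phi^t$). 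The trade-off: the paper's proof yields the stronger and structurally useful conclusion, emphasised in the introduction, that the kernel is a shifted \emph{universal} sheaf of stable sheaves with coprime invariants, at the price of invoking the identifications $\cM_C(a,b)\simeq C$ from \cite{BBH08}; your proof avoids the moduli input but leans on the WIT statements for semistable sheaves from \cite{HLST09} (via Remark \ref{rema:WITsemistable}) and adds the transpose device, which is exactly the right way around the fact that flatness is not preserved by dualising the sheaf. Both arguments are complete; neither is circular.
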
          
\begin{proof}
Let $\ch(\Phi^\cplx{K})=
\begin{pmatrix}
 c & a \\
 d &b
\end{pmatrix}\in \SL_2(\bbZ)\,
$, then $a$ and $b$ are coprime numbers. We can assume $a\geq 0$, otherwise we take the Fourier-Mukai transform given by $\cplx K[1]$ instead of $\cplx K$. We consider the moduli space $\cM_C(a,b)$ of semistable sheaves on $C$ with rank $a$ and degree $b$. Since $a$ and $b$ are coprime numbers $\cM_C(a,b)$  is a fine moduli space parametrizing stable sheaves with such topological invariants, so that there is a universal sheaf $\cP$ on $\cM_C(a,b)\times C$. On the other hand from \cite{BBH08} one has isomorphisms
$$\cM_C(a,b)\simeq\cM_C(1,0)\simeq\cM_C(0,1)\simeq C.$$
By the isomorphisms $\cM_C(a,b)\times C\iso \cM_C(1,0)\times C\iso C\times C$ and up to twisting by pullbacks of line bundles on $\cM_C(1,0)$,  we get that $\cP\simeq \cI_\Delta\otimes{\pi_2}^\ast\cO_C(x_0)$, where $x_0$ is a non-singular point on $C$. This implies that $\Phi^{\cP}$ is a Fourier-Mukai transform and $$\ch(\Phi^{\cP})=\begin{pmatrix}
 c+at& a \\
 d+bt&b
\end{pmatrix},$$  for some $t\in \mathbb{Z}$.
Changing $\mathcal{P}$ by its twist with the pull-back of a line bundle on $C$ of degree $-t$, we get that $\ch(\Phi^\cP)=\ch(\Phi^\cplx{K})$. Then,  the two transforms $\Phi^\cP$ and $\Phi^\cplx{K}$ are isomorphic up to an object of $\Aut^0\cdbc{C}$, so that there exists a sheaf $\cK$ on $C\times C$ flat over both factors and an integer number $n$ such that $\cplx{K} \simeq\cK[n]$.
\end{proof}

\begin{exe} The following example shows that Proposition \ref{prop:flatkernel} is no longer true for reducible curves of arithmetic genus one. Consider $p\colon S\to T$ a smooth elliptic surface with some reducible not multiple fibre. That is,  the generic fibre of $p$ is isomorphic to a smooth elliptic curve and for some point  $0\in T$ of the discriminant, the fibre $S_0:=p^{-1}(0)$ is one of the reducible not multiple Kodaira fibres (see the list in \cite{kod} for the possible types). Assume that $p$ is projective.

For any $t\in T$, let us denote by $j_t\colon S_t\hookrightarrow S$ the immersion of the fibre.  Let  $C$ be any irreducible component of $S_0$ and $i\colon C\hookrightarrow S$ the natural inclusion. Since $C$ is  a $(-2)$-curve in $S$, then $E:=i_\ast \mathcal{O}_{C}$ is a spherical object of the derived category $\cdbc S$ of the surface.  Seidel and Thomas proved in \cite{SeTh01} that the corresponding twist functor $T_E$ is an autoequivalence of $\cdbc S$.  It is a general fact that this twist functor is equal to the Fourier-Mukai transform $\Phi:=\Phi^{\cplx{I(E)}}$ where the kernel $\cplx{I(E)}\in \cdbc{S\times S}$  is given by 
\begin{equation}\label{e:kertwist}
\cplx{I(E)}=\text{cone}(E^\vee\boxtimes E\to \mathcal{O}_\Delta)\, .
\end{equation}

Since $\Phi$ is an equivalence of categories, it follows from  \cite[Proposition 2.10]{HLS08} that  $\cplx{I(E)}$ has finite homological dimension over both factors. Moreover, $\cplx{I(E)}$ belongs to $\cdbc{S\times_T S}$, that is, $\Phi$ is a relative Fourier-Mukai transform. Hence, using Proposition \ref{prop:relativeequiv}, one has that for any $t\in T$, the restriction $\Phi_t$ is an autoequivalence of  the derived category $\cdbc{S_t}$ of the fibre. For $t\neq 0$, this is just the identity functor, but for the singular fibre $S_0$, it is a non-trivial autoequivalence $\Phi_0$. Let us show that  the kernel $\bL j_0^\ast \cplx{I(E)}$ of $\Phi_0$ is a genuine complex with at least two non-zero cohomology sheaves.

Using the exact sequence $$0\to \mathcal{O}_S(-C)\to \mathcal{O}_S\to E\to 0\, ,$$ one gets that  ${\bL j_0^\ast E}$ is isomorphic to a genuine complex $\cplx{E}$ 
\begin{equation}\label{e:resolucion}
\cplx{E}:=\{{\mathcal{O}_S(-C)}|_{S_0}\xrightarrow{f}\mathcal{O}_{S_0}\}\, ,
\end{equation}
with two non-zero cohomology sheaves, $\mathcal{H}^{-1}(\cplx{E})=\mathcal{T}or^1_{\mathcal{O}_S}(\mathcal{O}_C,\mathcal{O}_{S_0})$ and $\mathcal{H}^0(\cplx{E})={j_0}^\ast (E)$.

  The base change formula shows that the kernel $\bL j_0^\ast \cplx{I(E)}$ is isomorphic to \begin{equation}\label{e:triangle}
\text{cone}(\cplx{E}^\vee\boxtimes \cplx{E}\to \mathcal{O}_{\Delta_{0}}) \, ,
\end{equation} where $\Delta_0$ is the diagonal of $S_0$.

Consider the spectral sequence
$$E_2^{p,q}=\mathcal{E}xt^p(\mathcal{H}^{-q}(\cplx{E}),\mathcal{O}_{S_0})=\mathcal{H}^p(\mathcal{H}^{-q}(\cplx{E})^\vee)\implies \mathcal{H}^{p+q}(\cplx{E}^\vee)\, .$$
Since $\mathcal{H}^{-1}(\cplx{E})$ and $\mathcal{H}^{0}(\cplx{E})$  are both of pure dimension 1 and for any pure dimension 1 sheaf $\mathcal{E}xt^i(\mathcal{F}, \mathcal{O}_{S_0})=0 $ for all $i\neq0$, one has that the complex $\cplx{E}^\vee$  is given by
$$\cplx{E}^\vee=\{\mathcal{O}_{S_0}\xrightarrow{f^\ast}(\mathcal{O}_S(-C)|_{S_0})^\ast\}\, ,$$ and it
has non-trivial cohomology in degree 0 and 1.

The complex $\cplx{E}$ is concentrated in $[-1,0]$ with $\mathcal{H}^0(\cplx{E})\neq 0$ and $\cplx{E}^\vee$ is concentrated in $[0,1]$ and $\mathcal{H}^1(\cplx{E}^\vee)\neq 0$, then using the spectral sequence that computes the cohomology sheaves of the tensor product of two complexes
$$\mathcal{T}or_{-p}(\mathcal{H}^q(\cplx{F}), \cplx{G})\implies \mathcal{T}or_{-(p+q)}(\cplx{F}, \cplx{G})\, ,$$ one obtains that
$\mathcal{H}^{1}(\cplx{E}^\vee\boxtimes \cplx{E})\neq 0$.

On the other hand, if $d$ denotes the differential of the complex $\cplx{E}^\vee\boxtimes \cplx{E}$, we have $$d^{-1}=d_{\pi_1^\ast \cplx{E}^\vee}^0\otimes 1+1\otimes d_{\pi_2^\ast \cplx{E}}^{-1}=\pi_1^\ast f^\ast\otimes 1+1\otimes \pi_2^\ast f\, .$$ Since $\ker f$ and $\ker f^\ast$  are both non-zero, one has $\mathcal{H}^{-1}(\cplx{E}^\vee\boxtimes \cplx{E})=\ker d^{-1}\neq 0$.

Looking at the long exact sequence of cohomology corresponding to the triangle defined by \eqref{e:triangle},  one obtains that $\mathcal{H}^{-i}(\bL j_0^\ast \cplx{I(E)})\neq 0$ for $i=0,2$ which proves our claim.
\end{exe}

\subsection{Relative Fourier-Mukai transforms and Weiertra\ss\ fibrations}

\quad

\quad

Let $p\colon X\to T$ be a relatively integral elliptic fibration, that is, a proper flat morphism of schemes  whose fibres are integral  Gorenstein curves of arithmetic genus one. Generic fibres of $p$ are smooth elliptic curves, and the degenerated fibres are rational curves with one node or one cusp.
We will assume that the base scheme $T$ is connected and that there exists a regular section $\sigma\colon T\hookrightarrow X$ of $p$, that is, the image of $\sigma$ does not contain any singular point of the fibres.

Relatively integral elliptic fibrations have a Weierstra\ss\ form \cite[Lemma II.4.3]{Mir89}: If we denote by $H=\sigma(T)$ the image of the section and $\omega=R^1p_\ast\cO_X\simeq (p_\ast \omega_{X/T})^{-1}$, the line
bundle $\cO_X(3H)$ is relatively very ample and if 
$\cE=p_\ast \cO_X(3H)\simeq\cO_T\oplus\omega^{\otimes
2}\oplus\omega^{\otimes3}$ and $\bar p\colon \bbP(\cE^\ast)=\operatorname{Proj}(S^\bullet(\cE))\to T$
is the associated projective bundle, there is a closed immersion $j\colon X\hookrightarrow \bbP(\cE^\ast)$
of $T$-schemes such that $j^\ast\cO_{\bbP(\cE^\ast)}(1)=\cO_X(3H)$. In particular, $p$ is a projective morphism.

\begin{prop}Let $p\colon X\to T$ be a relatively integral elliptic fibration and let $\Phi=\Phi^{\cplx{K}}\colon \cdbc{X}\isom \cdbc{X}$ be a relative Fourier-Mukai transform. Then $\cplx{K}$ has only one non-trivial cohomology sheaf, that is, $\cplx{K}\simeq \mathcal{K}[n]$ for some coherent sheaf $\mathcal{K}$ on $X\times_T X$  and some  $n\in \mathbb{Z}$. Moreover, the sheaf $\mathcal{K}$ is flat over both factors.
\end{prop}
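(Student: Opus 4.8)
The plan is to reduce everything to the fibres and then invoke the globalization result of Proposition~\ref{prop: ncte}. Since $X\to T$ is projective---it embeds as a $T$-scheme into the projective bundle $\bbP(\cE^\ast)$---and $\Phi^{\cplx K}$ is an equivalence, I would first note that \cite[Proposition 2.10]{HLS08} guarantees that $\cplx K$ has finite homological dimension over both factors. Thus Proposition~\ref{prop:relativeequiv} applies, and for every closed point $t\in T$ the induced functor $\Phi_t\colon\cdbc{X_t}\to\cdbc{X_t}$, whose kernel is $\bL j_t^\ast\cplx K$, is an autoequivalence of the derived category of the fibre.

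Next I would examine each fibre $X_t$, which is a connected integral projective curve of arithmetic genus one, hence a smooth elliptic curve or a rational curve with one node or cusp. Proposition~\ref{prop:flatkernel} then applies to $\Phi_t$ and shows that the kernel $\bL j_t^\ast\cplx K$ is isomorphic, up to a shift $n_t$, to a sheaf $\cK_t$ on $X_t\times X_t$ that is flat over \emph{both} factors; in particular $\bL j_t^\ast\cplx K\simeq\cK_t[n_t]$ with $\cK_t$ flat over the first factor. Since $T$ is connected and the fibres of $p$ are connected, $X$ is connected, so this fibrewise description is exactly the hypothesis of Proposition~\ref{prop: ncte}. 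That proposition then produces a single integer $n$ and a coherent sheaf $\mathcal{K}$ on $X\times_T X$, flat over the first factor, with $\cplx K\simeq\mathcal{K}[n]$.

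The only point demanding genuine care is the passage from flatness over one factor to flatness over both, since Proposition~\ref{prop: ncte} delivers flatness over the first factor only; this will be the main obstacle, though a mild one. To close the gap I would argue as follows: as $\mathcal{K}$ is flat over $X$ through $\pi_1$ and $p$ is flat, $\mathcal{K}$ is flat over $T$ through $\rho$; moreover its restriction to each fibre $X_t\times X_t$ is $\cK_t$, which is flat over the second factor $X_t$ by Proposition~\ref{prop:flatkernel}. By the fibrewise criterion of flatness---a $T$-flat coherent sheaf on $X\times_T X$ is flat over $X$ via $\pi_2$ as soon as each of its fibre restrictions $\cK_t$ is flat over the corresponding $X_t$---one concludes that $\mathcal{K}$ is flat over the second factor as well, which completes the proof.
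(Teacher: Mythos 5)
Your proof is correct and follows essentially the same route as the paper: reduce to the fibres via Proposition~\ref{prop:relativeequiv}, apply Proposition~\ref{prop:flatkernel} to each genus-one fibre, and globalize with Proposition~\ref{prop: ncte}. The only difference is that you make explicit the passage to flatness over the second factor via the fibrewise criterion of flatness, a point the paper leaves implicit (it can equally be obtained by applying Proposition~\ref{prop: ncte} a second time with the roles of the two factors exchanged, since each $\cK_t$ is flat over both factors).
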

\begin{proof} By Proposition \ref{prop:relativeequiv}, all the absolute integral functors  $\Phi_t\colon \cdbc{X_t}\to \cdbc{X_t}$ induced on the fibres  are  equivalences due to the projectivity of the morphism $p$.  Thus, Proposition \ref{prop:flatkernel} proves that for every  $t\in T$ one has $\bL j_t^\ast \cplx{K}\simeq \mathcal{K}_t[n_t]$ where $\mathcal{K}_t$ is a sheaf on $X_t\times X_t$ flat over both factors. Hence, one concludes  by Proposition  \ref{prop: ncte}.
\end{proof}

For every point $t\in T$ there exists a morphism $\rho_t\colon \FM_T(\cdbc{X})\to \Aut \cdbc{X_t}$ where $\rho_t(\Phi^\cplx{K})$ is the Fourier-Mukai transform $\Phi_t$ defined by $\bL{j_t}^\ast\cplx K$.

\begin{prop}
There is a surjective morphism $\FM_T(\cdbc{X})\stackrel{\widetilde{\ch}}\lra \SL_2(\bbZ)$ such that for any point $t\in T$ the diagram 
$$\xymatrix{
\FM_T(\cdbc{X})\ar[r]^(.58){\widetilde{\ch}}\ar[d]_{\rho_t} & \SL_2(\bbZ)\\
\Aut \cdbc{X_t}\ar[ur]_{\ch_{X_t}}&
}$$
is commutative.
\end{prop}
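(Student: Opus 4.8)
The plan is to define $\widetilde{\ch}$ by the prescription forced by the diagram, namely $\widetilde{\ch}(\Phi):=\ch_{X_t}(\rho_t(\Phi))$, and to prove that the right-hand side does not depend on the point $t\in T$. Fixing one base point $t_0$, the map $\widetilde{\ch}=\ch_{X_{t_0}}\circ\rho_{t_0}$ is a group morphism valued in $\SL_2(\bbZ)$, being a composite of the group morphisms $\rho_{t_0}\colon\FM_T(\cdbc{X})\to\Aut\cdbc{X_{t_0}}$ and $\ch_{X_{t_0}}$; each $\ch_{X_t}(\Phi_t)$ lies in $\SL_2(\bbZ)$ by the fibrewise theory of the previous subsection. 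With this definition the asserted commutativity of the diagram for every $t$ is precisely the statement that $\ch_{X_t}\circ\rho_t$ is independent of $t$, so the whole content reduces to this independence together with surjectivity.

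To prove independence I would show that each entry of $\ch_{X_t}(\Phi_t)$ is a locally constant function of $t$; since $T$ is connected and $\SL_2(\bbZ)$ is discrete, it is then constant. In the basis $([\cO_{X_t}],[\cO_{x_t}])$ of $\widetilde K_\bullet(X_t)$, the columns of $\ch_{X_t}(\Phi_t)$ are the pairs $(\rk,\deg)$ of $\Phi_t(\cO_{X_t})$ and of $\Phi_t(\cO_{x_t})$, where I use the marked smooth point $x_t=\sigma(t)$ (the class $[\cO_{x_t}]$ being independent of the chosen smooth point). On a curve of arithmetic genus one the Euler form is the symplectic determinant, so $\rk\cF=-\chi_{X_t}(\cO_{x_t},\cF)$ and $\deg\cF=\chi_{X_t}(\cO_{X_t},\cF)$ for every $\cF$. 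Writing $\cO_H=\sigma_\ast\cO_T$, the regularity of the section makes $H\hookrightarrow X$ a regular immersion, so $\cO_H$ is a perfect complex with $\bL i_t^\ast\cO_H\simeq\cO_{x_t}$; moreover $\cO_X$ is perfect and, since $X\to T$ is projective and $\Phi\in\FM_T(\cdbc{X})$, Lemma \ref{lem:perfectFM} shows $\Phi(\cO_X)$ and $\Phi(\cO_H)$ are perfect on $X$. Using the base change isomorphism \eqref{e:basechange}, the four entries become fibrewise Euler pairings $\chi_{X_t}(\bL i_t^\ast\cA,\bL i_t^\ast\cB)$ with $\cA,\cB\in\{\cO_X,\cO_H,\Phi(\cO_X),\Phi(\cO_H)\}$ perfect. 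Each such pairing equals $\chi(X_t,\bL i_t^\ast(\cA^\vee\lotimes\cB))$, the fibrewise Euler characteristic of the fixed perfect complex $\cA^\vee\lotimes\cB$ on $X$; by cohomology and base change this is the rank at $t$ of the perfect complex $\bR p_\ast(\cA^\vee\lotimes\cB)$ on $T$, which is locally constant. Hence every entry is locally constant, and $\ch_{X_t}(\Phi_t)$ is constant on the connected base $T$.

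For surjectivity I would exhibit two relative Fourier-Mukai transforms whose images generate $\SL_2(\bbZ)$. The ideal sheaf $\cI_\Delta$ of the relative diagonal $\Delta\subset X\times_T X$ restricts fibrewise to $\cI_{\Delta_t}$ (flatness of $\cO_\Delta$ over $T$ keeps the defining sequence exact under $\bL j_t^\ast$), and it has finite homological dimension over both factors by the triangle $\cI_\Delta\to\cO_{X\times_T X}\to\cO_\Delta$; since each $\Phi^{\cI_{\Delta_t}}$ is an equivalence with $\ch=\begin{pmatrix}-1&1\\0&1\end{pmatrix}$ (Example \ref{exe:generators}), Proposition \ref{prop:relativeequiv} makes $\Phi^{\cI_\Delta}$ a relative Fourier-Mukai transform with $\widetilde{\ch}(\Phi^{\cI_\Delta})=\begin{pmatrix}-1&1\\0&1\end{pmatrix}$. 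Likewise the $T$-linear twist $-\otimes\cO_X(H)$ is an autoequivalence whose kernel is supported on the relative diagonal, hence lies in $\FM_T(\cdbc{X})$, and restricts fibrewise to $-\otimes\cO_{X_t}(x_t)$, giving $\widetilde{\ch}(-\otimes\cO_X(H))=\begin{pmatrix}1&0\\1&1\end{pmatrix}$. These two matrices generate $\SL_2(\bbZ)$, so $\widetilde{\ch}$ is surjective.

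The hard part will be the independence of $t$: phrasing the matrix entries as fibrewise Euler characteristics of fixed perfect complexes is what turns the a priori only semicontinuous ranks and degrees of the transformed sheaves into genuinely locally constant, hence constant, invariants on the connected base. Once this is done, the facts that $\widetilde{\ch}$ is a homomorphism and that the square commutes for all $t$ are formal, and surjectivity is routine given the identification of the two relative kernels above.
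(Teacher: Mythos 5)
Your proposal is correct, and for the one genuinely delicate step --- the independence of $\ch_{X_t}(\Phi_t)$ from $t$ --- it takes a different route from the paper. The paper's proof gets local constancy of the two columns of the matrix from flatness of the transformed kernels: for the column of $[\cO_{X_t}]$ it invokes Remark \ref{rema:WITsemistable} (every semistable sheaf on a genus-one curve is WIT for every autoequivalence) together with Proposition \ref{prop:WITopen}, so that $\cO_{X\times_T X}$ is WIT for the base-changed functor $\Phi_X$ with transform flat over both factors and compatible with restriction; for the column of $[\cO_x]$ it uses that the kernel itself is a shifted sheaf flat over $X$ (Propositions \ref{prop:flatkernel} and \ref{prop: ncte}). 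You instead encode the four matrix entries as Euler pairings of $\Phi(\cO_X)$ and $\Phi(\cO_H)$ against $\cO_X$ and $\cO_H$, all perfect (the last two by Lemma \ref{lem:perfectFM}), and conclude by local constancy of the fibrewise Euler characteristic of a fixed perfect complex on the connected base. Your argument buys independence from the semistability/WIT input and from the structure theory of the kernel, so it would apply verbatim to any relative Fourier--Mukai transform between projective fibrations whose fibres carry a $\ch$-type invariant defined by Euler pairings; the paper's argument buys a reuse of machinery it needs anyway (the flat sheaf structure of $\cplx K$ is used again in computing $\ker\widetilde{\ch}$). The surjectivity step is identical in both proofs. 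One cosmetic remark: the matrix $\bigl(\begin{smallmatrix}-1&1\\0&1\end{smallmatrix}\bigr)$ you quote from Example \ref{exe:generators} has determinant $-1$; the correct value is $\bigl(\begin{smallmatrix}-1&1\\0&-1\end{smallmatrix}\bigr)$ (the paper itself uses the corrected matrix in its surjectivity argument), and the pair still generates $\SL_2(\bbZ)$, so nothing in your proof is affected.
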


\begin{proof}
Let $\Phi\in \FM_T(\cdbc{X})$ be a  relative Fourier-Mukai transform. Let us define $$\widetilde{\ch}(\Phi):=\ch_{X_t}(\Phi_t)\, ,$$ for any $t\in T$, where $\ch_{X_t}$ is  the group morphism for the curve $X_t$  given by Equation (\ref{eq:sequencefibrewise}).

 Let us  prove that  $\widetilde{\ch}$ is well defined.
As we have already remarked, to obtain the matrix $\ch_{X_t}(\Phi_t)$ it is enough to compute the rank and degree of $\Phi_t(\cO_{X_t})$ and $\Phi_t(\cO_{x})$ with $ x$ a non-singular point of $X_t$.

Let us consider the relative Fourier-Mukai transform $\Phi_X\colon\cdbc{X\times_T X}\to\cdbc{X\times_T X}$ given by Equation (\ref{eq:basechangeFM}) and take, on the one hand,  
 the structure sheaf $\cO_{X\times_TX}$. For every $x\in X$ such that $p(x)=t$, its restriction  $\mathcal{O}_{X_t}$ to the fibre  $X_t=p^{-1}(t)=\pi_1^{-1}(x)$ is, by Remark \ref{rema:WITsemistable}, WIT$_i$-$\Phi_t$ for some $i$. Since $\mathcal{O}_{X\times_T X}$  is a sheaf  flat over both factors we get, by Proposition \ref{prop:WITopen}, that $\cO_{X\times_T X}$ is WIT$_i$-$\Phi_X$, its Fourier-Mukai transform $\widehat{\cO_{X\times_TX}}$ is flat over both factors of $X\times_T X$ and  is compatible with base change. That is, for every $x\in X_t$, we have   $(\widehat{\cO_{X\times_TX}})_x[-i] \simeq \Phi_t({\cO_{X_t}})$ .  Thus, the rank and degree of $\Phi_t(\cO_{X_t})$ do not  depend on $t$. 
 
Consider on the other hand the structure sheaf $\cO_\Delta$ of the relative diagonal immersion $\delta\colon X\hra X\times_T X$. Then $\Phi_X(\cO_\Delta)\simeq \cplx{K}$ where $\cplx{K}$ is the kernel  of the original  relative Fourier-Mukai transform $\Phi$. As we have seen before, $\cplx{K}\simeq \mathcal{K}[n]$ for some  sheaf $\mathcal{K}$ on $X\times_S X$ flat over $X$ and  some $n\in \mathbb{Z}$. Again, base change compatibility  gives that  $\mathcal{K}_x[n]\simeq\Phi_t(\mathcal{O}_{x})$ for every $x\in X$ with $p(x)=t$. Thus  the rank and the degree of $\Phi_t(\mathcal{O}_{x})$ don't depend on $t$ either. Therefore, the morphism $\widetilde{\ch}$ is well defined.

To conclude that $\FM_T(\cdbc{X})\stackrel{\widetilde{\ch}}\lra \SL_2(\bbZ)$ is a surjective morphism, one has just to consider the relative Fourier-Mukai transforms given by $\cI_\Delta$ and $\delta_\ast\cO_X(\sigma(T))$, where $\cI_\Delta$ is the ideal sheaf of the relative diagonal immersion of $X$. By Example \ref{exe:generators} the matrices corresponding to these equivalences are $ \begin{pmatrix} -1 & 1\\ 0 &-1 \end{pmatrix}$ and $ \begin{pmatrix} 1 & 0\\ 1 & 1 \end{pmatrix}$ respectively. Since these matrices are a pair of generators for the group $\SL_2(\bbZ)$ the result follows.
\end{proof}

\begin{thm}\label{t:FMW}
There exists an  exact sequence
$$1\lra \Aut^0_T\cdbc{X}\lra \FM_T(\cdbc{X})\stackrel{\widetilde{\ch}}\lra \SL_2(\bbZ)\lra 1\, ,$$
with $\Aut^0_T\cdbc{X}=\Aut(X/T)\rtimes(2\bbZ\times \Pic^{\underline 0}(X)) $, where $$\Pic^{\underline 0}(X)=\{\calL\in \Pic(X) ;  \deg(\calL_t)=0 \text{, for any } t\in T\}.$$
\end{thm}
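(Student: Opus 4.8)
The plan is to identify the kernel of the surjective morphism $\widetilde{\ch}\colon\FM_T(\cdbc{X})\to\SL_2(\bbZ)$ produced in the preceding proposition with the subgroup $\Aut^0_T\cdbc{X}$, since surjectivity is already in hand. One inclusion is routine: if $\Phi\simeq\bR f_\ast(\calL\otimes-)[n]$ with $f\in\Aut(X/T)$, $\calL\in\Pic^{\underline 0}(X)$ and $n\in 2\bbZ$, then its kernel $\Gamma_{f\ast}\calL[n]$ lives on $X\times_T X$, so $\Phi\in\FM_T(\cdbc{X})$, and by base change its fibre functor is $f_{t\ast}(-\otimes\calL_t)[n]$ with $\deg\calL_t=0$ and $n$ even, whose matrix is the identity by Example \ref{exe:generators}(1). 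Hence $\Aut^0_T\cdbc{X}\subseteq\ker\widetilde{\ch}$, and checking the composition law shows it is the asserted semidirect product $\Aut(X/T)\rtimes(2\bbZ\times\Pic^{\underline 0}(X))$, exactly as in the absolute case of Theorem \ref{thm:sequencefibrewise}.

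For the reverse inclusion I would take $\Phi=\Phi^{\cplx{K}}\in\ker\widetilde{\ch}$. By the preceding proposition $\cplx{K}\simeq\cK[n]$ with $\cK$ flat over both factors, so each fibre functor $\Phi_t$ has kernel $\cK_t[n]$ concentrated in degree $-n$. Since $\ch_{X_t}(\Phi_t)=\widetilde{\ch}(\Phi)=\Id$, the exact sequence \eqref{eq:sequencefibrewise} gives $\Phi_t\in\Aut^0\cdbc{X_t}$, i.e. $\Phi_t\simeq f_{t\ast}(-\otimes\calL_t)[n]$ with $f_t\in\Aut(X_t)$ and $\calL_t\in\Pic^0(X_t)$ (the shift is the global $n$ because the kernel is concentrated in degree $-n$); comparing with Example \ref{exe:generators}(1) forces $(-1)^n=1$, so $n\in 2\bbZ$. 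As $n$ is even, $[n]\in\Aut^0_T\cdbc{X}$, so replacing $\Phi$ by $\Psi=\Phi[-n]$ I may assume $n=0$; then $\Psi$ is again a relative Fourier-Mukai transform whose kernel $\cK$ is an honest sheaf, and $\Psi_t\simeq f_{t\ast}(-\otimes\calL_t)$.

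The next step is to show $\Psi$ sends skyscrapers to skyscrapers. For a closed point $x\in X_t$ I write $\cO_x=i_{t\ast}\cO_x$ and use the base-change identity \eqref{e:directimage} to get $\Psi(\cO_x)\simeq i_{t\ast}\Psi_t(\cO_x)$. As $\calL_t$ is invertible, $\cO_x\otimes\calL_t\simeq\cO_x$ at every point of $X_t$, whence $\Psi_t(\cO_x)\simeq\cO_{f_t(x)}$ and therefore $\Psi(\cO_x)\simeq\cO_y$ with $y=f_t(x)$. Proposition \ref{p:trivial} then applies (its finite-homological-dimension hypothesis holds since $X\to T$ is projective and $\Psi$ is a relative Fourier-Mukai transform) and yields a relative morphism $f\colon X\to X$ and a line bundle $\calL$ with $\Psi(\cplx{E})\simeq\bR f_\ast(\calL\otimes\cplx{E})$. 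Because $\Psi$ is an equivalence, $\bR f_\ast$ is an equivalence and $f$ is an isomorphism; being a $T$-morphism it lies in $\Aut(X/T)$. Finally, restricting $\Psi(\cO_X)\simeq\bR f_\ast\calL$ to the fibre over $t$ and comparing with $\Psi_t(\cO_{X_t})=f_{t\ast}\calL_t$ gives $\calL|_{X_t}\simeq\calL_t$, so $\deg(\calL|_{X_t})=0$ for all $t$ and $\calL\in\Pic^{\underline 0}(X)$. Undoing the shift, $\Phi\simeq\bR f_\ast(\calL\otimes-)[n]\in\Aut^0_T\cdbc{X}$, proving $\ker\widetilde{\ch}=\Aut^0_T\cdbc{X}$ and hence the exact sequence.

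The main obstacle is precisely this middle passage from the fibrewise descriptions $\Phi_t\simeq f_{t\ast}(-\otimes\calL_t)[n]$ to a single global relative trivial transform: everything hinges on controlling the shift (constancy and evenness of $n$, supplied by Proposition \ref{prop: ncte}/Corollary \ref{prop:skyscraper} together with the identity-matrix condition) and on feeding skyscraper-preservation into Proposition \ref{p:trivial}. The verifications that $f$ is a genuine relative automorphism and that $\calL$ has fibrewise degree zero are then pure base change. One subtle point worth double-checking is that $\cO_x\otimes\calL_t\simeq\cO_x$ at the singular points of the degenerate fibres, which is exactly where the argument could a priori fail, but holds because elements of $\Pic(X_t)$ are locally free of rank one everywhere.
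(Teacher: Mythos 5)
Your proof is correct and follows essentially the same route as the paper's: identify the kernel of $\widetilde{\ch}$ by showing its elements send skyscrapers to skyscrapers up to an even shift, invoke Proposition \ref{p:trivial}, and then check that $f$ is a relative automorphism and that $\calL$ has fibrewise degree zero. The only cosmetic difference is that you deduce skyscraper-preservation by invoking the fibrewise classification of Theorem \ref{thm:sequencefibrewise}, whereas the paper reads it off directly from $\rk(\Phi_t(\cO_x))=0$, $\deg(\Phi_t(\cO_x))=1$ and the fact that $\cK_x$ is a sheaf; both arguments rest on the same facts and the rest of the proof is identical.
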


\begin{proof}
It only remains to compute the kernel of $\widetilde{\ch}$. Let $\Phi=\Phi^{\cplx K}\in \FM_T(\cdbc{X})$ be a relative Fourier-Mukai transform. We know that  $\cplx{K}\simeq \mathcal{K}[n]$ where $\cK$ is a sheaf on $X\times_T X$ flat over $X$ and $n\in \mathbb{Z}$. For any $x\in X$, we have  $\Phi(\cO_x)\simeq{i_x}_\ast\cK_x[n]$ and, if $p(x)=t$, then $\cK_x[n]\simeq \Phi_t(\mathcal{O}_x)$. Suppose that  $\Phi$ is in the kernel of $\widetilde{\ch}$. This means that  $\deg (\Phi_t(\cO_x))=1$ and $\rk(\Phi_t(\cO_x))=0$ for any $t\in T$ and any $x\in X_t$. Since $\cK_x$ is a sheaf, this implies that ${i_x}_\ast\cK_x$ is a skyscraper sheaf $\cO_y$ and $n\in2\bbZ$. Moreover as $\Phi$ is a  relative Fourier-Mukai transform, $x$ and $y$ belong to the same fibre. Then $\Phi[-n]$ sends skyscraper sheaves to skyscraper sheaves. By Proposition \ref{p:trivial} one gets that $\Phi^{\cplx K}\simeq \bR f_\ast(-\otimes\calL)[n]$, with $\calL$ a line bundle on $X$ and $f\colon X\to X$ a relative morphism. The fact that $\Phi$ is a relative equivalence implies that $f\in \Aut(X/T)$. Thus, $\bR f_\ast= f_\ast$. Finally, since $\Phi$ lies in the kernel of $\widetilde{\ch}$, we deduce that $\calL\in \Pic^{\underline 0}(X)$.
\end{proof}

\begin{rem}\label{r:Pic}Notice that since $p\colon X\to T$ has a regular section $\sigma\colon T\to X$, the exact sequence
$$0\to \Pic(T)\xrightarrow{p^\ast} \Pic(X)\to \textbf{Pic}_{X/T}(T)\to 0\, ,$$ splits via the retraction defined by  $\sigma \in X(T)$. In particular, the exact sequence
$$0\to \Pic(T)\xrightarrow{p^\ast}\Pic^{\underline 0}(X)\to \textbf{Pic}^0_{X/T}(T)\to 0\, ,$$
splits, that is, $\text{Pic}^{\underline 0}(X)\simeq \textbf{Pic}^0_{X/T}(T)\times \Pic(T)$. 
\end{rem}

\section{Relative Fourier-Mukai transforms for abelian schemes}
\subsection{Derived equivalences for abelian varieties}
\quad

\quad

 A complete study that includes a geometric interpretation of any exact equivalence between the derived categories of coherent sheaves $\cdbc{A}$ and $\cdbc{B}$ of two abelian varieties $A$ and $B$ was carried out by Polishchuk \cite{Pol96} and Orlov \cite{Or02}. The latter  also gave a  description of all exact autoequivalences of the derived category of coherent sheaves of an abelian variety over an algebraically closed field.

 Let $A$ be an abelian variety over $\Bbbk$. Denote $\hat{A}$ the dual abelian variety and $\mathcal{P}$ the Poincar\'e line bundle on $A\times \hat{A}$.
 
Following  Mukai \cite{Muk78}, for a coherent sheaf $\mathcal{F}$ on $A$ we consider the subgroup
$$\Upsilon(\mathcal{F})=\{ (a,\alpha)\in A\times \hat{A}\text{ such that } T_a^\ast \mathcal{F}\simeq \mathcal{F}\otimes \mathcal{P}_\alpha \} \subset A\times \hat{A}\, ,$$ where $T_a\colon A\to A$ denotes the translation by $a\in A$.
The sheaf $\mathcal{F}$ is said to be {\it semihomogeneous} if $\dim \Upsilon(\mathcal{F})=\dim A$.

The following result is due to Orlov \cite[Proposition 3.2]{Or02}.

\begin{prop} \label{VarAb}Let $A, B$ be abelian varieties, and let $\cplx{K}$ be an object of
$\cdbc{A \times B}$ such that the integral functor $\Phi^\cplx{K}\colon \cdbc{A}\isom \cdbc{B}$  is a Fourier-Mukai transform.
Then $\cplx{K}$ has only one non-trivial cohomology sheaf, that is, $\cplx{K}\simeq \mathcal{K}[n]$  where $\mathcal{K}$ is a sheaf on $A \times B$   and $n\in \mathbb{Z}$.  
\end{prop}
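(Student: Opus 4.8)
The plan is to prove that the kernel $\cplx{K}$ of a Fourier-Mukai transform between abelian varieties is concentrated in a single degree by analyzing the action of the functor on skyscraper sheaves. The key observation is that since $\Phi^{\cplx{K}}$ is an equivalence, it must send the point-like objects $\cO_a$ (for $a \in A$) to objects that are themselves very rigid. More precisely, for each closed point $a \in A$ one has $\Phi^{\cplx{K}}(\cO_a) \simeq \bL j_a^\ast \cplx{K}$ (up to the pushforward by the inclusion of the relevant fibre), so understanding the concentration of $\cplx{K}$ reduces to understanding the concentration of $\Phi^{\cplx{K}}(\cO_a)$ for all $a$.

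First I would exploit the homogeneity of the abelian variety. The group of autoequivalences of $\cdbc{A}$ contains all translations $T_a$ and all tensorings by elements $\mathcal{P}_\alpha$ of $\hat A$; composing $\Phi^{\cplx{K}}$ with such equivalences permutes the objects $\cO_a$ and twists them, and the group $A \times \hat A$ acts transitively enough that the cohomological amplitude of $\Phi^{\cplx{K}}(\cO_a)$ cannot jump from point to point. Concretely, Orlov's argument shows that the images $\widehat{\cO_a} := \Phi^{\cplx{K}}(\cO_a)$ are \emph{semihomogeneous} objects: the functor intertwines the $A\times\hat A$-action on the source with a corresponding action on the target, which forces each $\widehat{\cO_a}$ to be a shifted semihomogeneous sheaf, and in particular to be concentrated in a single degree $n_a$. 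The semihomogeneity is precisely where the hypothesis that $A,B$ are abelian varieties (rather than arbitrary varieties) is used in an essential way.

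Next I would show that the integer $n_a$ is independent of $a$. This follows exactly as in Proposition~\ref{prop: ncte} of the present paper: the set of points where $\cO_a$ is WIT$_i$-$\Phi^{\cplx{K}}$ is open by Corollary~\ref{prop:skyscraper}, and since $A$ is connected (indeed irreducible) the locally constant function $a \mapsto n_a$ is constant, say equal to $n$. Writing $\Psi = \Phi^{\cplx{K}}[-n]$ so that $\Psi(\cO_a)$ is an honest sheaf for every $a$, I can invoke the criterion for a complex to be a shifted flat sheaf: the restriction $\bL j_a^\ast(\cplx{K}[-n])$ is concentrated for every $a \in A$, so by the cohomology-and-base-change lemma of Bridgeland (\cite[Lemma 4.3]{Bri99}, already cited in the proof of Proposition~\ref{prop: ncte}) the kernel $\cplx{K}[-n]$ is itself a sheaf $\mathcal{K}$ flat over $A$. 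Hence $\cplx{K} \simeq \mathcal{K}[n]$ as claimed.

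The main obstacle I expect is establishing the single-degree concentration of $\Phi^{\cplx{K}}(\cO_a)$, i.e.\ showing that each image of a skyscraper is a shifted \emph{sheaf} rather than a genuine complex. Unlike the elliptic-curve case treated earlier, where one could read the amplitude off the $\SL_2(\mathbb Z)$-action on the Grothendieck group, here one genuinely needs the structure theory of semihomogeneous sheaves on abelian varieties: the point is that the equivalence, being $A\times\hat A$-equivariant in the appropriate sense, must carry the simplest objects $\cO_a$ to objects whose stabilizer group $\Upsilon$ is as large as possible, and Orlov's classification then pins these down to shifted semihomogeneous sheaves. Once that rigidity input is in place, the passage from fibrewise concentration to global concentration of $\cplx{K}$ is formal and mirrors the argument already given for Proposition~\ref{prop: ncte}.
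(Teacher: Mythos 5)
First, a point of comparison with the paper: the paper does not prove this proposition at all; it is quoted from Orlov (\cite[Proposition 3.2]{Or02}), whose argument works with the kernel $\cplx{K}$ on $A\times B$ globally --- each cohomology sheaf $\mathcal{H}^i(\cplx{K})$ inherits the translation relation \eqref{e:traslacion} and is therefore semihomogeneous on $A\times B$, and one then excludes more than one non-zero cohomology sheaf by a $\Hom$/$\Ext$-vanishing argument. Your skeleton --- reduce to the fibres $\bL j_a^\ast\cplx{K}$, get constancy of the shift from openness of the WIT locus (Corollary \ref{prop:skyscraper}) and connectedness of $A$, and conclude with \cite[Lemma 4.3]{Bri99} --- is sound and mirrors Proposition \ref{prop: ncte}; if completed it would even give flatness over $A$ for free, which the paper proves separately in Proposition \ref{flatness}.

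The genuine gap is at the one step that carries all the weight: the claim that each $\Phi^{\cplx{K}}(\cO_a)$ is a shifted sheaf because it is a ``semihomogeneous object.'' What the intertwining relation defining $f_{\cplx{K}}$ actually gives is that the stabilizer of $E=\Phi^{\cplx{K}}(\cO_a)$, i.e.\ the set of $(b,\beta)\in B\times\hat B$ with $T_b^\ast E\simeq E\otimes\mathcal{P}_\beta$, contains the $g$-dimensional subvariety $f_{\cplx{K}}(\{0\}\times\hat A)$. But a $g$-dimensional stabilizer does not force concentration: if $\mathcal{F}$ is any semihomogeneous sheaf, the complex $\mathcal{F}\oplus\mathcal{F}[1]$ has exactly the same stabilizer and is not a shifted sheaf. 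Orlov's classification concerns semihomogeneous \emph{sheaves}; it says nothing about which \emph{complexes} have large stabilizer, so it cannot ``pin down'' $E$ to a shifted sheaf. To exclude genuine complexes one must combine the equivariance with the rigidity of $\cO_a$ --- namely $\Hom(\cO_a,\cO_{a'}[j])=0$ for $j<0$, hence the analogous vanishing for the images --- and then exhibit a non-zero map from the top to the bottom cohomology sheaf of $E$ (after a suitable translation and twist), which via the standard spectral-sequence corner argument would contradict that vanishing when the amplitude is positive. Producing that non-zero $\Hom$ between two semihomogeneous sheaves is precisely the hard content of Orlov's Proposition 3.2; as written, your appeal to his classification assumes the conclusion at the only place where something needs to be proved.
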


We can make more precise the nature of the kernel of an equivalence between abelian varieties.

\begin{prop} \label{flatness} The sheaf $\mathcal{K}$ associated to an equivalence $\Phi^{\cplx{K}}\colon \cdbc{A}\isom  \cdbc{B}$ is a semihomogeneous sheaf and it
is flat over both factors.
\end{prop}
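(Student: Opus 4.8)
The plan is to combine Proposition \ref{VarAb}, which already gives $\cplx{K}\simeq \mathcal K[n]$ for a genuine sheaf $\mathcal K$ on $A\times B$, with Orlov's description of how a derived equivalence interacts with the translation--twist symmetries of $\cdbc{A}$ and $\cdbc{B}$. Write $G_A=A\times \hat A$ and $G_B=B\times \hat B$, and for $g=(a,\alpha)\in G_A$ let $\Psi^A_g$ be the autoequivalence of $\cdbc{A}$ sending $\mathcal E$ to $T_a^\ast\mathcal E\otimes\mathcal P_\alpha$; define $\Psi^B_h$ for $h\in G_B$ analogously. The key input from Orlov \cite{Or02} is that $\Phi=\Phi^{\mathcal K}$ intertwines these actions: there is an isomorphism of group schemes $f\colon G_A\isom G_B$ (the isometric isomorphism attached to $\Phi$) with $\Phi\circ\Psi^A_g\simeq\Psi^B_{f(g)}\circ\Phi$ for all $g$. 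Note that $\dim B=\dim A$, so $\dim(A\times B)=2\dim A=\dim G_A$.

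First I would establish semihomogeneity. The group $G_A\times G_B=(A\times\hat A)\times(B\times\hat B)$ is exactly $K(A\times B)$, and its natural action on kernels by translation and twist is, at the level of functors, $(g,h)\cdot\Phi^{\mathcal K}=\Psi^B_h\circ\Phi^{\mathcal K}\circ\Psi^A_{g^{-1}}$; by the uniqueness of Fourier--Mukai kernels this action agrees with the translation--twist action defining $\Upsilon(\mathcal K)$. Using the intertwining relation one computes $\Psi^B_h\circ\Phi\circ\Psi^A_{g^{-1}}\simeq\Psi^B_{h-f(g)}\circ\Phi$, which is isomorphic to $\Phi$ precisely when $h=f(g)$, because the $G_B$-action on $\cdbc{B}$ is faithful (it acts faithfully already on skyscrapers and on $\cO_B$). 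Hence the stabilizer of $\mathcal K$, namely $\Upsilon(\mathcal K)$, is the graph of $f$, an abelian subvariety of $K(A\times B)$ isomorphic to $G_A$ and therefore of dimension $2\dim A=\dim(A\times B)$. This is exactly the condition that $\mathcal K$ be semihomogeneous.

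Next I would deduce flatness over $A$, the argument over $B$ being symmetric. Since $\Upsilon(\mathcal K)=\mathrm{graph}(f)$ projects onto $A$ through the first coordinate, for every $a\in A$ there exist $b,\alpha,\beta$ with $T^\ast_{(a,b)}\mathcal K\simeq\mathcal K\otimes\mathcal N$ as sheaves on $A\times B$, where $\mathcal N=\mathcal P_\alpha\boxtimes\mathcal P_\beta$ is algebraically equivalent to $\mathcal O_{A\times B}$. Restricting this isomorphism to the fibres of $\pi_1\colon A\times B\to A$ shows that the fibre $\mathcal K_{a_0+a}$ is isomorphic to a translate of $\mathcal K_{a_0}$ twisted by a degree-zero line bundle on $B$. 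Since both translation and twisting by a line bundle algebraically equivalent to $\mathcal O_B$ leave the Euler characteristic, hence the Hilbert polynomial, unchanged, the Hilbert polynomial of the fibre $\mathcal K_a$ is independent of $a$. As $A$ is integral, constancy of the fibrewise Hilbert polynomial for the projective morphism $\pi_1$ forces $\mathcal K$ to be flat over $A$; the same argument applied to $\pi_2$ gives flatness over $B$.

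The \emph{main obstacle} lies entirely in the first two steps: one must correctly import Orlov's intertwining statement together with the isometric isomorphism $f$, and verify that the $K(A\times B)$-action on kernels matches the translation--twist action defining $\Upsilon$, including the sign conventions in $f$ and in $\Psi^A,\Psi^B$, so that the identification $\Upsilon(\mathcal K)=\mathrm{graph}(f)$ holds on the nose. Once this identification is secured, both the dimension count yielding semihomogeneity and the Hilbert-polynomial argument yielding flatness are routine.
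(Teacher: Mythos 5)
Your proof is correct, and for the semihomogeneity half it is essentially the paper's argument: both identify $\Upsilon(\mathcal{K})$ with the graph of the isometric isomorphism $f_{\cplx{K}}\colon A\times\hat{A}\isom B\times\hat{B}$ attached to $\Phi^{\cplx{K}}$ and conclude from $\dim\Gamma_{f_{\cplx{K}}}=2g$. The only difference there is packaging: the paper quotes Orlov's Theorem 2.10 and Corollary 2.13, which already give the kernel-level relation $T_{(a,b)}^\ast\mathcal{K}\otimes\pi_1^\ast\mathcal{P}_\alpha\simeq\mathcal{K}\otimes\pi_2^\ast\mathcal{P}_\beta$ characterizing $f_{\cplx{K}}(a,\alpha)=(b,\beta)$, whereas you re-derive it from the functor-level intertwining together with uniqueness of Fourier-Mukai kernels and faithfulness of the $B\times\hat{B}$-action; that is a legitimate, slightly longer route to the same identification, and the sign ambiguities you flag are harmless for the dimension count. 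Where you genuinely diverge is the flatness step. The paper applies generic flatness to get a nonempty open $U\subset A$ over which $\mathcal{K}$ is flat and then uses the translation relation to propagate flatness, writing any $a\in A$ as $x_1+x_2$ with $x_i\in U$. You instead note that the same relation makes each fibre $\mathcal{K}_{a_0+a}$ a translate of $\mathcal{K}_{a_0}$ twisted by an element of $\Pic^0(B)$, so the fibrewise Hilbert polynomial is constant, and you invoke the criterion that over an integral base constancy of the Hilbert polynomial of the fibres of a coherent sheaf under a projective morphism is equivalent to flatness. Both mechanisms are standard and both rest on the same translation-twist relation; yours trades generic flatness for the Hilbert-polynomial criterion, while the paper's version avoids any Euler-characteristic bookkeeping. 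Either is acceptable.
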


\begin{proof} 
Since $\Phi^{\cplx{K}}$ is an equivalence, by \cite[Theorem 2.10 and Corollary 2.13]{Or02} there is an isomorphism
$$f_{\cplx{K}}\colon A\times \hat{A}\to B\times \hat{B}\, ,$$ and $f_{\cplx{K}}(a,\alpha)=(b,\beta)$ if and only if 

\begin{equation}\label{e:traslacion}
T_{(a,b)}^\ast \mathcal{K}\otimes \pi_1^\ast \mathcal{P}_\alpha\simeq\mathcal{K}\otimes \pi_2^\ast \mathcal{P}_\beta\, ,
\end{equation}
where $\pi_i$ for $i=1,2$ are the natural projections of $A\times B$ onto its factors. This means that $(a,\alpha,b,\beta)\in \Gamma_{f_\cplx{K}}$ if and only if $(a,b,\alpha^{-1},\beta)\in \Upsilon(\mathcal{K})$. Since the dimension of the graph $\Gamma_{f_\cplx{K}}$ is equal to $2g$, one has that $\dim\Upsilon(\mathcal{K})=2g$ and thus $\mathcal{K}$ is a semihomogeneous sheaf on $A\times B$.

 Let us see that $\mathcal{K}$ is flat for $\pi_1\colon A\times B\to A$. 
By  generic flatness  \cite[Theorem 6.9.1]{EGAIV2},  there exists an open subset $U\subset A$ such that $\mathcal{K}|_{U\times B}$ is flat over $U$.  Since $U$ is a non-empty open subset of an abelian variety, for any $a\notin U$, one has that $a=x_1+x_2$ with $x_1, x_2\in U$.  Then, translating $U$ and using again  Equation \eqref{e:traslacion} we obtain that $\mathcal{K}$ is flat everywhere.

The proof for $\pi_2\colon A\times B\to B$ is the same.
\end{proof}
Any isomorphism $f\colon A\times \hat{A}\isom B\times\hat{B}$ can be written as a matrix  $\begin{pmatrix} \alpha &\beta\\
 \gamma & \delta
\end{pmatrix}$  where $\alpha\colon A\to B$, $\beta\colon \hat{A}\to B$, $\gamma\colon A\to \hat{B}$ and $\delta\colon \hat{A}\to \hat{B}$ are morphisms of abelian varieties. One defines the isomorphism $$f^\dag\colon B\times \hat{B}\isom A\times\hat{A}\, ,$$ given by the matrix $\begin{pmatrix} \hat{\delta}&-\hat{\beta}\\
-\hat{\gamma} & \hat{\alpha}
\end{pmatrix}$.
 We denote by $U(A\times\hat{A}, B\times\hat{B})$ the subgroup of isomorphisms $f\colon A\times\hat{A}\isom B\times \hat{B}$ that are isometric, that is, such that $f^\dag=f^{-1}$. When $A=B$, it is denoted by $U(A\times\hat{A})$.

The following result \cite[Theorem 4.14]{Or02} gives a complete description of the group of autoequivalences of the derived category $\cdbc{A}$ of an  abelian variety $A$ over an algebraically closed field $\Bbbk$.
\begin{thm}\label{thm:sequenceav} Let $A$ be an abelian variety over $\Bbbk$. Then one has the following exact sequence of groups $$0\to \mathbb{Z}\oplus(A\times \hat{A})\to \Aut\cdbc{A}\to U(A\times\hat{A})\to 1\, ,$$ where the autoequivalence defined by
 $(n, a, L)\in \mathbb{Z}\oplus (A\times \hat{A})$ is $$\Phi^{(n, a, L)}(\cplx{E})=T_{a\ast}(\cplx{E})\otimes L[n]\, .$$
\end{thm}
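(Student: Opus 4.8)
The plan is to study the assignment $\gamma\colon \Phi^{\cplx K}\mapsto f_{\cplx K}$, where $f_{\cplx K}\in\Aut(A\times\hat A)$ is the isomorphism supplied by Proposition \ref{flatness} (i.e. by \cite[Theorem 2.10 and Corollary 2.13]{Or02}) and characterised by \eqref{e:traslacion}, and to identify its kernel and image. First I would check that $\gamma$ is a group homomorphism: tracking the defining relation \eqref{e:traslacion} through the convolution formula \eqref{e:compos} shows that if $\cplx K$ and $\cplx H$ induce $f_{\cplx K}$ and $f_{\cplx H}$, then $\cplx H\ast_T\cplx K$ induces $f_{\cplx H}\circ f_{\cplx K}$, so that $\gamma(\Psi\circ\Phi)=\gamma(\Psi)\circ\gamma(\Phi)$. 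That $\gamma$ lands in $U(A\times\hat A)$, i.e. $f_{\cplx K}^\dag=f_{\cplx K}^{-1}$, I would deduce from duality: since $\omega_A$ is trivial the Serre functor of $\cdbc A$ is the central shift $[\dim A]$, so the adjoint of an equivalence coincides with its inverse up to this shift; computing the isomorphism attached to the adjoint functor produces $f_{\cplx K}^\dag$, and comparing it with the one attached to the inverse yields the defining relation $f^\dag=f^{-1}$ of $U(A\times\hat A)$.

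Next I would compute $\ker\gamma$. A point sheaf satisfies $\Upsilon(\mathcal O_x)=\{0\}\times\hat A$, and the functor carries the group $\Upsilon$ via $f_{\cplx K}$; hence if $f_{\cplx K}=\Id$ then $\Upsilon(\Phi(\mathcal O_x))=\{0\}\times\hat A$, which forces the semihomogeneous sheaf $\Phi(\mathcal O_x)$ to be a skyscraper. Thus $\Phi$ sends points to points, and Proposition \ref{p:trivial} yields $\Phi\simeq\bR f'_\ast(-\otimes\mathcal L)[n]$ for some morphism $f'\colon A\to A$, a line bundle $\mathcal L$ and $n\in\mathbb Z$; being an equivalence forces $f'\in\Aut(A)$ and $\bR f'_\ast=f'_\ast$. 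Imposing once more $f_{\cplx K}=\Id$ then forces $f'$ to be a translation $T_a$ and $\mathcal L$ to lie in $\Pic^0(A)=\hat A$. This exhibits $\ker\gamma$ as precisely the autoequivalences $\Phi^{(n,a,L)}$ of the statement, and a direct computation recovers the group law of $\mathbb Z\oplus(A\times\hat A)$.

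The remaining and hardest step is the surjectivity of $\gamma$ onto $U(A\times\hat A)$. Here the strategy is to present $U(A\times\hat A)$ by generators --- an abelian-variety analogue of the generation of a symplectic group by elementary matrices --- and to realise each generator by a standard equivalence: the block-diagonal isometries $\begin{pmatrix} g & 0\\ 0 & \hat g^{-1}\end{pmatrix}$ with $g\in\Aut(A)$ by the pushforward $g_\ast$; the lower-unipotent isometries $\begin{pmatrix} 1 & 0\\ \phi_{\mathcal M} & 1\end{pmatrix}$ attached to the symmetric homomorphism $\phi_{\mathcal M}\colon A\to\hat A$ of a line bundle $\mathcal M$ by the twist $-\otimes\mathcal M$; and the remaining antidiagonal generator by the Fourier--Mukai transform with kernel the Poincar\'e bundle $\mathcal P$ (which a priori lands in $\cdbc{\hat A}$ and must be composed with the relevant identifications to return to $\cdbc A$). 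I expect the genuine obstacle to be twofold: proving the generation statement for $U(A\times\hat A)$, and checking that each listed equivalence induces exactly the claimed generator. Once both are in place, surjectivity is immediate because $\gamma$ is a homomorphism and its kernel has already been described, and exactness of the whole sequence then follows by assembling these three computations.
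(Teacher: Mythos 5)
The paper does not prove this theorem itself; it quotes it from Orlov \cite[Theorem 4.14]{Or02}, so the relevant comparison is with Orlov's argument and with the relative versions the paper does prove. Your construction of the homomorphism $\gamma\colon\Phi^{\cplx K}\mapsto f_{\cplx K}$, the verification that it lands in $U(A\times\hat A)$ via Serre duality and adjoints, and the computation of the kernel (point sheaves go to point sheaves when $f_{\cplx K}=\Id$, then Proposition \ref{p:trivial}, then forcing $f'$ to be a translation and $\mathcal L\in\Pic^0(A)$) all match Orlov's Propositions 2.18 and 3.3 and are sound.

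The gap is in your surjectivity argument. You propose to generate $U(A\times\hat A)$ by block-diagonal isometries, lower-unipotent isometries $\begin{pmatrix}1&0\\ \phi_{\mathcal M}&1\end{pmatrix}$, and one antidiagonal element realised by the Poincar\'e kernel. First, the antidiagonal element $\begin{pmatrix}0&-\lambda^{-1}\\ \lambda&0\end{pmatrix}$ requires an isomorphism $\lambda\colon A\isom\hat A$, i.e.\ a principal polarization, which a general abelian variety does not possess; $\Phi^{\mathcal P}$ is an equivalence $\cdbc{\hat A}\to\cdbc{A}$, not an autoequivalence, and there is no canonical way to ``return to $\cdbc A$.'' Second, even granting such elements, no generation theorem for $U(A\times\hat A)$ by them is available: this group depends on the full arithmetic of $\End(A)$ and $\NS(A)$, and the upper-unipotent isometries $\begin{pmatrix}1&\psi\\0&1\end{pmatrix}$ with $\psi\colon\hat A\to A$ symmetric are not produced by your list. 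This is precisely why Orlov's proof of surjectivity proceeds differently: for $f$ in the subset $U_0$ (upper-right block an isogeny) he uses the Mumford map to translate $f$ into a rational N\'eron--Severi class, realises that class as the slope of a simple semihomogeneous vector bundle $\mathcal E$ on $A\times A$, and shows $\Phi^{\mathcal E}$ is an equivalence with $f_{\mathcal E}=f$; a general $f$ is then factored as $g\circ h$ with $g,h\in U_0$. The paper reproduces exactly this route in the relative setting in Proposition \ref{t:equivalent}, and reserves your generators-and-relations strategy for Theorem \ref{t:abelpol}, where the extra hypotheses $\End_T(X)=\mathbb Z$ and the existence of a principal polarization reduce $U(X\times_T\hat X)$ to $\SL(2,\mathbb Z)$, for which generation by two elementary matrices is classical. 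Without either the semihomogeneous-bundle construction or a proof of your generation claim, the surjectivity step does not go through.
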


 \subsection{Relative Fourier-Mukai transforms and abelian schemes}
 
 \quad
 
 \quad
 
In this section we are interested in studying the group of  relative Fourier-Mukai transforms of an abelian scheme. 
Let $p\colon X\to T$ be an abelian scheme over a scheme $T$, that is, a smooth proper  morphism with connected fibres such that there exist morphisms of $T$-schemes $$m_X\colon X\times_T X \to X,\quad\quad i\colon X\to X,\quad\quad e\colon T\to X,$$ corresponding respectively to the group law, inversion and unit section and satisfying the usual group relations. Consider $\hat{p}\colon \hat{X}=\Pic^0_{X/T}\to T $ the dual abelian scheme, whose closed points correspond to line bundles whose scheme-theoretic support is contained in some fibre of $p$ and belong to the connected component of the identity of the Picard group of that fibre. There is a Poincar\'e line bundle $\mathcal{P}$ over $X\times_T \hat{X}$  that we normalize by imposing that its restriction to ${e(T)\times_T \hat{X}}$ is trivial.

By \cite[Theorem 1.1]{Muk87b}, the relative integral functor  defined by the Poincar\'e bundle $$\Phi^{\mathcal{P}}\colon  \cdbc{\hat{X}}\to \cdbc{X}\, ,$$ is a Fourier-Mukai transform whose right adjoint $\Phi^{\mathcal{P}}_{\mathcal{R}}$ is the integral functor with kernel 
$\mathcal{P}^\ast \otimes \pi_1^\ast \omega_{\hat{X}/T}[g]$, where $\pi_1\colon \hat{X}\times_T X\to \hat{X}$  is the natural projection  and $g$ is the relative dimension of $p\colon X\to T$.

A coherent sheaf $\mathcal{F}$ on $X$ is said to be {\it relatively semihomogeneous} if it is flat over $T$ and for any $t\in T$ its restriction $\mathcal{F}_t$ to the fibre $X_t$ is a semihomogeneous sheaf.

From now on we  assume that the base scheme $T$ is connected.

\begin{prop} \label{t:haz}Let $p\colon X\to T$ and $q\colon Y\to T$ be two projective abelian schemes, and let $\Phi=\Phi^{\cplx{K}}\colon \cdbc{X}\isom \cdbc{Y}$ be a relative Fourier-Mukai transform. Then $\cplx{K}$ has only one non-trivial cohomology sheaf, that is, $\cplx{K}\simeq \mathcal{K}[n]$ for some coherent sheaf $\mathcal{K}$ on $X\times_T Y$  and some  $n\in \mathbb{Z}$. Moreover, the sheaf $\mathcal{K}$ is flat over $X$ and relatively semihomogenous over $T$.
\end{prop}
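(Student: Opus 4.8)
The plan is to reduce the statement to the fibrewise situation and then apply the gluing result of Proposition \ref{prop: ncte}. The key observation is that the two properties asserted for $\mathcal{K}$—being concentrated in one degree with a flat sheaf as the single cohomology, and being relatively semihomogeneous—are exactly the kind of properties that propagate from the fibres to the total space under the hypotheses already established. First I would invoke Proposition \ref{prop:relativeequiv}: since $p$ and $q$ are projective and $\Phi^{\cplx{K}}$ is a relative Fourier-Mukai transform, its kernel $\cplx{K}$ has finite homological dimension over both $X$ and $Y$, and the restriction $\Phi_t\colon \cdbc{X_t}\to \cdbc{Y_t}$ to every fibre is an absolute equivalence of derived categories of abelian varieties.

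Next I would apply the absolute results on abelian varieties to each fibre. By Proposition \ref{VarAb} (Orlov), for every closed point $t\in T$ the restricted kernel $\bL j_t^\ast\cplx{K}$ is concentrated in a single degree, say $\bL j_t^\ast\cplx{K}\simeq \mathcal{K}_t[n_t]$ with $\mathcal{K}_t$ a sheaf on $X_t\times Y_t$; and by Proposition \ref{flatness} this sheaf $\mathcal{K}_t$ is semihomogeneous and flat over both factors, in particular flat over $X_t$. With the flatness over $X_t$ and the single-degree concentration established fibrewise, the hypotheses of Proposition \ref{prop: ncte} are met (the abelian scheme $X$ over a connected base is connected, since $T$ is connected and the fibres are connected). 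That proposition then yields a single integer $n$ and a sheaf $\mathcal{K}$ on $X\times_T Y$, flat over $X$, with $\cplx{K}\simeq\mathcal{K}[n]$. This settles the concentration statement and flatness over $X$ simultaneously.

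It remains to upgrade flatness over $X$ to the relatively semihomogeneous condition, i.e. to check that $\mathcal{K}$ is flat over $T$ and that each fibre restriction $\mathcal{K}_t$ is semihomogeneous. The semihomogeneity of each $\mathcal{K}_t$ is immediate from Proposition \ref{flatness} applied to the fibre equivalence $\Phi_t$, once I identify the restriction of the now-established sheaf $\mathcal{K}$ to the fibre over $t$ with the kernel $\mathcal{K}_t$ of $\Phi_t$; this identification is guaranteed by the base change compatibility of Proposition \ref{prop:relativeequiv} together with the fact that $\bL j_t^\ast(\mathcal{K}[n])\simeq \mathcal{K}_t[n]$ is concentrated (so the derived restriction is an ordinary restriction). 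For flatness over $T$, I would argue that $\mathcal{K}$ is flat over $X$ and that the composition $X\times_T Y\to X\to T$ is flat ($p$ being smooth), so flatness of $\mathcal{K}$ relative to $T$ follows from transitivity of flatness; alternatively one checks directly that $\mathcal{K}$ restricted to each fibre of $\rho\colon X\times_T Y\to T$ is the flat sheaf $\mathcal{K}_t$ and applies the fibrewise flatness criterion.

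The main obstacle I anticipate is the last point: namely, making rigorous the passage from flatness of $\mathcal{K}$ over $X$ to flatness over $T$ and ensuring the fibre $\mathcal{K}_t$ genuinely coincides with Orlov's kernel rather than merely with a derived-restriction that happens to be concentrated. The delicate part is verifying that no higher $\Tor$ sheaves appear when restricting $\mathcal{K}$ to a fibre $X_t\times Y_t$, so that $\bL j_t^\ast\mathcal{K}$ really is the honest sheaf $\mathcal{K}_t$; this is where I would lean on the flatness of $\mathcal{K}$ over $X$ combined with the base-change compatibility already packaged in Proposition \ref{prop:relativeequiv}, and on the flatness of the structural morphisms. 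Everything else is a routine assembly of the cited fibrewise statements through the gluing mechanism of Proposition \ref{prop: ncte}.
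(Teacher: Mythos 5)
Your proposal is correct and follows essentially the same route as the paper: restrict to fibres via Proposition \ref{prop:relativeequiv}, invoke Propositions \ref{VarAb} and \ref{flatness} to get shifted flat semihomogeneous kernels on each fibre, and glue with Proposition \ref{prop: ncte} using connectedness of $X$. Your extra care in deducing flatness over $T$ from flatness over $X$ (via flatness of $p$) and in identifying $j_t^\ast\mathcal{K}$ with Orlov's fibre kernel is sound and in fact spells out a step the paper leaves implicit.
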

\begin{proof}
By Proposition \ref{prop:relativeequiv}, all the absolute integral functors  $\Phi_t\colon \cdbc{X_t}\to \cdbc{Y_t}$ induced on the fibres  are  equivalences because of the projectivity of the morphisms.  Thus, Propositions \ref{VarAb} and \ref{flatness} prove that for every  $t\in T$ one has $\bL j_t^\ast \cplx{K}\simeq \mathcal{K}_t[n_t]$, where $\mathcal{K}_t$ is a sheaf on $X_t\times Y_t$ flat over both factors and semihomogeneous. Since we are assuming that $T$ is connected, it follows that $X$ is connected as well and we finish by Proposition \ref{prop: ncte}.
\end{proof}

Let $p\colon X\to T$ and $q\colon Y\to T$ be two projective abelian schemes.  Proposition \ref{p:exterior} allows us to generalise Definition 9.34 in \cite{Huybbook} to the case of abelian schemes.
\begin{defin} \label{d:equivprod}To any relative Fourier-Mukai transform $\Phi^{\cplx{K}}\colon \cdbc{X}\to \cdbc{Y}$ we associate the relative Fourier-Mukai transform $$\Psi^{\cplx{K}}\colon \cdbc{X\times_T \hat{X}}\to\cdbc{Y\times_T\hat{Y}}\, ,$$ defined as the composition

\begin{equation*}
    \xymatrix@C=4.4pc{
        \cdbc{X\times_T\hat{X}} \ar[r]^{\Psi^{\cplx{K}}} \ar[d]_{id\times \Phi^{\mathcal{P}_X}} &  \cdbc{Y\times_T\hat{Y}} \\
        \cdbc{X\times_T X} \ar[d]_{\mu_{X\ast}}  &  \cdbc{Y\times_TY}\ar[u]_{(id\times \Phi^{\mathcal{P}_Y})^{-1}}\\
        \cdbc{X\times_T X} \ar[r]^{\Phi^{\cplx{K}}\times \Phi^{\cplx{K}}_\mathcal{R}} & \cdbc{Y\times_TY}\,,\ar[u]_{\mu_{Y\ast}}
        }
    \end{equation*}
where $\mathcal{P}_X$ and $\mathcal{P}_Y$ are the Poincar\'e bundles for $X$ and $Y$ respectively, $\mu_X$ is the relative automorphism
$$\begin{aligned}\mu_X=m_X\times Id\colon & X\times_T X\to X\times_T X\\
&(x_1,x_2)\mapsto (x_1+x_2,x_2)\, ,
\end{aligned}$$ and $\Phi^{\cplx{K}}_\mathcal{R}$ is the right adjoint to $\Phi^{\cplx{K}}$ considered as a functor from $\cdbc{X}$ to $\cdbc{Y}$.

\end{defin}

\begin{lem} The construction $\Phi^{\cplx{K}}\to \Psi^{\cplx{K}}$ is compatible with  composition, that is, given $X, Y,$ and $ Z$ projective abelian schemes over $T$ and two relative Fourier-Mukai transforms,
$$\Phi^{\cplx{K}}\colon \cdbc{X}\isom\cdbc{Y}, \quad\quad \Phi^{\cplx{R}}\colon\cdbc{Y}\isom\cdbc{Z},$$  then one has that $\Psi^{\cplx{R}\ast_T\cplx{K}}\simeq \Psi^{\cplx{R}}\circ\Psi^{\cplx{K}}$.
\end{lem}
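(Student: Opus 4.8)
The plan is to reduce the asserted isomorphism to a purely formal identity among composites of relative integral functors, and then to discharge the single non-formal ingredient by a group-law computation on the abelian schemes. Throughout I abbreviate the three building blocks of Definition \ref{d:equivprod} by $\mathbf{P}_X:=\Id\times\Phi^{\mathcal{P}_X}$, $M_X:=\mu_{X\ast}$ and $\Theta^{\cplx{K}}:=\Phi^{\cplx{K}}\times\Phi^{\cplx{K}}_{\mathcal{R}}$, so that $\Psi^{\cplx{K}}=\mathbf{P}_Y^{-1}\circ M_Y\circ\Theta^{\cplx{K}}\circ M_X\circ\mathbf{P}_X$, and likewise $\Psi^{\cplx{R}}=\mathbf{P}_Z^{-1}\circ M_Z\circ\Theta^{\cplx{R}}\circ M_Y\circ\mathbf{P}_Y$. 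I will lean on three formal facts: (i) the composition formula \eqref{e:compos}, giving $\Phi^{\cplx{R}\ast_T\cplx{K}}\simeq\Phi^{\cplx{R}}\circ\Phi^{\cplx{K}}$; (ii) the right adjoint of a composition is the composition of the right adjoints in the reverse order, $(\Phi^{\cplx{R}}\circ\Phi^{\cplx{K}})_{\mathcal{R}}\simeq\Phi^{\cplx{K}}_{\mathcal{R}}\circ\Phi^{\cplx{R}}_{\mathcal{R}}$ (here the fibres being abelian, hence of trivial relative dualizing sheaf, guarantees that no Serre twist enters and that the adjoints are again relative integral functors); and (iii) the compatibility of exterior products with composition underlying Proposition \ref{p:exterior}, namely $(F_2\circ F_1)\times(G_2\circ G_1)\simeq(F_2\times G_2)\circ(F_1\times G_1)$.

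First I would expand $\Psi^{\cplx{R}}\circ\Psi^{\cplx{K}}$ and cancel the adjacent pair $\mathbf{P}_Y\circ\mathbf{P}_Y^{-1}=\Id$ arising from the end of $\Psi^{\cplx{K}}$ and the start of $\Psi^{\cplx{R}}$, leaving
$$\Psi^{\cplx{R}}\circ\Psi^{\cplx{K}}\simeq\mathbf{P}_Z^{-1}\circ M_Z\circ\Theta^{\cplx{R}}\circ M_Y\circ M_Y\circ\Theta^{\cplx{K}}\circ M_X\circ\mathbf{P}_X\,.$$
On the other hand, combining (i), (ii) and (iii), the product functor attached to the convolved kernel factors as
$$\Theta^{\cplx{R}\ast_T\cplx{K}}\simeq(\Phi^{\cplx{R}}\times\Phi^{\cplx{K}}_{\mathcal{R}})\circ(\Phi^{\cplx{K}}\times\Phi^{\cplx{R}}_{\mathcal{R}})\,,$$
so that $\Psi^{\cplx{R}\ast_T\cplx{K}}\simeq\mathbf{P}_Z^{-1}\circ M_Z\circ\Theta^{\cplx{R}\ast_T\cplx{K}}\circ M_X\circ\mathbf{P}_X$. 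Since conjugation by the equivalences $\mathbf{P}_Z^{-1}\circ M_Z\circ(-)\circ M_X\circ\mathbf{P}_X$ is faithful, comparing the two right-hand sides reduces the lemma to the single intertwining identity
$$\Theta^{\cplx{R}}\circ M_Y\circ M_Y\circ\Theta^{\cplx{K}}\simeq\Theta^{\cplx{R}\ast_T\cplx{K}}$$
on $\cdbc{Y\times_T Y}$; that is, to showing that the two insertions of the addition automorphism $\mu_Y$ exactly reorganise the four outer factors into the crossed composite on the right, where the order of the two adjoint factors has been interchanged.

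The main obstacle is precisely this intertwining identity: it is the only point at which the relative group structure of the abelian schemes, rather than formal kernel bookkeeping, is used. The content is that $\Phi^{\cplx{K}}$ and its adjoint are compatible with the relative group law in the sense governed by \eqref{e:traslacion}, and that the two copies of $\mu_{Y\ast}$ absorb the resulting discrepancy through the Poincar\'e bundle encoded in $\mathbf{P}_X,\mathbf{P}_Y$. I would establish it directly, mirroring the absolute computation of \cite{Huybbook}: convolve the kernels of $\Theta^{\cplx{R}}$, $M_Y\circ M_Y$ and $\Theta^{\cplx{K}}$ (resp.\ of the crossed composite) via \eqref{e:compos} and simplify using the see-saw property of $\mathcal{P}$ together with \eqref{e:traslacion}. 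Every functor in sight is a $T$-linear relative integral functor whose kernel is a shifted flat sheaf (Proposition \ref{t:haz}), and both \eqref{e:compos} and Proposition \ref{p:exterior} hold verbatim over $T$, so the absolute argument transfers unchanged once ordinary products are replaced by fibre products over $T$.

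Alternatively, and perhaps more cleanly, I would verify the intertwining identity fibrewise. Both sides are relative integral functors $\cdbc{X\times_T X}\to\cdbc{Z\times_T Z}$, so it suffices to compare their kernels; by the base-change formula \eqref{e:basechange} these restrict on each fibre to the kernels of the corresponding absolute functors, for which the identity is exactly the abelian-variety computation of \cite{Huybbook}. Since the kernels are shifted flat sheaves, the fibrewise agreement propagates to a global isomorphism by the rigidity mechanism of Proposition \ref{prop: ncte}, yielding $\Psi^{\cplx{R}\ast_T\cplx{K}}\simeq\Psi^{\cplx{R}}\circ\Psi^{\cplx{K}}$.
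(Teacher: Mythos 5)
Your opening reduction---expand both $\Psi$'s and cancel the adjacent pair $\mathbf{P}_Y\circ\mathbf{P}_Y^{-1}$---is exactly how the paper proceeds, but the paper's proof is entirely formal and ends there: the only input is the identity $(\Phi^{\cplx{R_1}}\times \Phi^{\cplx{R_2}})\circ(\Phi^{\cplx{K_1}}\times \Phi^{\cplx{K_2}})\simeq \Phi^{\cplx{R_1}\ast_T\cplx{K_1}}\times \Phi^{\cplx{R_2}\ast_T\cplx{K_2}}$, which follows from the convolution formula \eqref{e:compos}. The ``main obstacle'' you isolate comes from two misreadings of Definition \ref{d:equivprod}. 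First, $\Phi^{\cplx{K}}_{\mathcal R}$ there has to be the integral functor $\cdbc{X}\to\cdbc{Y}$ defined by the kernel of the right adjoint (otherwise $\Phi^{\cplx{K}}\times\Phi^{\cplx{K}}_{\mathcal R}$ would not map $\cdbc{X\times_T X}$ to $\cdbc{Y\times_T Y}$ as the diagram requires). With that reading the kernel $(\cplx{R}\ast_T\cplx{K})_{\mathcal R}$ is the convolution of $\cplx{R}_{\mathcal R}$ with $\cplx{K}_{\mathcal R}$ taken in the \emph{same} forward direction, so $\Theta^{\cplx{R}\ast_T\cplx{K}}\simeq\Theta^{\cplx{R}}\circ\Theta^{\cplx{K}}$ with no interchange of factors; your ``crossed'' composite $(\Phi^{\cplx{R}}\times\Phi^{\cplx{K}}_{\mathcal{R}})\circ(\Phi^{\cplx{K}}\times\Phi^{\cplx{R}}_{\mathcal{R}})$ does not even typecheck as a chain of functors, under either reading of the subscript ${\mathcal R}$. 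Second, the two middle pushforwards along $\mu_Y$ are mutually inverse in the composition (this is how the construction being transplanted from Definition 9.34 of \cite{Huybbook} is arranged), so nothing survives between $\Theta^{\cplx{R}}$ and $\Theta^{\cplx{K}}$ and there is no residual intertwining identity requiring the see-saw property or the group law.

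The more serious defect is your fallback argument. Proposition \ref{prop: ncte} asserts that a kernel whose restriction to every fibre is a shifted flat sheaf is itself a shifted flat sheaf; it is not a rigidity or uniqueness statement, and it cannot upgrade a fibrewise isomorphism between two kernels to a global isomorphism. Two objects of $\cdbc{Z\times_T Z}$ can restrict isomorphically to every fibre of $\rho$ and still differ, for instance by a twist pulled back from $T$. This is precisely why the paper, whenever it does need to globalize a fibrewise identity (isometry of $f_{\cplx{K}}$, the computation of $\ker\gamma_X$), passes to morphisms of abelian schemes and invokes Lemma \ref{l:rigid}, rather than comparing kernels fibrewise. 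So the ``verify on fibres, then rigidify via Proposition \ref{prop: ncte}'' route would not close the argument as written.
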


\begin{proof} Since $\Psi$ is defined as  composition of relative Fourier-Mukai transforms, the statement is obtained directly from
$$(\Phi^{\cplx{R_1}}\times \Phi^{\cplx{R_2}})\circ(\Phi^{\cplx{K_1}}\times \Phi^{\cplx{K_2}})\simeq \Phi^{\cplx{R_1}\ast_T\cplx{K_1}}\times \Phi^{\cplx{R_2}\ast_T\cplx{K_2}}\, ,$$ which follows from Equation \eqref{e:compos} describing the kernel of the composition of two relative integral functors.
\end{proof}
Since  $\Phi^{\cplx{K}}=Id$ implies $\Psi^{\cplx{K}}=Id$, from the above Lemma we get the following result.
\begin{cor} The map  
$$\begin{aligned} \Psi\colon  \FM_T(\cdbc{X}) &\to \FM_T(\cdbc{X\times_T\hat{X}})\\
\Phi^{\cplx{K}}&\mapsto \Psi^{\cplx{K}}\, ,
\end{aligned}$$ is a group morphism.\qed
\end{cor}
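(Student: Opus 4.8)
The plan is to observe that on both sides the group operation is composition of autoequivalences, so that establishing that $\Psi$ is a group morphism reduces to two checks: that $\Psi$ genuinely lands in $\FM_T(\cdbc{X\times_T\hat{X}})$, and that it converts composition into composition, from which preservation of the unit and hence the full homomorphism property will follow formally.

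First I would confirm that $\Psi$ is a well-defined map into $\FM_T(\cdbc{X\times_T\hat{X}})$. By Definition \ref{d:equivprod} the functor $\Psi^{\cplx{K}}$ is built as a composition of the five functors $id\times\Phi^{\mathcal{P}_X}$, $\mu_{X\ast}$, $\Phi^{\cplx{K}}\times\Phi^{\cplx{K}}_\mathcal{R}$, $\mu_{Y\ast}$ and $(id\times\Phi^{\mathcal{P}_Y})^{-1}$. Each of these is itself a relative Fourier-Mukai transform: the pieces involving the Poincar\'e bundles are equivalences by \cite[Theorem 1.1]{Muk87b}, the functors $\mu_{X\ast}$ and $\mu_{Y\ast}$ come from relative automorphisms, and the middle exterior product $\Phi^{\cplx{K}}\times\Phi^{\cplx{K}}_\mathcal{R}$ is an equivalence by Proposition \ref{p:exterior}, since both $\Phi^{\cplx{K}}$ and its right adjoint $\Phi^{\cplx{K}}_\mathcal{R}$ are equivalences. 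I would also note that $\Psi^{\cplx{K}}$ depends only on the autoequivalence $\Phi^{\cplx{K}}$ and not on a particular choice of kernel representing it, since the construction invokes only the functor $\Phi^{\cplx{K}}$ and its canonically determined right adjoint.

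Next I would deduce the homomorphism property from the preceding Lemma. Writing $\cplx{R}\ast_T\cplx{K}$ for the convolution kernel that represents the composite $\Phi^{\cplx{R}}\circ\Phi^{\cplx{K}}$ via Equation \eqref{e:compos}, the Lemma gives $\Psi^{\cplx{R}\ast_T\cplx{K}}\simeq\Psi^{\cplx{R}}\circ\Psi^{\cplx{K}}$, which is exactly the assertion that $\Psi$ respects composition. Since both $\FM_T(\cdbc{X})$ and $\FM_T(\cdbc{X\times_T\hat{X}})$ are groups, any map of sets between them respecting the operation automatically carries the identity to the identity; equivalently, one checks directly that the identity functor, with kernel $\cO_\Delta$, is sent to the identity. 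I expect essentially no hard step here: all of the substance already lives in the Lemma and in the fact, secured by Proposition \ref{p:exterior}, that the middle factor of Definition \ref{d:equivprod} is an equivalence. The only point worth underlining is that the group law in $\FM_T$ is realized concretely by the kernel convolution \eqref{e:compos}, which is precisely what lets the functorial identity of the Lemma be read as the algebraic statement that $\Psi$ is a group morphism.
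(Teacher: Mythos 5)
Your proposal is correct and follows essentially the same route as the paper: the homomorphism property is read off from the preceding Lemma ($\Psi^{\cplx{R}\ast_T\cplx{K}}\simeq\Psi^{\cplx{R}}\circ\Psi^{\cplx{K}}$), together with the observation that the identity is sent to the identity, while well-definedness is already secured by Definition \ref{d:equivprod} and Proposition \ref{p:exterior}. The extra care you take about independence of the choice of kernel and about each factor being an equivalence is sound but adds nothing beyond what the paper's construction already guarantees.
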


\begin{prop}\label{p:equivprod} Let $\Phi^{\cplx{K}}\colon \cdbc{X}\to \cdbc{Y}$ be a relative Fourier-Mukai transform between the derived categories of two projective abelian schemes $X$ and $Y$ over $T$. Then, the relative Fourier-Mukai transform $$\Psi^{\cplx{K}}\colon \cdbc{X\times_T\hat{X}}\to \cdbc{Y\times_T\hat{Y}},$$ is given by $$\Psi^{\cplx{K}}\simeq(\mathcal{L}_{\cplx{K}}\otimes (-))\circ f_{\cplx{K}\ast}\, ,$$ where $\mathcal{L}_{\cplx{K}}\in \Pic(Y\times_T\hat{Y})$ and $f_{\cplx{K}}\colon X\times_T\hat{X}\isom Y\times_T \hat{Y}$ is an isomorphism of abelian schemes over $T$.
\end{prop}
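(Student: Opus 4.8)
The plan is to verify the hypothesis of Proposition \ref{p:trivial} for $\Psi^{\cplx{K}}$ and then read off the stated form. First I would note that $\Psi^{\cplx{K}}$ is itself an equivalence: by Definition \ref{d:equivprod} it is a composition of relative automorphisms, relative Poincar\'e transforms, and the exterior product $\Phi^{\cplx{K}}\times\Phi^{\cplx{K}}_{\mathcal R}$, which is an equivalence by Proposition \ref{p:exterior}; hence it is a relative Fourier-Mukai transform. By the projectivity of the abelian schemes its kernel has finite homological dimension over both factors (\cite[Proposition 2.10]{HLS08}), so Proposition \ref{p:trivial} is applicable. It therefore suffices to show that $\Psi^{\cplx{K}}$ carries every skyscraper sheaf $\cO_z$, with $z\in X\times_T\hat X$ a closed point, to a skyscraper sheaf on $Y\times_T\hat Y$. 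Proposition \ref{p:trivial} then produces a relative morphism $f_{\cplx{K}}\colon X\times_T\hat X\to Y\times_T\hat Y$ and a line bundle $\mathcal L$ with $\Psi^{\cplx{K}}\simeq\bR f_{\cplx{K}\ast}(\mathcal L\otimes(-))$. Since $\Psi^{\cplx{K}}$ is an equivalence, $f_{\cplx{K}}$ must be an isomorphism, so $\bR f_{\cplx{K}\ast}=f_{\cplx{K}\ast}$ and, by the projection formula, $\Psi^{\cplx{K}}\simeq(\mathcal L_{\cplx{K}}\otimes(-))\circ f_{\cplx{K}\ast}$ with $\mathcal L_{\cplx{K}}=f_{\cplx{K}\ast}\mathcal L\in\Pic(Y\times_T\hat Y)$.

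To establish the skyscraper condition I would argue fibrewise and then globalise. The construction $\Phi^{\cplx{K}}\mapsto\Psi^{\cplx{K}}$ commutes with restriction to the fibre over $t\in T$: applying the base change formula \eqref{e:basechange} to each of the four functors composing $\Psi^{\cplx{K}}$ --- the relative Poincar\'e transform $\Phi^{\mathcal P_X}$ restricts to the absolute one on $X_t$, $\mu_{X\ast}$ restricts to $\mu_{X_t\ast}$, the adjoint $\Phi^{\cplx{K}}_{\mathcal R}$ restricts to $(\Phi_t)_{\mathcal R}$ by \cite[Proposition 2.9]{HLS08}, and likewise for $Y$ --- one obtains $\Psi^{\cplx{K}}_t\simeq\Psi_{\Phi_t}$, the absolute associated transform (Definition 9.34 in \cite{Huybbook}) of the fibre equivalence $\Phi_t$. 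As $X_t$ and $Y_t$ are abelian varieties, the absolute theory applies: by Orlov's Theorem \ref{thm:sequenceav} and the characterisation of the isometry through Equation \eqref{e:traslacion}, the transform $\Psi_{\Phi_t}$ has the trivial form $(\mathcal L_t\otimes(-))\circ f_{\Phi_t\ast}$ for an isometric isomorphism $f_{\Phi_t}$ of abelian varieties. In particular $\Psi^{\cplx{K}}_t$ sends every skyscraper sheaf to a skyscraper sheaf concentrated in degree zero.

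Now for the globalisation. For a closed point $z\in X\times_T\hat X$ over $t$, base change \eqref{e:basechange} gives $\bL i_t^\ast\Psi^{\cplx{K}}(\cO_z)\simeq\Psi^{\cplx{K}}_t(\cO_z)=\cO_w$, a single skyscraper, so each $\cO_z$ is WIT$_0$ for $\Psi^{\cplx{K}}$ on its fibre. Exactly as in the proof of Proposition \ref{prop: ncte}, the space $X\times_T\hat X$ is connected (being an abelian scheme over the connected base $T$) and the openness statement of Corollary \ref{prop:skyscraper} forces the WIT index to be globally equal to $0$; then \cite[Lemma 4.3]{Bri99} shows that $\Psi^{\cplx{K}}(\cO_z)$ is a genuine sheaf, and having fibre length one it is a skyscraper $\cO_w$. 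This supplies the hypothesis needed above. I expect this step to be the main obstacle: the hard part is propagating the fibrewise triviality across the connected base with no jump of the WIT index, which is precisely what the preliminary results of Section 1 are designed to control.

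It remains to check that $f_{\cplx{K}}$ is an isomorphism of abelian schemes, not merely of schemes. By base change its restriction to $X_t$ is $f_{\Phi_t}$, which fixes the origin; hence $f_{\cplx{K}}$ maps the zero section of $X\times_T\hat X$ into that of $Y\times_T\hat Y$, and by rigidity a relative morphism of abelian schemes carrying the zero section to the zero section is a homomorphism. Being fibrewise an isomorphism, $f_{\cplx{K}}$ is then an isomorphism of abelian schemes over $T$, which completes the argument.
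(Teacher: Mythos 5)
Your proposal follows essentially the same route as the paper: restrict to the fibres, identify $(\Psi^{\cplx{K}})_t$ with Orlov's absolute associated transform of $\Phi^{\cplx{K}_t}$, conclude that skyscraper sheaves go to skyscraper sheaves, and then invoke Proposition \ref{p:trivial}. One step, however, is stated incorrectly, even though the conclusion survives: for a closed point $z$ in the fibre over $t$, formula \eqref{e:basechange} gives $\bL i_t^\ast\Psi^{\cplx{K}}(\cO_z)\simeq\Psi^{\cplx{K}}_t(\bL i_t^\ast\cO_z)$, and $\bL i_t^\ast\cO_z$ is not $\cO_z$ but a Koszul-type complex with several non-zero cohomology sheaves (the fibre inclusion is not transverse to $z$), so you cannot read off $\Psi^{\cplx{K}}(\cO_z)$ from this identity. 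The correct tool is the direct-image compatibility \eqref{e:directimage}: since $\cO_z=i_{t\ast}\cO_z$, one gets $\Psi^{\cplx{K}}(\cO_z)\simeq i_{t\ast}\bigl(\Psi^{\cplx{K}}_t(\cO_z)\bigr)=\cO_w$ in one line, which is exactly what the paper does. This also renders your WIT-index/openness/connectedness apparatus unnecessary: Orlov's theorem already places the image in degree zero on every fibre, so there is no index to propagate, and the step you flag as ``the main obstacle'' is in fact immediate. The remaining points --- that $\Psi^{\cplx{K}}$ is an equivalence, that $f_{\cplx{K}}$ is therefore an isomorphism, and that it is a morphism of abelian schemes because its restriction to each fibre is --- agree with the paper's argument.
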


\begin{proof} 
Let $t\in T$ be a closed point and  let $$\Psi^{\cplx{K}_t}\colon \cdbc{X_t\times \hat{X}_t}\isom \cdbc{Y_t\times \hat{Y}_t}\, , $$ be  the equivalence associated to $\Phi^{\cplx{K}_t}\colon \cdbc{X_t}\to \cdbc{Y_t}$ as in Definition \ref{d:equivprod} (see also Definition 9.34 in \cite{Huybbook}). By the very definition of $\Psi$ one finds that $(\Psi^{\cplx{K}})_t\simeq\Psi^{\cplx{K}_t}$. By \cite[Theorem 2.10]{Or02}, we have that $(\Psi^{\cplx{K}})_t$ sends skyscraper sheaves to skyscraper sheaves.  Since $\Psi^{\cplx{K}}$ is a relative Fourier-Mukai transform, from its compatibility with direct images given in Equation \eqref{e:directimage}, we deduce that $\Psi^{\cplx{K}}$ also sends skyscraper sheaves to skyscraper sheaves. Then using Proposition \ref{p:trivial}, one obtains that $$\Psi^{\cplx{K}}\simeq(\mathcal{L}_{\cplx{K}}\otimes (-))\circ f_{\cplx{K}\ast}\, .$$  Since $\Psi^{\cplx{K}}$ is an equivalence, we conclude that $f_{\cplx{K}}$ is an isomorphism. Moreover, since $$(\Psi^{\cplx{K}})_t\simeq (\mathcal{L}_\cplx{K}|_{Y_t\times \hat{Y}_t}\otimes (-))\circ f_{\cplx{K}\ast}|_{X_t\times \hat{X}_t}\, ,$$ and by  \cite[Theorem 2.10]{Or02} we know that $f_{\cplx{K}}|_{X_t\times \hat{X}_t}\colon X_t\times \hat{X}_t\to Y_t\times \hat{Y}_t$ is a morphism of abelian varieties for any closed point $t\in T$, one concludes that $f_{\cplx{K}}\colon X\times_T\hat{X}\isom Y\times_T \hat{Y}$ is an isomorphism of abelian schemes over $T$.
\end{proof}

\begin{cor} If $X\to T$ is a projective abelian scheme, the map 
$$\begin{aligned} \gamma_X\colon \FM_T(\cdbc{X}) &\to \Aut_T(X\times_T\hat{X})\\
\Phi^{\cplx{K}}&\mapsto f_{\cplx{K}} \, ,
\end{aligned}$$ is a group morphism.\qed
\end{cor}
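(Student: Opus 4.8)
The plan is to reduce the statement to the multiplicativity of the assignment $\Psi^{\cplx{K}}\mapsto f_{\cplx{K}}$, exploiting the group morphism $\Psi\colon\FM_T(\cdbc{X})\to\FM_T(\cdbc{X\times_T\hat{X}})$ from the preceding corollary together with the explicit form of $\Psi^{\cplx{K}}$ supplied by Proposition \ref{p:equivprod}. Concretely, given $\Phi^{\cplx{K}},\Phi^{\cplx{R}}\in\FM_T(\cdbc{X})$, their composite is $\Phi^{\cplx{R}}\circ\Phi^{\cplx{K}}\simeq\Phi^{\cplx{R}\ast_T\cplx{K}}$ by the convolution formula \eqref{e:compos}, so the homomorphism property $\gamma_X(\Phi^{\cplx{R}}\circ\Phi^{\cplx{K}})=\gamma_X(\Phi^{\cplx{R}})\circ\gamma_X(\Phi^{\cplx{K}})$ is exactly the identity $f_{\cplx{R}\ast_T\cplx{K}}=f_{\cplx{R}}\circ f_{\cplx{K}}$ of automorphisms of $X\times_T\hat{X}$.

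First I would compute the composite $\Psi^{\cplx{R}}\circ\Psi^{\cplx{K}}$ using Proposition \ref{p:equivprod}, writing $\Psi^{\cplx{K}}\simeq(\mathcal{L}_{\cplx{K}}\otimes(-))\circ f_{\cplx{K}\ast}$ and likewise for $\cplx{R}$. Because $f_{\cplx{R}}$ is an isomorphism, $f_{\cplx{R}\ast}=(f_{\cplx{R}}^{-1})^\ast$ is a tensor functor, so by the projection formula $f_{\cplx{R}\ast}(\mathcal{L}_{\cplx{K}}\otimes(-))\simeq(f_{\cplx{R}\ast}\mathcal{L}_{\cplx{K}})\otimes f_{\cplx{R}\ast}(-)$. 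Pushing the line-bundle twist past $f_{\cplx{R}\ast}$ this way, and using functoriality of pushforward, gives
$$\Psi^{\cplx{R}}\circ\Psi^{\cplx{K}}\simeq\big((\mathcal{L}_{\cplx{R}}\otimes f_{\cplx{R}\ast}\mathcal{L}_{\cplx{K}})\otimes(-)\big)\circ(f_{\cplx{R}}\circ f_{\cplx{K}})_\ast\,.$$
On the other hand, since $\Psi$ is a group morphism we have $\Psi^{\cplx{R}\ast_T\cplx{K}}\simeq\Psi^{\cplx{R}}\circ\Psi^{\cplx{K}}$, and Proposition \ref{p:equivprod} also expresses this same equivalence as $(\mathcal{L}_{\cplx{R}\ast_T\cplx{K}}\otimes(-))\circ f_{\cplx{R}\ast_T\cplx{K}\ast}$.

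It remains to compare the two decompositions of this single equivalence and isolate the geometric factor, which is the crux of the argument. The point is that the isomorphism part of a functor of the form $(\mathcal{L}\otimes(-))\circ g_\ast$ is intrinsically determined: such a functor sends the skyscraper sheaf $\mathcal{O}_z$ to a sheaf supported at $g(z)$, so the closed-point map of $g$ --- and hence $g$ itself as a $T$-morphism of the abelian schemes --- is recovered as the map induced on the supports of the images of skyscrapers, independently of the twisting line bundle. Applying this to both descriptions forces $f_{\cplx{R}\ast_T\cplx{K}}=f_{\cplx{R}}\circ f_{\cplx{K}}$ (and incidentally $\mathcal{L}_{\cplx{R}\ast_T\cplx{K}}\simeq\mathcal{L}_{\cplx{R}}\otimes f_{\cplx{R}\ast}\mathcal{L}_{\cplx{K}}$), which is the desired multiplicativity. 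The identity is handled trivially, since $\Phi^{\cplx{K}}=\Id$ yields $\Psi^{\cplx{K}}=\Id$ and hence $f_{\cplx{K}}=\Id$. I expect the main obstacle to be precisely establishing this uniqueness, i.e.\ that $f_{\cplx{K}}$ is a well-defined function of the equivalence $\Psi^{\cplx{K}}$ and can be cleanly separated from the line-bundle twist; once that is in place, the computation above closes the proof.
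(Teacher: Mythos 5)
Your proposal is correct and follows exactly the route the paper intends: the corollary is stated with no written proof because it is meant to be immediate from the fact that $\Phi^{\cplx{K}}\mapsto\Psi^{\cplx{K}}$ is a group morphism together with the decomposition $\Psi^{\cplx{K}}\simeq(\mathcal{L}_{\cplx{K}}\otimes(-))\circ f_{\cplx{K}\ast}$ of Proposition \ref{p:equivprod}, and your skyscraper-support argument correctly supplies the one point the paper leaves implicit, namely that $f_{\cplx{K}}$ is uniquely recoverable from the equivalence $\Psi^{\cplx{K}}$ so that comparing the two decompositions of $\Psi^{\cplx{R}}\circ\Psi^{\cplx{K}}$ yields $f_{\cplx{R}\ast_T\cplx{K}}=f_{\cplx{R}}\circ f_{\cplx{K}}$.
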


We can associate to any relative morphism $f\colon X\times_T\hat{X}\to Y\times_T \hat{Y}$ a matrix
 $\begin{pmatrix} \alpha &\beta\\
 \gamma & \delta
\end{pmatrix}$ 
where $\alpha\colon X\to Y$, $\beta\colon \hat{X}\to Y$, $\gamma\colon X\to \hat{Y}$ and $\delta\colon \hat{X}\to \hat{Y}$ are morphisms over $T$. If moreover $f$ is an isomorphism, it determines another isomorphism
$$f^\dag\colon Y\times_T \hat{Y}\isom X\times_T\hat{X}\, ,$$ whose matrix is $\begin{pmatrix} \hat{\delta}&-\hat{\beta}\\
-\hat{\gamma} & \hat{\alpha}
\end{pmatrix}$.
Following Mukai  \cite{Muk78}, Polishchuk \cite{Pol96} and Orlov \cite{Or02} we consider the following: 
\begin{defin}\label{d:isometric} An isomorphism of abelian schemes $f\colon X\times_T\hat{X}\isom Y\times_T \hat{Y}$ over $T$ is said to be  isometric if $f^\dag=f^{-1}$. We denote by $U(X\times_T\hat{X}, Y\times_T\hat{Y})$ the subgroup of isometric isomorphisms $f\colon X\times_T\hat{X}\isom Y\times_T \hat{Y}$. When $X=Y$, it is denoted by $U(X\times_T\hat{X})$.

\end{defin} 

The following result is a consequence of the Rigidity Lemma \cite[Corollary 6.2]{GIT}, see Proposition 3.5.1 in \cite{BL10}.

\begin{lem}\label{l:rigid} Let $X\to T$, $Y\to T$ be abelian schemes over a connected base $T$. The restriction map $\Hom_T(X,Y)\to \Hom(X_t, Y_t)$ is injective for any $t\in T$.
\end{lem}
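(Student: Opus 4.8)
The plan is to exploit that $\Hom_T(X,Y)$ is an abelian group under the fibrewise group law of $Y$ and that restriction to the fibre is a group homomorphism, so that injectivity is equivalent to triviality of the kernel. Thus I would begin with a homomorphism of abelian schemes $f\colon X\to Y$ over $T$, with structure maps $p\colon X\to T$ and $q\colon Y\to T$ and identity section $e_Y\colon T\to Y$, whose restriction $f_t\colon X_t\to Y_t$ is the zero homomorphism for some $t\in T$, and show that $f$ itself is zero, i.e. $f=e_Y\circ p$. The whole argument reduces to proving that the locus
$$W=\{\,s\in T : f_s=0\,\}$$
is open, closed and nonempty; since $T$ is connected and $t\in W$, this forces $W=T$ and hence $f=0$. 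The nonemptiness is the hypothesis.

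Closedness is formal. Because $q$ is separated, the set $Z=\{\,x\in X : f(x)=e_Y(p(x))\,\}$, being the locus where the two $T$-morphisms $f$ and $e_Y\circ p$ coincide, is closed in $X$: it is the preimage of the relative diagonal under $(f,\,e_Y\circ p)\colon X\to Y\times_T Y$. For a fibre $X_s$ one has $X_s\subseteq Z$ exactly when $f_s$ is the constant map $e_Y(s)$, that is, exactly when $f_s=0$; hence $W=\{\,s:X_s\subseteq Z\,\}$ and its complement is the image $p(X\setminus Z)$. Since $p$ is flat and of finite type it is an open map, so $p(X\setminus Z)$ is open and $W$ is therefore closed.

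The substantial point is openness, and this is exactly where the Rigidity Lemma enters. As $X\to T$ is proper and flat with geometrically connected and reduced fibres, one has $p_\ast\cO_X=\cO_T$ universally, so the Rigidity Lemma \cite[Corollary 6.2]{GIT} applies to the $T$-morphism $f$ into the separated $T$-scheme $Y$. The restriction $f_t$ contracts the whole fibre $X_t$ to the single point $e_Y(t)$, so the lemma yields an open neighbourhood $U$ of $t$ and a section $\sigma\colon U\to Y$ of $q$ with $f|_{X_U}=\sigma\circ p|_{X_U}$. Consequently, for every $s\in U$ the fibre map $f_s$ is the constant map with value $\sigma(s)$; but a homomorphism of abelian varieties that is constant is the zero homomorphism, whence $\sigma(s)=e_Y(s)$ and $f_s=0$. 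Thus $U\subseteq W$ and $W$ is open. This is the step I expect to be the main obstacle, since it is the only place using genuine geometry (the rigidity of proper connected fibres) rather than formal properties; one must also note that it is precisely the reduction to a single homomorphism $f$ with $f_t=0$, rather than to a pair of morphisms agreeing on $X_t$, that lets the fibrewise constancy of $f_s$ be upgraded to $f_s=0$. Combining the three properties, $W$ is a nonempty open and closed subset of the connected scheme $T$, so $W=T$; therefore $f=e_Y\circ p$ is zero and the restriction map is injective.
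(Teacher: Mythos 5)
Your proof is correct and follows essentially the same route as the paper, which simply cites the Rigidity Lemma (\cite[Corollary 6.2]{GIT}, via \cite[Proposition 3.5.1]{BL10}); your open--closed--nonempty argument on the vanishing locus $W$ is precisely the standard proof behind that citation. The only point to tidy is that the rigidity step, as written, shows $W$ is a neighbourhood of $t$ only --- you should note that the identical argument applies at every point of $W$ (and that set-theoretic containment $X_s\subseteq Z$ upgrades to equality of morphisms because the fibres are reduced), after which everything is in order.
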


\begin{prop} If the base scheme $T$ is connected, the isomorphism $$f_{\cplx{K}}\colon X\times_T\hat{X}\isom Y\times_T\hat{Y}$$ corresponding to any relative Fourier-Mukai transform $\Phi^{\cplx{K}}\colon \cdbc{X}\to \cdbc{Y}$  is isometric.
\end{prop}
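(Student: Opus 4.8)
The plan is to verify the single identity $f_{\cplx{K}}^\dag\circ f_{\cplx{K}}=\Id_{X\times_T\hat{X}}$, which, since $f_{\cplx{K}}$ is already known to be an isomorphism by Proposition \ref{p:equivprod}, is exactly the condition $f_{\cplx{K}}^\dag=f_{\cplx{K}}^{-1}$ that $f_{\cplx{K}}$ be isometric. Both $f_{\cplx{K}}^\dag\circ f_{\cplx{K}}$ and $\Id$ are endomorphisms of the abelian scheme $X\times_T\hat{X}$ over the connected base $T$, so by the rigidity statement of Lemma \ref{l:rigid} (applied with $X\times_T\hat{X}$ playing the role of both source and target) it suffices to prove the equality after restricting to the fibre over a single closed point $t\in T$.

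First I would record the compatibility of the assignment $\Phi^{\cplx{K}}\mapsto f_{\cplx{K}}$ with passage to fibres, which was already observed in the proof of Proposition \ref{p:equivprod}: one has $(\Psi^{\cplx{K}})_t\simeq\Psi^{\cplx{K}_t}$, and hence $(f_{\cplx{K}})_t=f_{\cplx{K}_t}$, the isomorphism $X_t\times\hat{X}_t\isom Y_t\times\hat{Y}_t$ attached to the absolute Fourier--Mukai transform $\Phi^{\cplx{K}_t}\colon\cdbc{X_t}\to\cdbc{Y_t}$. Next I would note that the dagger operation commutes with restriction to fibres, that is $(f_{\cplx{K}}^\dag)_t=(f_{\cplx{K}_t})^\dag$. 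Indeed, the dagger is built from the dual morphisms $\hat\alpha,\hat\beta,\hat\gamma,\hat\delta$ of the block entries of $f_{\cplx{K}}$, and both the formation of the dual abelian scheme, $(\hat{X})_t=\widehat{X_t}$, and the formation of the dual of a morphism of abelian schemes commute with the base change $t\hookrightarrow T$, being defined functorially on the relative Picard functors.

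Combining these two compatibilities with the absolute case, I would then compute at the fibre
$$
(f_{\cplx{K}}^\dag\circ f_{\cplx{K}})_t=f_{\cplx{K}_t}^\dag\circ f_{\cplx{K}_t}=\Id_{X_t\times\hat{X}_t},
$$
where the last equality is Orlov's result \cite{Or02} that the isomorphism associated to an equivalence between derived categories of abelian varieties is isometric (cf. Theorem \ref{thm:sequenceav} and Definition \ref{d:isometric}). Since $\Id_{X\times_T\hat{X}}$ likewise restricts to $\Id_{X_t\times\hat{X}_t}$, the injectivity in Lemma \ref{l:rigid} forces $f_{\cplx{K}}^\dag\circ f_{\cplx{K}}=\Id_{X\times_T\hat{X}}$, whence $f_{\cplx{K}}^\dag=f_{\cplx{K}}^{-1}$ and $f_{\cplx{K}}$ is isometric.

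The only genuinely delicate point is the base-change compatibility of the dagger, namely that dualizing abelian schemes and their morphisms commutes with restriction to fibres; everything else is a formal consequence of rigidity together with the absolute result of Orlov. This is the step I expect to demand the most care, but it is standard once one recalls that the relative dual represents the relative Picard functor and is therefore stable under arbitrary base change.
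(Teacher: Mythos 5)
Your proof is correct and follows essentially the same route as the paper's: restrict to fibres, invoke Orlov's result that the isomorphism attached to an equivalence of abelian varieties is isometric, and conclude by the rigidity Lemma \ref{l:rigid}. The only cosmetic differences are that you verify a single composition $f_{\cplx{K}}^\dag\circ f_{\cplx{K}}=\Id$ (legitimate, since $f_{\cplx{K}}$ is already an isomorphism) where the paper checks both, and that you spell out the base-change compatibility of the dagger, which the paper uses implicitly.
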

\begin{proof} For any closed point $t\in T$, we know that 
$$(f_{\cplx{K}})_t\simeq f_{\cplx{K}_t}\colon X_t\times \hat{X}_t\isom Y_t\times\hat{Y}_t\, ,$$ is the isomorphism associated to the equivalence $\Phi^{\cplx{K}_t}\colon \cdbc{X_t}\isom \cdbc{Y_t}$. By \cite[Proposition 2.18]{Or02}, $(f_{\cplx{K}})_t$ is isometric, thus
$$(f_{\cplx{K}})_t^\dag(f_{\cplx{K}})_t=Id_{X_t\times\hat{X}_t}=(Id_{X\times_T\hat{X}})_t\quad \text{ and }\quad (f_{\cplx{K}})_t(f_{\cplx{K}})_t^\dag=Id_{Y_t\times\hat{Y}_t}=(Id_{Y\times_T\hat{Y}})_t\, ,$$ and we conclude by Lemma \ref{l:rigid} .
\end{proof}

\begin{cor} For any projective abelian scheme $X\to T$ over a connected base $T$, one has a group morphism $\gamma_X\colon \FM_T(\cdbc{X})\to U(X\times_T\hat{X})$.	\qed
\end{cor}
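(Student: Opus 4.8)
The plan is simply to splice together the two results that immediately precede this statement. The corollary stated just above already produces the assignment $\gamma_X\colon \FM_T(\cdbc{X})\to \Aut_T(X\times_T\hat{X})$, $\Phi^{\cplx{K}}\mapsto f_{\cplx{K}}$, and shows it is a group morphism; this multiplicativity itself rests on Proposition \ref{p:equivprod}, which identifies $f_{\cplx{K}}$ as the isomorphism underlying $\Psi^{\cplx{K}}$, together with the compatibility of the construction $\Phi^{\cplx{K}}\mapsto \Psi^{\cplx{K}}$ with composition of kernels. So the only thing left to check is that the image of this morphism actually lands in the subgroup $U(X\times_T\hat{X})$ of isometric isomorphisms.

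That containment is precisely the content of the proposition immediately preceding the corollary: because the base $T$ is connected, every relative Fourier-Mukai transform $\Phi^{\cplx{K}}\in\FM_T(\cdbc{X})$ yields an isomorphism $f_{\cplx{K}}$ satisfying $f_{\cplx{K}}^\dag=f_{\cplx{K}}^{-1}$, that is, $f_{\cplx{K}}\in U(X\times_T\hat{X})$. Since $U(X\times_T\hat{X})$ is by Definition \ref{d:isometric} a subgroup of $\Aut_T(X\times_T\hat{X})$, I would conclude that $\gamma_X$ corestricts to a group morphism $\FM_T(\cdbc{X})\to U(X\times_T\hat{X})$, which is exactly the assertion.

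I expect no genuine obstacle here: the substantive work---showing that $f_{\cplx{K}}$ is well defined, an isomorphism of abelian schemes, and isometric---was already carried out fibrewise using Orlov's description \cite{Or02} and then globalised via the rigidity Lemma \ref{l:rigid}, while multiplicativity came from the composition formula \eqref{e:compos}. The corollary is therefore just the formal observation that a group morphism all of whose values lie in a prescribed subgroup may be regarded as a morphism into that subgroup.
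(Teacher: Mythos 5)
Your proposal is correct and matches the paper exactly: the corollary is stated with no separate proof precisely because it is the formal corestriction of the group morphism $\gamma_X\colon \FM_T(\cdbc{X})\to \Aut_T(X\times_T\hat{X})$ from the earlier corollary to the subgroup $U(X\times_T\hat{X})$, using the immediately preceding proposition that $f_{\cplx{K}}$ is isometric over a connected base.
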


\begin{defin}
We say that two abelian schemes $p\colon X\to T$,  $q\colon Y\to T$ are \textit{relative Fourier-Mukai partners} if the bounded derived categories of coherent sheaves $\cdbc{X}$ and $\cdbc{Y}$ are relative Fourier-Mukai equivalent, that is if there exists a relative Fourier-Mukai transform $\Phi\colon\cdbc{X}\xrightarrow{\sim}\cdbc{Y}$.
\end{defin}

As a consequence of the above we obtain the following result:

\begin{thm}\label{t:FM-partners} Let $X\to T$, $Y\to T$ be projective abelian schemes over a connected base $T$. If $X$ and $Y$ are relative Fourier-Mukai partners, then there exists an isometric isomorphism between $X\times_T\hat{X}$ and $Y\times_T\hat{Y}$ over $T$.\qed
\end{thm}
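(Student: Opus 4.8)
The plan is to obtain the conclusion directly from the structural results just established for the assignment $\Phi^{\cplx{K}}\mapsto f_{\cplx{K}}$, of which the statement is a formal consequence. By definition, $X$ and $Y$ being relative Fourier-Mukai partners means exactly that there exists a relative Fourier-Mukai transform $\Phi^{\cplx{K}}\colon \cdbc{X}\isom\cdbc{Y}$; so the first step is simply to fix one such equivalence.

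Given $\Phi^{\cplx{K}}$, I would invoke Proposition \ref{p:equivprod} to pass to the associated equivalence $\Psi^{\cplx{K}}\colon \cdbc{X\times_T\hat{X}}\isom\cdbc{Y\times_T\hat{Y}}$ and to extract from its description the isomorphism of abelian schemes over $T$
$$f_{\cplx{K}}\colon X\times_T\hat{X}\isom Y\times_T\hat{Y}.$$
This already supplies an isomorphism between the two fibre products; all that remains is the isometry property $f_{\cplx{K}}^\dag=f_{\cplx{K}}^{-1}$.

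For the second and final step I would apply the proposition immediately preceding the statement, which asserts precisely that over a connected base the isomorphism $f_{\cplx{K}}$ attached to any relative Fourier-Mukai transform is isometric. Combining the two inputs yields the desired isometric isomorphism $X\times_T\hat{X}\isom Y\times_T\hat{Y}$, which is the assertion. I expect no genuine obstacle at this stage: the real content has already been discharged in the preceding propositions---the construction of $\Psi^{\cplx{K}}$, the recognition of $f_{\cplx{K}}$ as an isomorphism of abelian schemes (via Proposition \ref{p:trivial} applied fibrewise together with Orlov's result), and the fibrewise verification of isometry using \cite[Proposition 2.18]{Or02} combined with the rigidity Lemma \ref{l:rigid}---so that the theorem is genuinely an immediate corollary of the machinery built up above.
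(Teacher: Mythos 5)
Your proposal matches the paper's argument exactly: the theorem is stated there as an immediate consequence of Proposition \ref{p:equivprod} (which produces the isomorphism $f_{\cplx{K}}\colon X\times_T\hat{X}\isom Y\times_T\hat{Y}$ of abelian schemes from a chosen relative Fourier-Mukai transform) together with the preceding proposition establishing that $f_{\cplx{K}}$ is isometric over a connected base. Nothing is missing, and the route is the same as the paper's.
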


\begin{prop} Let $p\colon X\to T$ be a projective abelian scheme with $T$ connected. The kernel of the morphism $$\gamma_X\colon \FM_T(\cdbc{X})\to U(X\times_T\hat{X})\, ,$$ is isomorphic to the group $\mathbb{Z}\oplus (X(T)\times \hat{X}(T)\times \Pic(T))$ where $X(T)$ and $\hat{X}(T)$ denote the groups of sections of $X$ and $\hat{X}$, respectively. The autoequivalence defined by $(n, x, L, M)\in \mathbb{Z}\oplus (X(T)\times \hat{X}(T)\times \Pic(T))$ is $$\Phi^{(n, x, L, M)}(\cplx{E})=T_{x\ast}(\cplx{E})\otimes L\otimes p^\ast M[n]\, .$$
\end{prop}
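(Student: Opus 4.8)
The plan is to exhibit an explicit bijection between $\bbZ\oplus(X(T)\times\hat{X}(T)\times\Pic(T))$ and $\Ker\gamma_X$ and then to check that it is an isomorphism of groups. For the easy inclusion I would first verify that each $\Phi^{(n,x,L,M)}(\cplx{E})=T_{x\ast}(\cplx{E})\otimes L\otimes p^\ast M[n]$ lies in $\Ker\gamma_X$. It is visibly a composition of the relative translation $T_{x\ast}$, a twist by the line bundle $L\otimes p^\ast M\in\Pic(X)$ and a shift, hence a relative Fourier--Mukai transform. Restricting to a fibre $X_t$ trivialises $p^\ast M$ and gives $T_{x_t\ast}(-)\otimes L_t[n]$, which by Orlov's Theorem \ref{thm:sequenceav} lies in the kernel of $\gamma_{X_t}$, i.e. $(f_{\cplx{K}})_t=f_{\cplx{K}_t}=\Id$ for every $t$. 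Since $f_{\cplx{K}}$ is an isomorphism of abelian schemes, Lemma \ref{l:rigid} then forces $f_{\cplx{K}}=\Id$, so $\Phi^{(n,x,L,M)}\in\Ker\gamma_X$.

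The harder direction is to show that \emph{every} $\Phi=\Phi^{\cplx{K}}\in\Ker\gamma_X$ has this shape, and this is where the main obstacle lies: one must globalise the fibrewise Orlov description. By Proposition \ref{t:haz} we may write $\cplx{K}\simeq\mathcal{K}[n]$ with $n$ independent of $t$. The hypothesis $f_{\cplx{K}}=\Id$ gives $(f_{\cplx{K}})_t=\Id$, so by Theorem \ref{thm:sequenceav} the fibre functor is $\Phi_t\simeq T_{a_t\ast}(-)\otimes L_t[n]$ with $L_t\in\Pic^0(X_t)$. Consequently $\mathcal{K}_x\simeq\Phi_t(\mathcal{O}_x)[-n]$ is a rank-one skyscraper for every closed $x\in X$ with $t=p(x)$, so $\Phi[-n]$ sends skyscrapers to skyscrapers. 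Proposition \ref{p:trivial} then yields $\Phi[-n]\simeq\bR f_\ast(\calL'\otimes-)$ for a relative morphism $f\colon X\to X$ and some $\calL'\in\Pic(X)$; since $\Phi$ is an equivalence, $f\in\Aut(X/T)$ and $\bR f_\ast=f_\ast$.

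The delicate step is to recognise $f$ as translation by a section. Fibrewise $f_t=T_{a_t}$, so setting $x=f\circ e\in X(T)$ (with $e$ the zero section) and $h=T_{-x}\circ f$ one obtains a $T$-morphism fixing the zero section with $h_t=\Id$ on every fibre. By the Rigidity Lemma a $T$-morphism of abelian schemes preserving the zero section is a homomorphism, and then Lemma \ref{l:rigid} forces $h=\Id$, whence $f=T_x$. Using $T_{x\ast}(\cA\otimes\cB)\simeq T_{x\ast}\cA\otimes T_{x\ast}\cB$ for the isomorphism $T_x$, I would rewrite $\Phi(\cplx{E})\simeq T_{x\ast}(\cplx{E})\otimes\calL'[n]$ after renaming the bundle. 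Since $\calL'|_{X_t}\simeq L_t\in\Pic^0(X_t)$, the line bundle $\calL'$ is relatively of degree zero, and the zero section splits $0\to\Pic(T)\xrightarrow{p^\ast}\Pic^{\underline 0}(X)\to\hat{X}(T)\to 0$ exactly as in Remark \ref{r:Pic}, producing a unique decomposition $\calL'\simeq L\otimes p^\ast M$ with $L\in\hat{X}(T)$ and $M\in\Pic(T)$. Thus $\Phi\simeq\Phi^{(n,x,L,M)}$.

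Finally I would verify that the correspondence is a bijective group homomorphism. The tuple is uniquely determined, since $n$ is the shift, $x=f\circ e$, $\calL'=\Phi[-n](\mathcal{O}_X)$, and the splitting above is unique. Composing two such functors and using $T_{x_2\ast}T_{x_1\ast}=T_{(x_1+x_2)\ast}$, the translation invariance $T_{x_2\ast}L_1\simeq L_1$ for $L_1$ relatively of degree zero, and $T_{x_2\ast}p^\ast M_1=p^\ast M_1$ (because $p\circ T_{x_2}=p$), I would obtain $\Phi^{(n_2,x_2,L_2,M_2)}\circ\Phi^{(n_1,x_1,L_1,M_1)}\simeq\Phi^{(n_1+n_2,\,x_1+x_2,\,L_1\otimes L_2,\,M_1\otimes M_2)}$, which is precisely the group law of $\bbZ\oplus(X(T)\times\hat{X}(T)\times\Pic(T))$. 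This identifies $\Ker\gamma_X$ with that group, as claimed.
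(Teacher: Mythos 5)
Your proof is correct and follows essentially the same route as the paper: reduce to a shifted sheaf kernel, use Orlov's fibrewise description of the kernel of $\gamma_{X_t}$ to see that $\Phi[-n]$ sends skyscrapers to skyscrapers, apply Proposition \ref{p:trivial}, identify $f$ as a translation by a section and split $\Pic^{\underline 0}(X)$ via the unit section. Your treatment is in fact slightly more careful than the paper's at the step $f\simeq T_x$, where you justify it via the Rigidity Lemma and Lemma \ref{l:rigid} rather than asserting it directly.
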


\begin{proof} Let $\Phi=\Phi^{\cplx{K}}\in \FM_T(\cdbc{X})$  be a  relative Fourier-Mukai transform. For every closed point $t\in T$, the restriction $\Phi_t$ is a Fourier-Mukai transform, and then, using Proposition \ref{t:haz}, we know that $\cplx{K}\simeq \mathcal{K}[n]$ where $\mathcal{K}$ is a sheaf on $X\times_TX$ flat over both factors and $n\in \mathbb{Z}$. Suppose that $\Phi$ is in the kernel of $\gamma_X$. Thus, $f_{\cplx{K}_t}=Id_{X_t\times \hat{X}_t}$ for any $t\in T$. By \cite[Proposition 3.3]{Or02} that describes the kernel of  the analogous morphism in the absolute case, the equivalences $\Phi^{\cplx{K}_t}$ transform skyscraper sheaves into skyscraper sheaves  up to shift. One sees that $\Phi^{\cplx{K}}$ has the same property by using Equation \eqref{e:directimage} and  the same argument as in the proof of Proposition \ref{p:equivprod}. Hence, Proposition \ref{p:trivial} 
proves that  $\Phi^{\cplx{K}}\simeq f_\ast (-)\otimes \mathcal{L}[n]$ where $f\colon X\to X$ is a relative automorphism and $\mathcal{L}\in \Pic(X)$.
Thus for any $t\in T$ 
$$\Phi^{\cplx{K}_t}\simeq f|_{X_t\ast}(-)\otimes \mathcal{L}|_{X_t}[n]\, .$$ Using again \cite[Proposition 3.3]{Or02} we get that $f|_{X_t}\simeq T_{x_t}$ is a translation by some $x_t\in X_t$ and $\mathcal{L}|_{X_t}\in \Pic^0(X_t)$, that is, $\mathcal{L}\in\Pic^{\underline 0}(X)$. Then, there is a section $x\in X(T)$ so that $f\simeq T_x$. 

By using the unit section $e\colon T\hookrightarrow X$  and proceeding as in Remark \ref{r:Pic}, we have $\Pic^{\underline 0}(X)\simeq \hat X(T)\times \Pic(T)$. Therefore, for any   $\mathcal{L}\in\Pic^{\underline 0}(X)$ there exist $L\in\hat{X}(T)$ and $M\in \Pic(T)$ satisfying $\mathcal{L}\simeq L\otimes p^\ast M$.
To finish the proof it is enough to check that the group law in $\mathbb{Z}\oplus (X(T)\times \hat{X}(T)\times \Pic(T))$ is the direct product law. This follows from the commutativity of shifts with translations and tensor products by line bundles and to the fact that  for every $x\in X(T)$, $L\in \hat{X}(T)$ and $M\in \Pic(T)$ one has $T_x^\ast(L\otimes p^\ast M)\simeq L\otimes p^\ast M$.\end{proof}

\begin{cor}\label{c:FMabeliano} For any projective abelian scheme $X\to T$ where $T$ is connected, one has a group exact sequence
$$0\to \mathbb{Z}\oplus (X(T)\times \hat{X}(T)\times \Pic(T))\to \FM_T(\cdbc{X})\to U(X\times_T\hat{X})\, .$$
\end{cor}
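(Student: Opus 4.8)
The plan is to assemble the exact sequence directly from the two results that immediately precede the statement, so that the corollary becomes a formal repackaging rather than a new argument. First I would recall that the corollary proved just before gives a group morphism $\gamma_X\colon \FM_T(\cdbc{X})\to U(X\times_T\hat{X})$, obtained by sending a relative Fourier-Mukai transform $\Phi^{\cplx{K}}$ to the isometric isomorphism $f_{\cplx{K}}\colon X\times_T\hat{X}\isom X\times_T\hat{X}$ furnished by Proposition \ref{p:equivprod}. This is exactly the second arrow of the desired sequence, so nothing new is needed there.

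Next I would invoke the proposition preceding the statement, which identifies $\ker\gamma_X$ explicitly with the abelian group $\mathbb{Z}\oplus(X(T)\times\hat{X}(T)\times\Pic(T))$ via the assignment $(n,x,L,M)\mapsto\Phi^{(n,x,L,M)}$, where $\Phi^{(n,x,L,M)}(\cplx{E})=T_{x\ast}(\cplx{E})\otimes L\otimes p^\ast M[n]$, and which also checks that this assignment respects the direct-product group law and is an isomorphism onto the kernel. I would take this isomorphism, composed with the inclusion $\ker\gamma_X\hookrightarrow\FM_T(\cdbc{X})$, as the first arrow of the sequence.

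With the two maps fixed, exactness is then immediate. The first map is injective because it is an isomorphism onto a subgroup, which gives exactness at $\mathbb{Z}\oplus(X(T)\times\hat{X}(T)\times\Pic(T))$; exactness at $\FM_T(\cdbc{X})$ is precisely the assertion that the image of the first map equals $\ker\gamma_X$, which is the content of the cited proposition. Since no $\to 1$ terminates the sequence, surjectivity of $\gamma_X$ is not claimed and there is nothing to verify at the right-hand end. The only point worth a line is that $\ker\gamma_X$ is automatically normal in $\FM_T(\cdbc{X})$, so the sequence is well-formed even though $\FM_T(\cdbc{X})$ need not be abelian.

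I do not expect any real obstacle here: all the substantive input—the flatness and shifted-sheaf form of the kernel (Proposition \ref{t:haz}), the trivialization of transforms sending skyscrapers to skyscrapers (Proposition \ref{p:trivial}), and Orlov's absolute description in \cite{Or02}—has already been absorbed into the computation of $\ker\gamma_X$. Consequently the proof reduces to citing the existence of $\gamma_X$ together with the identification of its kernel, and observing that these two facts are exactly the two exactness conditions being asserted.
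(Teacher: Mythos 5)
Your proposal is correct and matches the paper exactly: the corollary is stated there without proof precisely because it is the formal combination of the preceding corollary (existence of the group morphism $\gamma_X\colon \FM_T(\cdbc{X})\to U(X\times_T\hat{X})$) with the preceding proposition (identification of $\ker\gamma_X$ with $\mathbb{Z}\oplus(X(T)\times\hat{X}(T)\times\Pic(T))$). Nothing further is needed.
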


\begin{rem} Notice that a projective abelian scheme $X\to T$ of relative dimension 1 can be considered as a Weierstra\ss\ fibrations without singular fibres. Hence,  Theorem \ref{t:FMW} applies in this situation and thus there exists an exact sequence
$$1\lra \Aut^0_T\cdbc{X}\lra \FM_T(\cdbc{X})\stackrel{\widetilde{\ch}}\lra \SL_2(\bbZ)\lra 1\, ,$$
with $$\Aut^0_T\cdbc{X}=\Aut(X/T)\rtimes(2\bbZ\times \Pic^{\underline 0}(X))=\Aut_{abel}(X/T)\rtimes(2\bbZ\times X(T)\times \hat{X}(T)\times \Pic(T)),$$ where $\Aut_{abel}(X/T)$ denotes the abelian scheme automorphisms.
\end{rem}

\begin{thm}\label{t:abelpol} Let $X\to T$ be an abelian scheme over a connected base such that  there exists a line bundle $L\in\Pic(X)$ inducing a principal polarization and $\End_T(X)=\mathbb{Z}$. Then $U(X\times_T X)\simeq \SL_2(\mathbb Z)$ and there is an exact sequence $$0\to \mathbb{Z}\oplus (X(T)\times \hat{X}(T)\times \Pic(T))\to \FM_T(\cdbc{X})\to \SL_2(\mathbb Z)\to 1.$$
\end{thm}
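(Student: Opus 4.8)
The exact sequence is already available up to its rightmost term from Corollary \ref{c:FMabeliano} and the morphism $\gamma_X$, so the plan is to settle two points: to identify $U(X\times_T\hat X)$ with $\SL(2,\bbZ)$, and to prove that $\gamma_X$ is surjective. First I would compute the isometry group. The line bundle $L$ induces a principal polarization, i.e.\ an isomorphism of abelian schemes $\lambda_L\colon X\isom\hat X$ over $T$, which identifies $U(X\times_T\hat X)$ with $U(X\times_T X)$. An isometric isomorphism $f\colon X\times_T X\isom X\times_T X$ is a matrix $\begin{pmatrix}\alpha&\beta\\\gamma&\delta\end{pmatrix}$ whose entries lie in the various groups $\Hom_T$ between the two factors; precomposing or postcomposing with $\lambda_L$ identifies each of these groups with $\End_T(X)=\bbZ$. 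Since multiplication by $n$ is self-dual and the polarization is symmetric ($\hat{\lambda_L}=\lambda_L$), the involution $f\mapsto f^\dag$ becomes $\begin{pmatrix}a&b\\c&d\end{pmatrix}\mapsto\begin{pmatrix}d&-b\\-c&a\end{pmatrix}$, and a direct multiplication shows that the isometric condition $f^\dag=f^{-1}$ is equivalent to $ad-bc=1$. Hence $U(X\times_T\hat X)\simeq U(X\times_T X)\simeq\SL(2,\bbZ)$.

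It then remains to prove that $\gamma_X\colon\FM_T(\cdbc X)\to U(X\times_T\hat X)\simeq\SL(2,\bbZ)$ is surjective. The plan is to exhibit relative Fourier-Mukai transforms whose images under $\gamma_X$ form a pair of generators of $\SL(2,\bbZ)$. For the first I would take the twist $\Phi_1=(-\otimes L)$, which is a relative Fourier-Mukai transform with kernel the pushforward of $L$ along the relative diagonal $\delta\colon X\hookrightarrow X\times_T X$. For the second I would take $\Phi_2=\Phi^{\mathcal{P}}\circ(\lambda_L)_\ast$, where $\Phi^{\mathcal{P}}\colon\cdbc{\hat X}\to\cdbc X$ is the relative Poincar\'e transform and $(\lambda_L)_\ast\colon\cdbc X\to\cdbc{\hat X}$; being a composition of relative Fourier-Mukai transforms, $\Phi_2$ lies in $\FM_T(\cdbc X)$.

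To evaluate $\gamma_X(\Phi_i)$ I would use the fibrewise compatibility $(f_{\cplx K})_t\simeq f_{\cplx K_t}$ established in the proof of Proposition \ref{p:equivprod}, together with Orlov's computation in the absolute case underlying Theorem \ref{thm:sequenceav}: fibre by fibre the isometry attached to $(\Phi_1)_t=(-\otimes L_t)$ is $\begin{pmatrix}1&0\\1&1\end{pmatrix}$ (the polarization $\lambda_{L_t}$ becomes $1$ under our identification), while the one attached to $(\Phi_2)_t$ is the Fourier matrix $\begin{pmatrix}0&-1\\1&0\end{pmatrix}$. By the rigidity of Lemma \ref{l:rigid} a relative isometry is determined by its restriction to a single fibre, so $\gamma_X(\Phi_1)$ and $\gamma_X(\Phi_2)$ are exactly these two matrices. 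Since $\begin{pmatrix}1&0\\1&1\end{pmatrix}$ and $\begin{pmatrix}0&-1\\1&0\end{pmatrix}$ generate $\SL(2,\bbZ)$, the morphism $\gamma_X$ is surjective, and combined with Corollary \ref{c:FMabeliano} this yields the stated exact sequence with the closing term $\to 1$.

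The hard part is the surjectivity of $\gamma_X$: one must produce genuine relative transforms realizing the generators and then pin down the associated isometric isomorphisms $f_{\cplx K}$. There is no direct way to compute these isometries in the relative setting, but the value is forced by coupling the already known fibrewise computation (from Orlov's absolute theorem) with the rigidity Lemma \ref{l:rigid}, which eliminates any ambiguity between morphisms of abelian schemes over $T$ that share the same restriction to one fibre. The identification $U(X\times_T X)\simeq\SL(2,\bbZ)$, by contrast, is a routine consequence of $\End_T(X)=\bbZ$ and the existence of the principal polarization.
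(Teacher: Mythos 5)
Your proposal is correct and follows essentially the same route as the paper: identify $U(X\times_T\hat X)$ with $\SL(2,\bbZ)$ using the principal polarization and $\End_T(X)=\bbZ$, then realize a pair of generators by the twist $-\otimes L$ and the Poincar\'e transform composed with $(\lambda_L)_\ast$ (the paper writes the latter composition in the other order, and leaves the computation of the associated isometries as ``straightforward'' where you justify it via fibrewise reduction to Orlov's absolute result plus Lemma \ref{l:rigid}, which is exactly the mechanism the paper uses elsewhere). No gaps.
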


\begin{proof} If $\End_T(X)=\mathbb Z$ then $\End_T(\hat X)=\mathbb Z$, $\Hom_T(X,\hat X)\simeq \mathbb Z$, and $\Hom(\hat X,X)\simeq \mathbb Z$. Therefore, if $\lambda\colon X\xrightarrow{\sim} \hat X$ is the principal polarization, every element $f\in U(X\times_T \hat X)$ is of the form $f=\begin{pmatrix} a_X & b\cdot\lambda^{-1}\\
c\cdot\lambda & d_{\hat X}
\end{pmatrix}$, where $a, b, c, d\in\mathbb Z$. Definition \ref{d:isometric} implies that $\begin{pmatrix} a &b\\ c & d\end{pmatrix}\in \SL_2(\mathbb Z)$, proving the first claim. Let us consider the relative Fourier-Mukai transforms $(\lambda^{-1})_*\circ\Phi^{\mathcal P}\colon \cdbc{X}\to\cdbc{ X}$ and $L\otimes(-)\colon\cdbc{ X}\to\cdbc{X}$, where $\mathcal P$ is the Poincar\'e line bundle. A straightforward computation shows that their associated symmetric isomorphisms are respectively $\begin{pmatrix} 0 &-\lambda^{-1}\\ \lambda & 0
\end{pmatrix}$, $\begin{pmatrix} 1_X & 0\\ \lambda & 1_{\hat X}
\end{pmatrix}$. These correspond via the identification $U(X\times_T X)\simeq \SL_2(\mathbb Z)$ to a pair of generators, therefore $\gamma_X\colon \FM_T(\cdbc{X})\to U(X\times_T\hat{X})$ is surjective and we are done.
\end{proof}

This is our general theory for abelian schemes over an arbitrary base. In order to obtain further results we need to restrict the type of the base scheme and in particular we endeavour now to study the case where it is normal. 

From now on we suppose that $T$ is normal and connected, and thus integral. We denote by $\eta\in T$ the generic point. We start by recalling Lemma 4.1 in \cite{Pol02}.

\begin{lem}\label{l:rest-fibre}
Let $Z$ and $W$ be abelian schemes over $T$. The restriction map to the generic fibre $$\begin{aligned}\Hom_T(Z, W) &\to \Hom(Z_\eta,W_\eta)\\
f &\mapsto f_\eta\, ,\end{aligned}$$
is an isomorphism. Moreover $f$  is an isogeny if and only $f_\eta$ is an isogeny.
\end{lem}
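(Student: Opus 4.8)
First I would dispatch injectivity. Since $T$ is normal and connected it is integral, and because $Z\to T$ is smooth with geometrically integral fibres, $Z$ is integral and its generic fibre $Z_\eta$ is schematically dense. As $W\to T$ is separated, two $T$-morphisms $Z\to W$ that coincide on $Z_\eta$ coincide everywhere; this is exactly Lemma \ref{l:rigid} applied to the point $t=\eta$. The ``only if'' part of the isogeny statement is trivial, so the real content is the surjectivity of $f\mapsto f_\eta$ together with the ``if'' direction for isogenies.

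For surjectivity I would fix $\phi\in\Hom(Z_\eta,W_\eta)$ and aim to extend it to a $T$-morphism $f\colon Z\to W$. By the Rigidity Lemma this suffices: such an $f$ sends the unit section of $Z$ to that of $W$ (the two agree on the dense $Z_\eta$ and $W$ is separated), and a morphism of abelian schemes over a connected base preserving the unit section is automatically a homomorphism, with $f_\eta=\phi$. To construct $f$, I would take the graph $\Gamma_\phi\subset Z_\eta\times_{k(\eta)}W_\eta=(Z\times_T W)_\eta$ and let $\bar\Gamma$ be its schematic closure in $Z\times_T W$. Then $\bar\Gamma$ is integral, and the first projection $p_1\colon\bar\Gamma\to Z$ is proper (the restriction to the closed subscheme $\bar\Gamma$ of the base change of the proper morphism $W\to T$) and an isomorphism over $Z_\eta$, hence proper and birational. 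Extending $\phi$ is then equivalent to showing that $p_1$ is an isomorphism.

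This is the step where normality is essential. Let $U\subseteq Z$ be the largest open over which $p_1$ is an isomorphism, so that $Z_\eta\subseteq U$. At each codimension-one point $\xi\in Z$ the local ring $\cO_{Z,\xi}$ is a discrete valuation ring by normality, so the valuative criterion of properness applied to $W\to T$ extends the rational map over $\Spec\cO_{Z,\xi}$ and forces $\xi\in U$; hence $\codim(Z\setminus U,Z)\geq 2$. Since $Z$ is smooth over $T$ and $W$ is a smooth separated $T$-group scheme, Weil's extension theorem then propagates the morphism across the codimension $\geq 2$ locus $Z\setminus U$ and yields the desired $f$. I expect this propagation from codimension one to all of $Z$, using the group law on $W$, to be the main obstacle, since it is precisely what fails over a non-normal base.

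Finally I would treat the ``if'' direction for isogenies. Assuming $\phi=f_\eta$ is an isogeny with $\Ker\phi\subseteq Z_\eta[n]$ for some $n\geq 1$, the map $[n]_{Z_\eta}$ factors through $\phi$, giving an isogeny $\psi\colon W_\eta\to Z_\eta$ with $\psi\circ\phi=[n]_{Z_\eta}$. By the surjectivity just established, $\psi$ extends to a homomorphism $g\colon W\to Z$, and by injectivity $g\circ f=[n]_Z$, the two sides agreeing on the generic fibre. Since $[n]_Z$ is finite, $f$ is quasi-finite and proper, hence finite; its image is closed and contains the dense subset $W_\eta=\phi(Z_\eta)$, so $f$ is surjective. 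On every fibre $f_t$ is then a finite homomorphism of abelian varieties of the common relative dimension, hence an isogeny, and the fibrewise flatness criterion shows $f$ is flat. Therefore $f$ is an isogeny, which completes the proof.
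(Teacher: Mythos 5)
The paper does not actually prove this lemma: it is quoted verbatim as Lemma 4.1 of Polishchuk \cite{Pol02}, whose proof in turn rests on the classical extension theorem for homomorphisms of abelian schemes over a normal base (Faltings--Chai, Prop.~I.2.7). Your argument is a correct, self-contained reconstruction of exactly that standard proof: injectivity from schematic density of $Z_\eta$ in the integral scheme $Z$ together with separatedness of $W\to T$; surjectivity by spreading $\phi$ out, using the valuative criterion at the codimension-one points of the normal scheme $Z$ to see that the resulting $T$-rational map $Z\to W$ is defined in codimension one, and then invoking Weil's extension theorem to extend it everywhere; and the isogeny equivalence by factoring $[n]_{Z_\eta}$ through $f_\eta$, deducing that $f$ is finite and surjective, and then applying the fibrewise criterion of flatness. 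Two points deserve explicit care in a written version: (i) the version of Weil's extension theorem you need is the relative one for a smooth scheme and a smooth separated group scheme over a normal noetherian base (Bosch--L\"utkebohmert--Raynaud, \emph{N\'eron Models}, Thm.~4.4/1) --- the classical statement over a field does not suffice; and (ii) the rigidity-lemma corollary that a $T$-morphism of abelian schemes preserving the unit section is automatically a homomorphism, which is what puts the extended morphism back into $\Hom_T(Z,W)$. What your route buys over the paper's is simply a proof where the paper offers only a pointer to the literature; the mathematics is the same as in the sources it cites.
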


Given two abelian schemes over $T$, we denote by $U_0(X\times_T\hat{X}, Y\times_T\hat{Y})$ the subset of $U(X\times_T\hat{X}, Y\times_T\hat{Y})$ formed by those  $f= \begin{pmatrix} \alpha &\beta\\
 \gamma & \delta
\end{pmatrix}$  such that  $\beta\colon  \hat{X}\to Y$  is an isogeny.

As an immediate consequence of Lemma \ref{l:rest-fibre} we get.

\begin{prop}\label{p:rest-fibre} Let $X$ and $Y$ be abelian schemes over a normal base $T$. The restriction map gives an isomorphism $U(X\times_T\hat{X}, Y\times_T\hat{Y}) \xrightarrow{\sim} U(X_\eta\times\hat{X}_\eta, Y_\eta\times\hat{Y}_\eta)$  and $f\in U_0(X\times_T\hat{X}, Y\times_T\hat{Y})$ if and only if $f_\eta\in U_0(X_\eta\times\hat{X}_\eta, Y_\eta\times\hat{Y}_\eta)$.
\end{prop}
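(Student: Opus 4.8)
The plan is to deduce everything from Lemma~\ref{l:rest-fibre} applied to the two product abelian schemes $Z=X\times_T\hat X$ and $W=Y\times_T\hat Y$. First I would invoke the lemma to obtain that the restriction map $\Hom_T(X\times_T\hat X,\,Y\times_T\hat Y)\to\Hom(X_\eta\times\hat X_\eta,\,Y_\eta\times\hat Y_\eta)$ is a bijection, and that a morphism is an isogeny if and only if its restriction to $\eta$ is. The remaining task is to check that this bijection carries isometric isomorphisms to isometric isomorphisms and respects the subset $U_0$.

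Next I would observe that restriction to the generic fibre sends isomorphisms to isomorphisms and conversely: if $f_\eta$ is an isomorphism, then $(f_\eta)^{-1}$ lifts, by the lemma, to some $g\in\Hom_T(W,Z)$; since $(g\circ f)_\eta=\Id$ and $(f\circ g)_\eta=\Id$, injectivity of the restriction map forces $g\circ f=\Id$ and $f\circ g=\Id$, so $f$ is an isomorphism. I would then note that the block entries $\alpha,\beta,\gamma,\delta$ of $f$ are read off by pre- and post-composing $f$ with the structural projections and inclusions of the product abelian schemes, all of which commute with restriction; and that the formation of the dual morphism commutes with base change to $\eta$, so that $\widehat{\alpha_\eta}=(\hat\alpha)_\eta$ and similarly for $\beta,\gamma,\delta$. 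Hence $(f^\dag)_\eta=(f_\eta)^\dag$. Combining these facts, the relation $f^\dag=f^{-1}$ holds if and only if it holds after restriction to $\eta$, which yields the desired isomorphism $U(X\times_T\hat X,\,Y\times_T\hat Y)\xrightarrow{\sim}U(X_\eta\times\hat X_\eta,\,Y_\eta\times\hat Y_\eta)$.

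For the statement about $U_0$, the entry $\beta\colon\hat X\to Y$ is a morphism of abelian schemes over $T$ whose restriction $\beta_\eta$ is precisely the corresponding entry of $f_\eta$; by the isogeny part of Lemma~\ref{l:rest-fibre}, $\beta$ is an isogeny if and only if $\beta_\eta$ is, so $f\in U_0$ exactly when $f_\eta\in U_0$. The only points requiring care — and hence the main, if mild, obstacle — are the compatibilities used in the previous paragraph, namely that passing to the generic fibre commutes with the dagger operation. This reduces to the compatibility of the formation of the dual abelian scheme and of dual morphisms with base change (the dual of a restricted morphism is the restriction of the dual), together with the fact that the block-matrix description of a morphism into a product is obtained by composing with projections and inclusions, operations that are visibly compatible with restriction to $\eta$.
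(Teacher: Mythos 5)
Your proposal is correct and follows exactly the route the paper takes: the paper states the proposition as an ``immediate consequence'' of Lemma~\ref{l:rest-fibre} applied to the product abelian schemes, and your write-up simply supplies the (correct) details of that deduction --- lifting inverses via bijectivity of the restriction map, compatibility of the dagger operation with passage to the generic fibre, and the isogeny criterion for the $U_0$ statement.
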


We also have the following finiteness result.

\begin{thm}\label{t:finiteness} For every abelian scheme $W$ over a normal base $T$ there are only finitely many,  up to isomorphism, abelian schemes that can be embedded in $W$ as abelian sub-schemes.
\end{thm}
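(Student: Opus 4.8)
The plan is to reduce the relative finiteness statement to finiteness over the generic point, using Lemma \ref{l:rest-fibre}, and then to invoke the classical fact that an abelian variety over a field has only finitely many abelian subvarieties up to isomorphism. Throughout, $\eta\in T$ denotes the generic point and $K=\kappa(\eta)$ its residue field.

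First I would pass to isomorphism classes and build the restriction map. If $V\hookrightarrow W$ is an abelian subscheme over $T$, then its generic fibre $V_\eta\hookrightarrow W_\eta$ is an abelian subvariety over $K$, since a closed immersion which is a homomorphism of abelian schemes restricts to a closed immersion and a homomorphism of abelian varieties. This assignment descends to a map $[V]\mapsto[V_\eta]$ from the set of isomorphism classes of abelian subschemes of $W$ (as abelian schemes over $T$) to the set of isomorphism classes of abelian subvarieties of $W_\eta$ (as abelian varieties over $K$).

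Next I would prove this map is injective, which is where Lemma \ref{l:rest-fibre} does the work. Suppose $V,V'\hookrightarrow W$ are abelian subschemes with $V_\eta\cong V'_\eta$ over $K$. By Lemma \ref{l:rest-fibre} the restriction $\Hom_T(V,V')\xrightarrow{\sim}\Hom(V_\eta,V'_\eta)$ is bijective, so an isomorphism $\phi_\eta\colon V_\eta\to V'_\eta$ and its inverse lift uniquely to morphisms $\phi\colon V\to V'$ and $\psi\colon V'\to V$ over $T$. Since $\psi\circ\phi$ and $\Id_V$ have the same restriction $\Id_{V_\eta}$, injectivity of the restriction forces $\psi\circ\phi=\Id_V$, and symmetrically $\phi\circ\psi=\Id_{V'}$; hence $V\cong V'$ over $T$. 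Thus the map on isomorphism classes is injective, and it suffices to bound its target.

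Finally I would bound the target by the absolute finiteness result. By Poincar\'e complete reducibility, $W_\eta$ is isogenous to a product $\prod_i B_i^{n_i}$ of simple, pairwise non-isogenous abelian varieties, so any abelian subvariety of $W_\eta$ is isogenous to $\prod_i B_i^{m_i}$ with $0\le m_i\le n_i$; this yields finitely many isogeny types, and within each isogeny type the subvarieties of the fixed $W_\eta$ fall into finitely many isomorphism classes. I would cite this classical statement, available in \cite{BL10}. The step demanding the most care, and the main obstacle, is precisely this absolute finiteness \emph{up to isomorphism}: note that there are in general infinitely many abelian subvarieties of $W_\eta$ as subschemes (for instance the graphs of the integer multiplications inside $E\times_T E$), so the content is that these collapse to finitely many isomorphism types. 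By contrast the reduction itself is clean, because only injectivity, not surjectivity, of the restriction map is used; in particular no question of extending an abelian subvariety of $W_\eta$ across $T$ ever arises, so the normality of $T$ enters only through Lemma \ref{l:rest-fibre}.
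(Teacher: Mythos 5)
Your proof is correct and follows essentially the same route as the paper: pass to the generic fibre, use Lemma \ref{l:rest-fibre} to see that abelian subschemes with isomorphic generic fibres are isomorphic over $T$, and invoke the classical finiteness of abelian subvarieties of a fixed abelian variety up to isomorphism. The only difference is bibliographic: the paper attributes the absolute finiteness theorem to Lenstra--Oort--Zarhin \cite{LOZ96}, which is the correct source for the statement you sketch.
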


\begin{proof} By passing to the generic fibre, every abelian subscheme $Z\subset W$ yields an abelian subvariety $Z_\eta\subset W_\eta$. The result follows now from the finiteness theorem for abelian subvarieties in \cite{LOZ96} and Lemma \ref{l:rest-fibre}.
\end{proof}

Now we can prove a finiteness result for relative Fourier-Mukai partners.

\begin{thm} \label{t:finitepartner}Any abelian scheme $p\colon X\to T$ over a connected normal base has finitely many non-isomorphic relative Fourier-Mukai partners.
\end{thm}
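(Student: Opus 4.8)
The plan is to reduce the finiteness of relative Fourier-Mukai partners to the finiteness of abelian subschemes of a fixed abelian scheme, which is exactly Theorem~\ref{t:finiteness}. First I would invoke Theorem~\ref{t:FM-partners}: if $Y\to T$ is a relative Fourier-Mukai partner of $X\to T$, then there exists an isometric isomorphism $f\colon X\times_T\hat{X}\isom Y\times_T\hat{Y}$ over $T$. The key observation is that the graph of such an isomorphism, or more usefully the image of $Y$ (resp. $\hat{Y}$) under $f^{-1}$ inside $X\times_T\hat{X}$, produces an abelian subscheme of the fixed abelian scheme $W:=X\times_T\hat{X}$. Thus each partner $Y$ gives rise to an abelian subscheme of $W$, and the plan is to show that $Y$ is determined up to isomorphism by this data, so that finiteness of subschemes forces finiteness of partners.

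More precisely, I would argue as follows. Write $f=\begin{pmatrix}\alpha &\beta\\ \gamma &\delta\end{pmatrix}$ with $\alpha\colon X\to Y$, $\beta\colon\hat{X}\to Y$, etc. Composing $f$ with the two projections from $Y\times_T\hat{Y}$ recovers $Y$ and $\hat{Y}$ as quotients, or equivalently the kernels of these composite maps cut out abelian subschemes of $X\times_T\hat{X}$. Since $f$ is an isomorphism, $Y$ is recovered as $(X\times_T\hat{X})/\ker(\text{pr}_Y\circ f)$, and the subscheme $\ker(\text{pr}_Y\circ f)\subset X\times_T\hat{X}$ is an abelian subscheme of the \emph{fixed} abelian scheme $W=X\times_T\hat{X}$. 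By Theorem~\ref{t:finiteness} applied to $W$, there are only finitely many such abelian subschemes up to isomorphism; since the quotient $Y$ is determined by this subscheme, there can be only finitely many non-isomorphic $Y$ arising in this way.

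The main obstacle I anticipate is the bookkeeping needed to guarantee that distinct partners give rise to genuinely controlled subscheme data, and that the quotient construction is well behaved over the (merely normal, integral) base $T$. In particular one must ensure that kernels and quotients of morphisms of abelian schemes over $T$ are again abelian schemes, which in the absolute case is standard but over a general base requires the flatness and smoothness provided by the abelian scheme structure. Here Lemma~\ref{l:rest-fibre} is the crucial tool: it lets me pass to the generic fibre $\eta$, where all the morphisms and subvarieties are classical, check the relevant statements there, and then lift them uniquely back to $T$. In fact the cleanest route is probably to transport the entire argument to the generic fibre, apply the absolute finiteness of abelian subvarieties of $W_\eta$, and then use the isomorphism $\Hom_T(Z,W)\xrightarrow{\sim}\Hom(Z_\eta,W_\eta)$ to conclude that each generic partner extends uniquely to a $T$-partner.

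The final step is a counting argument: the map sending a partner $Y$ to the isomorphism class of the abelian subscheme it determines inside $W=X\times_T\hat{X}$ has finite image by Theorem~\ref{t:finiteness}, and each fibre of this map is finite because a partner $Y$ is recovered, up to isomorphism, from its associated subscheme via the quotient construction. Therefore the set of isomorphism classes of relative Fourier-Mukai partners of $X$ is finite, which is the assertion of the theorem.
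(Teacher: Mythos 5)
Your overall strategy is the paper's: combine Theorem \ref{t:FM-partners} with Theorem \ref{t:finiteness}. But the way you implement the reduction contains a gap. In your ``more precisely'' step you realize $Y$ as the quotient $W/K$ with $W=X\times_T\hat X$ and $K=\Ker(\mathrm{pr}_Y\circ f)$, and then conclude finiteness because ``the quotient $Y$ is determined by this subscheme'' and $K$ ranges over finitely many isomorphism classes. That last inference does not follow: Theorem \ref{t:finiteness} (like the Lenstra--Oort--Zarhin theorem behind it) bounds only the set of \emph{isomorphism classes} of abelian subschemes, not the set of embedded subschemes, which is in general infinite (think of the graphs of multiplication by $n$ in $E\times E$). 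The quotient $W/K$ depends on the embedding $K\hookrightarrow W$, not merely on the isomorphism class of $K$, so ``finitely many classes of $K$'' does not by itself yield ``finitely many classes of $W/K$.'' (One could patch this particular route by observing that $K=f^{-1}(e_Y\times_T\hat Y)\simeq\hat Y$, so that $Y\simeq\hat K$ \emph{is} determined by the isomorphism class of $K$ via duality rather than via the quotient; but as written the counting step is broken.)

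The fix is the simpler observation you mention in passing and then abandon: the composite $Y\hookrightarrow Y\times_T\hat Y\xrightarrow{\,f^{-1}\,}X\times_T\hat X$ (embedding $Y$ as $Y\times_T e_{\hat Y}(T)$) exhibits $Y$ itself as an abelian subscheme of the fixed abelian scheme $W=X\times_T\hat X$. Theorem \ref{t:finiteness} then immediately bounds the isomorphism classes of such $Y$, with no quotients, no kernels, no passage to the generic fibre, and no appeal to Lemma \ref{l:rest-fibre}. This is exactly the paper's two-line proof; the extra machinery you invoke (existence and good behaviour of quotients of abelian schemes over a normal base, spreading out from $\eta$) is all avoidable.
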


\begin{proof} If $q\colon Y\to T$ is a relative Fourier-Mukai partner of $p\colon X\to T$ then by Theorem \ref{t:FM-partners} there is an isometric isomorphism $f\colon X\times_T\hat X\xrightarrow{\sim} Y\times_T\hat Y$. Therefore $Y$ is an abelian subscheme of $X\times_T\hat X$ and we conclude by Theorem \ref{t:finiteness}.
\end{proof}

For a projective abelian scheme $p\colon X\to T$, let us denote by $\vb^{sh}(X/T)$ (resp. $\vb^{ssh}(X/T)$) the set of relatively semihomogeneous (resp. relatively semihomogeneous and relatively simple) vector bundles on $X$ and let $\NS(X/T)=\Pic(X/T)/\Pic^0(X/T)$ be the relative N\'eron-Severi group. Consider the slope map
$$\begin{aligned} \mu\colon \vb^{sh}(X/T) &\to \NS(X/T)\otimes_\mathbb{Z}\mathbb{Q}\, ,\\
\mathcal{F}&\mapsto \frac{[\det(\mathcal{F})]}{\rk{\mathcal{F}}}\, .
\end{aligned}$$

\begin{prop}\label{t:equivalent} Let $p\colon X\to T$ and $q\colon Y\to T$ be two abelian schemes over  a normal connected base such that the slope map $\mu\colon \vb^{sh}(X\times_T Y/T) \to  \NS(X\times_T Y/T)\otimes_\mathbb{Z}\mathbb{Q}$ is surjective. Then, for any $f\in U(X\times_T\hat{X}, Y\times_T\hat{Y})$ there exists a relative Fourier-Mukai transform $\Phi^{\cplx{K}}$ such that $ f_{\cplx{K}}=f$.
\end{prop}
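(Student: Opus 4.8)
The aim is to prove that the assignment $\Phi^{\cplx{K}}\mapsto f_{\cplx{K}}$ of Proposition \ref{p:equivprod} is surjective, i.e.\ that the given isometric isomorphism $f$ is induced by some relative Fourier--Mukai transform. The plan is to construct the kernel $\cplx{K}$ on $X\times_T Y$ explicitly as (a shift of) a relatively semihomogeneous sheaf whose fibrewise invariant $\Upsilon$ matches the graph of $f$, the point being that such a sheaf exists precisely because the slope map is surjective. Two reductions make this feasible. First, since $\Phi^{\cplx{K}}\mapsto f_{\cplx{K}}$ is compatible with composition (kernels convolve by \eqref{e:compos}) and with inverses, the class of realisable isometries is stable under composition with realisable equivalences of $\cdbc{X}$ and $\cdbc{Y}$. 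Second, because $T$ is normal, Lemma \ref{l:rest-fibre} and Proposition \ref{p:rest-fibre} allow all statements about morphisms of abelian schemes and about the isometry $f$ to be checked on the generic fibre, while Proposition \ref{prop:relativeequiv} and Proposition \ref{prop: ncte} reduce the verification that a candidate $\Phi^{\cplx{K}}$ is an equivalence with flat kernel to the corresponding statements over the closed fibres, where the base field $k$ is algebraically closed and the absolute theory of \cite{Or02,Muk78} applies.

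The first step is to reduce to the case $f\in U_0(X\times_T\hat{X},Y\times_T\hat{Y})$, in which the off-diagonal block $\beta\colon\hat X\to Y$ is an isogeny. Composing $f$ with the isometry induced by the Poincar\'e bundle transform $\Phi^{\cP_X}\colon\cdbc{\hat X}\isom\cdbc{X}$ (or $\Phi^{\cP_Y}$ on the target) replaces $f$ by its product with a ``swap'' element, which interchanges the roles of the blocks and, after finitely many such steps, forces $\beta$ to be an isogeny. These Poincar\'e transforms are themselves relative Fourier--Mukai transforms, so by the composition principle above the reduction is harmless: if the reduced isometry is realised, so is $f$. A final twist by a line bundle on $X$, realised by $L\otimes(-)$, is kept in reserve to correct the discrete part of the answer.

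The crux is the case $f\in U_0$. Here the isometry relation $f^{\dag}=f^{-1}$ of Definition \ref{d:isometric} rigidifies $f$, and, exactly as in the identification $\Upsilon(\cK)\leftrightarrow\Gamma_{f_{\cK}}$ used in the proof of Proposition \ref{flatness}, the graph of $f$ prescribes on each fibre the subgroup $\Upsilon$ of the sought kernel, equivalently its rank and its slope as a class in $\NS(X\times_T Y/T)\otimes_{\bbZ}\bbQ$. Surjectivity of $\mu\colon\vb^{sh}(X\times_T Y/T)\to\NS(X\times_T Y/T)\otimes_{\bbZ}\bbQ$ now furnishes a relatively semihomogeneous bundle $\cE$ of exactly this slope and of suitable rank. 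On each closed fibre $t\in T$ the restriction $\cE_t$ is a semihomogeneous bundle over $k$ of the prescribed slope, so by \cite{Or02,Muk78} the integral functor $\Phi^{\cE_t}\colon\cdbc{X_t}\to\cdbc{Y_t}$ is an equivalence inducing the fibre of $f$. Proposition \ref{prop:relativeequiv} then shows $\Phi^{\cE}$ is a relative Fourier--Mukai transform, its kernel being the flat sheaf $\cE$ in accordance with Proposition \ref{prop: ncte}; since $f_{\cE}$ agrees fibrewise with $f$, Lemma \ref{l:rest-fibre} gives $f_{\cE}=f$ up to an element of the kernel of $\gamma_X$ described in Corollary \ref{c:FMabeliano}, and this residual translation and line bundle twist is absorbed by pre-composing with the reserved elementary transform.

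The main obstacle is the construction in the preceding paragraph: translating the combinatorial datum of $f\in U_0$ into the precise slope and rank of a relatively semihomogeneous bundle so that the induced isometry is \emph{exactly} $f$ and not merely conjugate to it, and checking that this numerical type indeed makes $\Phi^{\cE_t}$ an equivalence on every fibre simultaneously. By contrast the reduction to $U_0$ and the bookkeeping of the discrete part are formal, relying only on the relation $f^{\dag}=f^{-1}$ and on the composition principle; normality of $T$ serves only to transport the fibrewise computations, via Lemma \ref{l:rest-fibre}, to a single global identity $f_{\cE}=f$.
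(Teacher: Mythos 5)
Your treatment of the main case $f\in U_0(X\times_T\hat X,Y\times_T\hat Y)$ is essentially the paper's argument: use surjectivity of the slope map to produce a relatively semihomogeneous (and relatively simple) bundle $\cE$ on $X\times_T Y$ whose slope corresponds to the datum of $f$, check on each closed fibre via \cite{Or02} that $\Phi^{\cE_t}$ is an equivalence with $f_{\cE_t}=f_t$, globalize the equivalence by Proposition \ref{prop:relativeequiv} (projectivity coming from normality of $T$ via Raynaud), and conclude $f_\cE=f$ by rigidity. The paper makes the phrase ``the graph of $f$ prescribes the slope'' precise through the explicit symmetric element $\xi(f)=\bigl(\begin{smallmatrix}\beta^{-1}\alpha & -\beta^{-1}\\ -\hat\beta^{-1}&\delta\beta^{-1}\end{smallmatrix}\bigr)$ and the Mumford-map identification $\NS(X\times_T Y/T)\otimes_\bbZ\bbQ\simeq\Hom^{sim}_T(X\times_T Y,\hat X\times_T\hat Y)\otimes_\bbZ\bbQ$; note also that rigidity (Lemma \ref{l:rigid}) gives $f_\cE=f$ on the nose, so no residual correction by the kernel of $\gamma_X$ is needed.

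The genuine gap is in your reduction to $U_0$. You claim that composing $f$ with the ``swap'' isometries induced by $\Phi^{\cP_X}$ or $\Phi^{\cP_Y}$ interchanges the blocks and ``after finitely many such steps forces $\beta$ to be an isogeny.'' This fails in general: swapping only permutes (up to sign and polarization) the four blocks $\alpha,\beta,\gamma,\delta$, and there are isometric isomorphisms for which none of the four blocks is an isogeny. For instance, if $X=Y=E_1\times_T E_2$ with $E_1,E_2$ non-isogenous elliptic schemes and $f$ acts as the identity on the $E_1\times_T\hat E_1$ factor and as the standard swap on the $E_2\times_T\hat E_2$ factor, then every block of $f$ is degenerate and no global swap repairs this. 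The paper's route is different and is the step that actually requires work: following \cite{Or02} (p.~591) one factors the generic fibre as $f_\eta=g_\eta\circ h_\eta$ with $g_\eta\in U_0(X_\eta\times\hat X_\eta,Y_\eta\times\hat Y_\eta)$ and $h_\eta\in U_0(X_\eta\times\hat X_\eta)$ --- a nontrivial factorization, not obtained by swaps alone --- and then lifts this factorization to $T$ using Proposition \ref{p:rest-fibre}, applying the $U_0$ case to each factor and convolving the two kernels. Your composition principle is the right framework, but you need this factorization lemma (or an equivalent substitute) to make the reduction go through.
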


\begin{proof}  One defines a map $$\xi\colon U_0(X\times_T\hat{X}, Y\times_T\hat{Y})\to \text{Hom}^{sim}_T(X\times_T Y, \hat{X}\times_T \hat{Y})\otimes_\mathbb{Z} \mathbb{Q}\, ,$$ that associates to  $f=\begin{pmatrix} \alpha &\beta\\
 \gamma & \delta
\end{pmatrix}$ the symmetric homomorphism  
$\xi(f)=\begin{pmatrix} \beta^{-1}\alpha &-\beta^{-1}\\
 -{\hat \beta}^{-1} & \delta\beta^{-1}
\end{pmatrix}$. On the other hand, it is well known, see for instance \cite[Proposition 1.2]{DP94} and \cite[Lemme XI 1.6]{Ray70}, that the Mumford map $\varphi\colon \Pic(X\times_T Y/T)\to \Hom_T(X\times_T Y, \hat{X}\times_T \hat{Y})$ induces an isomorphism $$\phi\colon \NS(X\times_T Y/T)\otimes_\mathbb{Z}\mathbb{Q}\simeq \text{Hom}^{sim}_T(X\times_T Y, \hat{X}\times_T \hat{Y})\otimes_\mathbb{Z} \mathbb{Q}\, , $$ given by $\phi(\frac{[\mathcal{L}]}{r})=
\frac{\varphi_{\mathcal{L}}}{r}
$. Therefore, since the slope map is surjective, it follows that for any $f\in U_0(X\times_T\hat{X}, Y\times_T\hat{Y})$ there exists a relatively semihomogeneous and relatively simple  vector bundle $\mathcal{E}$ over $X\times_T Y$ such that $\xi(f)=\phi(\mu(\mathcal{E}))$. Let us consider the integral functor $\Phi^\mathcal{E}\colon\dbc{X}\to \dbc{Y}$. For any $t\in T$ we have $\xi(f_t)=\mu(\mathcal{E}_t)$, $(\Phi^\mathcal{E})_t=\Phi^{\mathcal{E}_t}$ and $\mathcal{E}_t$ is a simple semihomogeneous vector bundle, thus \cite[Proposition 4.11]{Or02} implies that $(\Phi^\mathcal{E})_t$ is an equivalence. Since $T$ is normal, it follows from  \cite[Th\'eor\`eme XI 1.4]{Ray70} that  any abelian scheme over $T$ is projective, therefore we can apply Proposition \ref{prop:relativeequiv} to conclude that $\Phi^\mathcal{E}$ is an equivalence. Moreover since $(f_\mathcal{E})_t=f_{\mathcal{E}_t}$ and by \cite[Proposition 4.12]{Or02} we have $f_{\mathcal{E}_t}=f_t$, we get $(f_\mathcal{E})_t=f_t$. Therefore, by Lemma \ref{l:rigid} we have $f_\mathcal{E}=f$. 

Let us now prove the general case. If $f\notin U_0(X\times_T\hat{X}, Y\times_T\hat{Y})$ then proceeding as in \cite{Or02} page 591, we can write $f_\eta$ as the composition of two maps $f_\eta=g_\eta\circ h_\eta$ where $g_\eta\in U_0(X_\eta\times\hat{X}_\eta, Y_\eta\times\hat{Y}_\eta)$ and $h_\eta\in U_0(X_\eta\times\hat{X}_\eta)$. Therefore, by Proposition \ref{p:rest-fibre}  $f$ is factorized also as the composition of two maps $g\in U_0(X\times_T\hat{X}, Y\times_T\hat{Y})$, $h\in U_0(X\times_T\hat{X})$. Hence we can apply to $g$ and $h$ the previous argument to get the corresponding Fourier-Mukai transforms $\Phi^\mathcal{E}$, $\Phi^\mathcal{F}$ such that $f_\mathcal{E}=g$, $f_\mathcal{F}=h$. Now,  if $\mathcal K^\bullet$ is the convolution of $\mathcal{E}$ and $\mathcal{F}$ then the Fourier-Mukai transform $\Phi^{\mathcal K^\bullet}$ verifies $f_{\mathcal K^\bullet}=f$ and the proof is complete.  \end{proof}

This result and the ones previously proved lead to the following theorems.

\begin{thm} Let $p\colon X\to T$ and $q\colon Y\to T$ be two abelian schemes over  a normal connected base such that the slope map $\mu\colon \vb^{sh}(X\times_T Y/T) \to  \NS(X\times_T Y/T)\otimes_\mathbb{Z}\mathbb{Q}$ is surjective. Then $X$ and $Y$ are relative Fourier-Mukai partners if and only if there is an isometric isomorphism $f\colon X\times_T\hat X\xrightarrow{\sim}Y\times_T\hat Y$.
\end{thm}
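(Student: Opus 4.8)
The plan is to deduce both implications from two results already established in this section, so that the only genuinely new ingredient is a projectivity observation. First I would record that, because $T$ is normal and connected, \cite[Th\'eor\`eme XI 1.4]{Ray70} ensures that every abelian scheme over $T$ is projective; hence both $p\colon X\to T$ and $q\colon Y\to T$ are projective, and every projectivity hypothesis invoked below is automatically satisfied.

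For the forward implication I would argue as follows. Assume that $X$ and $Y$ are relative Fourier-Mukai partners, so that some relative Fourier-Mukai transform $\Phi\colon\cdbc{X}\isom\cdbc{Y}$ exists. Since $X$ and $Y$ are now known to be projective abelian schemes over the connected base $T$, this is precisely the setting of Theorem \ref{t:FM-partners}, which I would apply directly to obtain the desired isometric isomorphism $f\colon X\times_T\hat{X}\isom Y\times_T\hat{Y}$ over $T$.

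For the converse I would start from an isometric isomorphism $f\in U(X\times_T\hat{X}, Y\times_T\hat{Y})$, in the sense of Definition \ref{d:isometric}, and feed it into Proposition \ref{t:equivalent}. The surjectivity of the slope map $\mu\colon \vb^{sh}(X\times_T Y/T)\to \NS(X\times_T Y/T)\otimes_\mathbb{Z}\mathbb{Q}$ is exactly the hypothesis of that proposition, which then returns a relative Fourier-Mukai transform $\Phi^{\cplx{K}}$ with $f_{\cplx{K}}=f$. The existence of such a transform is, by definition, the assertion that $X$ and $Y$ are relative Fourier-Mukai partners, completing the equivalence.

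I do not expect a substantial obstacle: both halves are repackagings of earlier results, and the only point requiring care is to note that normality of $T$ forces projectivity of the abelian schemes, so that Theorem \ref{t:FM-partners} genuinely applies. The real mathematical content — constructing the kernel from a relatively simple semihomogeneous bundle via the isomorphism $\phi$ and the slope map, together with the fibrewise-to-relative passage through Proposition \ref{prop:relativeequiv} and the rigidity of Lemma \ref{l:rigid} — resides entirely in Proposition \ref{t:equivalent} and the arguments preceding it, which I would not reproduce here.
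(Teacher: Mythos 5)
Your proposal is correct and matches the paper's (implicit) argument exactly: the forward direction is Theorem \ref{t:FM-partners} and the converse is Proposition \ref{t:equivalent}, with the only point of care being that normality of $T$ forces projectivity of the abelian schemes via Raynaud, which is the same observation the paper itself invokes inside the proof of Proposition \ref{t:equivalent}.
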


\begin{thm}\label{t:FMabelianoNormal} Let $p\colon X\to T$ be an abelian scheme over a connected normal base such that the slope map $\mu\colon \vb^{sh}(X\times_T X/T) \to  \NS(X\times_T X/T)\otimes_\mathbb{Z}\mathbb{Q}$ is surjective. Then there is a short exact sequence of groups:
$$0\to \mathbb{Z}\oplus (X(T)\times \hat{X}(T)\times \Pic(T))\to \FM_T(\cdbc{X})\to U(X\times_T\hat{X})\to 1\, .$$
\end{thm}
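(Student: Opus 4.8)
The plan is to obtain this short exact sequence by combining Corollary \ref{c:FMabeliano} with Proposition \ref{t:equivalent}, the only new point being the surjectivity of the right-hand arrow. First I would observe that since $T$ is normal the abelian scheme $X\to T$ is automatically projective by \cite[Th\'eor\`eme XI 1.4]{Ray70}, so Corollary \ref{c:FMabeliano} applies and already provides the exactness of
$$0\to \mathbb{Z}\oplus (X(T)\times \hat{X}(T)\times \Pic(T))\to \FM_T(\cdbc{X})\xrightarrow{\gamma_X} U(X\times_T\hat{X})\, .$$
In particular the injectivity of the first map and the exactness at $\FM_T(\cdbc{X})$ are already settled, and the whole task reduces to showing that $\gamma_X$ is surjective onto $U(X\times_T\hat{X})$.

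For the surjectivity I would apply Proposition \ref{t:equivalent} in the special case $Y=X$. The hypothesis of the present theorem is exactly that the slope map $\mu\colon \vb^{sh}(X\times_T X/T)\to \NS(X\times_T X/T)\otimes_{\mathbb{Z}}\mathbb{Q}$ is surjective, which is precisely what that proposition requires once $Y=X$. Hence, given any isometric isomorphism $f\in U(X\times_T\hat{X})$, the proposition yields a relative Fourier-Mukai transform $\Phi^{\cplx{K}}\colon \cdbc{X}\to\cdbc{X}$ with $f_{\cplx{K}}=f$; since $\gamma_X(\Phi^{\cplx{K}})=f_{\cplx{K}}$ by the definition of $\gamma_X$, this places $f$ in the image of $\gamma_X$ and completes the sequence with the terminal $\to 1$.

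The deduction is thus purely formal, and I expect no obstacle beyond matching the hypotheses correctly; all the substantive work has already been done in Proposition \ref{t:equivalent}. There one factors an arbitrary isometric isomorphism through the subset $U_0$, transfers it via $\xi$ and the isomorphism induced by the Mumford map to a symmetric homomorphism, uses the surjectivity of the slope map to realise that homomorphism by a relatively simple semihomogeneous bundle, and finally descends the fibrewise equivalences to a genuine relative Fourier-Mukai transform over $T$ by means of the fibrewise criterion of Proposition \ref{prop:relativeequiv} together with the rigidity of Lemma \ref{l:rigid}.
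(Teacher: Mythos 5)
Your proposal is correct and matches the paper's intended argument exactly: the theorem is stated as a direct consequence of Corollary \ref{c:FMabeliano} (exactness on the left, available since normality of $T$ forces projectivity of $X\to T$ by Raynaud) together with Proposition \ref{t:equivalent} applied with $Y=X$ for the surjectivity of $\gamma_X$. No gaps.
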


\section{Relative Fourier-Mukai transforms for Fano and anti-Fano fibrations}
\subsection{Fourier-Mukai transform for Fano and anti-Fano varieties}

\quad
\quad

Let $X$ be a smooth irreducible projective variety whose anticanonical (Fano case) or canonical (anti-Fano case) sheaf is ample. Under these assumptions Bondal and Orlov proved that $X$ is uniquely determined by its derived category $\cdbc{X}$ and the group of autoequivalences for $\cdbc{X}$ reduces to trivial transforms.  These results have been extended recently to Gorenstein schemes. 

\begin{thm}\label{thm:BO}
Let $X$ be a connected equidimensional Gorenstein projective scheme with ample canonical or anticanonical sheaf, then
\begin{enumerate}
\item If there is an equivalence $\cdbc{X}\simeq\cdbc{Y}$, then $X$ is isomorphic to $Y$.
\item The group of autoequivalences of $\cdbc{X}$ is generated by the shift functor on $\cdbc{X}$, together with pull-backs of automorphisms of $X$ and twists by line bundles, that is
$$\Aut\cdbc{X}\simeq \Aut (X)\ltimes (\Pic(X)\oplus \bbZ)\,.$$
\end{enumerate}
\end{thm}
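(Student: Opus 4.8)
The plan is to follow the reconstruction method of Bondal and Orlov \cite{BO02}, adapting it to the Gorenstein setting as carried out in \cite{Ballard092} (see also \cite[Corollary 1.17]{SS10}). The starting point is Grothendieck--Serre duality: since $X$ is Gorenstein and equidimensional of dimension $n$, its dualizing complex is $\omega_X[n]$ with $\omega_X$ a line bundle, and this provides a Serre-type functor $S_X\simeq(-)\otimes\omega_X[n]$ governing the duality on $\cdbc{X}$. The whole argument revolves around the hypothesis that $\omega_X$ (anti-Fano case) or $\omega_X^{-1}$ (Fano case) is ample, since ampleness is exactly what makes this functor strong enough to detect the geometry of $X$.

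First I would characterise, purely in terms of the triangulated structure together with the Serre functor, two distinguished classes of objects. A \emph{point object} is an object $P$ with $S_X(P)\simeq P[n]$, $\Hom(P,P)=k$, and $\Hom(P,P[i])=0$ for $i<0$; an \emph{invertible object} is one that pairs nontrivially with every point object in a single cohomological degree. I would then show that, thanks to the ampleness of $\omega_X^{\pm1}$, the point objects are precisely the shifts $\cO_x[m]$ of skyscraper sheaves at closed points, and the invertible objects are precisely the shifts of line bundles. The main obstacle lies here: in the singular Gorenstein case the skyscraper $\cO_x$ at a singular point is not perfect and a genuine Serre functor is not available on all of $\cdbc{X}$, so one must replace it by the Grothendieck duality functor $\bR\SHom(-,\omega_X[n])$ and work with the correct class of objects. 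Verifying that the intrinsic point-object and invertible-object axioms still single out exactly the skyscrapers and the line bundles is the technical heart of the argument, and is where Ballard's refinement of \cite{BO02} is needed.

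With these characterisations in hand, part (1) follows formally. An equivalence $\cdbc{X}\isom\cdbc{Y}$ is compatible with the canonical duality, hence carries point objects to point objects and invertible objects to invertible objects. Fixing $\cO_X$ as an invertible object, one recovers intrinsically the graded (anti)canonical ring $\bigoplus_{m\ge0}\Hom(\cO_X,\,S_X^{\pm m}(\cO_X)[\mp mn])\simeq\bigoplus_{m\ge0}H^0(X,\omega_X^{\pm m})$ from the category together with $S_X$. Since the equivalence induces an isomorphism of these graded rings and ampleness of $\omega_X^{\pm1}$ gives $X\simeq\operatorname{Proj}\bigoplus_{m\ge0}H^0(X,\omega_X^{\pm m})$, we conclude that $X\simeq Y$.

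For part (2) I would argue as follows. By the reconstruction result of Ballard \cite{Ballard09}, every autoequivalence of $\cdbc{X}$ is an integral functor $\Phi^{\cplx K}$. Such a functor permutes point objects, so after composing with a suitable shift $[m]$ it sends each skyscraper $\cO_x$ to a skyscraper; connectedness of $X$ together with the codimension invariant of point objects forces this shift to be uniform, in the spirit of Corollary \ref{prop:skyscraper}. Proposition \ref{p:trivial} then yields $\Phi^{\cplx K}\simeq\bR f_\ast(\calL\otimes(-))[m]$ for a morphism $f\colon X\to X$ and a line bundle $\calL\in\Pic(X)$, and since $\Phi^{\cplx K}$ is an equivalence the morphism $f$ must be an automorphism. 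Conversely every such functor is manifestly an autoequivalence. Finally, the assignment $\Phi^{\cplx K}\mapsto(f,\calL,m)$ is a group isomorphism onto $\Aut(X)\ltimes(\Pic(X)\oplus\bbZ)$: the shift is central, tensoring by line bundles commutes with itself, and $\Aut(X)$ acts on $\Pic(X)$ by pullback, which produces the semidirect factor.
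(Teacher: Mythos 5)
Your proposal is correct and follows exactly the route the paper itself relies on: the paper gives no independent argument for Theorem \ref{thm:BO} but simply cites Bondal--Orlov \cite{BO01} and its Gorenstein extensions \cite{Ballard092, SS10}, and your sketch reproduces the point-object/invertible-object reconstruction and the skyscraper-permutation argument from those references, combined with the paper's own Corollary \ref{prop:skyscraper} and Proposition \ref{p:trivial} for part (2). The only caveat is that the genuinely hard step you flag --- characterising point objects and invertible objects without a global Serre functor on the singular $\cdbc{X}$ --- is deferred to Ballard's work rather than carried out, which matches the level of detail the paper itself provides.
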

\begin{proof}
See \cite[Theorem 2.5 and Theorem 3.1]{BO01} for the original arguments in the case of smooth projective varieties. For the singular case see \cite[Corollary 6.3 and Proposition 6.18]{ Ballard11}, see also \cite[Theorem 1.15 and Corollary 1.17]{SS10}.
\end{proof}

Using Proposition \ref{prop: ncte} and the second part of Theorem \ref{thm:BO} one can describe the group of relative Fourier-Mukai transforms for Fano or anti-Fano fibrations.

 \subsection{Relative Fourier-Mukai transforms and Fano or anti-Fano fibrations}
 Let $p\colon X\to T$ be a Fano or anti-Fano fibration, that is, a Gorenstein projective morphism, with T connected, whose fibres have  either ample  (Fano case)  or antiample  (anti-Fano case) canonical sheaf.
 
\begin{thm}\label{t:Fano}
The group $\FM_T(\cdbc{X})$ of relative Fourier-Mukai transforms is generated by relative automorphisms of $X$, twists by line bundles and shifts. That is
$$\FM_T(\cdbc{X})\simeq \Aut (X/T)\ltimes (\Pic(X)\oplus \bbZ)\, .$$
\end{thm}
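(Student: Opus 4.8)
The strategy is to leverage the fibrewise criterion (Proposition \ref{prop:relativeequiv}) together with the rigidity of the fibres supplied by Theorem \ref{thm:BO}, reducing the relative statement to a pointwise application of the absolute Bondal--Orlov result. First I would take an arbitrary relative Fourier-Mukai transform $\Phi=\Phi^{\cplx K}\in\FM_T(\cdbc X)$, with kernel $\cplx K\in\cdbc{X\times_T X}$ of finite homological dimension over both factors. The key initial observation is that on each fibre the induced functor $\Phi_t\colon\cdbc{X_t}\to\cdbc{X_t}$ is an autoequivalence (by Proposition \ref{prop:relativeequiv} applied in the equivalence direction), and each fibre $X_t$ is a connected equidimensional Gorenstein projective scheme with ample (anti)canonical sheaf. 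Hence Theorem \ref{thm:BO}(2) tells us that every $\Phi_t$ is a trivial transform: up to a shift it sends skyscraper sheaves of points to skyscraper sheaves of points.

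**Reducing to Proposition \ref{p:trivial}.**
The next step is to globalise this fibrewise triviality. Since each $\Phi_t(\cO_x)$ is, up to shift by some integer $n_t$, a skyscraper sheaf on $X_t$, the restriction $\bL j_t^\ast\cplx K$ is of the form $\cK_t[n_t]$ with $\cK_t$ a sheaf on $X_t\times X_t$ flat over the first factor (indeed supported on a graph, so flat of rank one). As $X$ is connected, Proposition \ref{prop: ncte} then forces a uniform shift: $\cplx K\simeq\cK[n]$ for a single sheaf $\cK$ on $X\times_T X$ flat over $X$ and a single $n\in\bbZ$. Consequently, for every closed point $x\in X$ lying over $t=p(x)$ one has $\Phi(\cO_x)\simeq {i_x}_\ast\cK_x[n]$, and from the fibrewise computation $\cK_x[n]\simeq\Phi_t(\cO_x)$ is a shifted skyscraper sheaf $\cO_y[n]$. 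Thus $\Phi[-n]$ sends skyscraper sheaves of points to skyscraper sheaves of points, and its kernel $\cK$ has finite homological dimension over $X$. At this point Proposition \ref{p:trivial} applies directly and yields a relative morphism $f\colon X\to X$ and a line bundle $\calL\in\Pic(X)$ with $\Phi^{\cplx K}[-n]\simeq\bR f_\ast(\calL\otimes(-))$, so $\Phi^{\cplx K}\simeq\bR f_\ast(\calL\otimes(-))[n]$.

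**Finishing.**
It remains to check that $f$ is an automorphism and to identify the group structure. Since $\Phi$ is an equivalence, the induced functor $\bR f_\ast(\calL\otimes(-))$ must be invertible; as in the Weierstra\ss\ case (compare the proof of Theorem \ref{t:FMW}), this forces $f$ to be a relative isomorphism $f\in\Aut(X/T)$, whence $\bR f_\ast=f_\ast$ and $\Phi^{\cplx K}\simeq f_\ast(\calL\otimes(-))[n]$. This shows $\FM_T(\cdbc X)$ is contained in the group generated by relative automorphisms, line-bundle twists and shifts; the reverse inclusion is clear since each of these trivial transforms is manifestly a relative Fourier-Mukai transform. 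Disentangling how these generators combine gives the asserted semidirect-product decomposition $\Aut(X/T)\ltimes(\Pic(X)\oplus\bbZ)$.

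**The main obstacle.**
The delicate point is the passage from the purely fibrewise statement to a single global description, i.e.\ verifying that the hypotheses of Proposition \ref{prop: ncte} genuinely hold. One must confirm that the fibrewise kernels $\cK_t$ are honest shifted sheaves \emph{flat over the first factor} rather than merely complexes concentrating skyscrapers, which relies on $\Phi_t$ being a trivial transform with structure-sheaf-of-a-graph kernel; the paper flags precisely this as the reason Fano and anti-Fano fibrations are easy (the fibre transforms being trivial makes flatness immediate). A secondary subtlety is ensuring $f$ is a genuine relative morphism over $T$ and is invertible, which uses $T$-linearity of $\Phi$ and the equivalence property, exactly as in the elliptic case. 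Once these are in place the argument is formally parallel to the kernel computation in Theorem \ref{t:FMW}.
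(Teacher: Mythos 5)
Your proposal is correct and follows essentially the same route as the paper: apply Theorem \ref{thm:BO}(2) fibrewise to see each $\Phi_t$ is a trivial transform with shifted flat-sheaf kernel, invoke Proposition \ref{prop: ncte} to get a uniform shift and a globally flat kernel, and conclude via Proposition \ref{p:trivial}. The only difference is that you spell out the final verification that $f$ is a relative automorphism slightly more explicitly than the paper does, which is harmless.
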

\begin{proof}
Let $\Phi=\Phi^{\cplx K}\in \FM_T\cdbc{X}$ be a relative Fourier-Mukai transform. For any $t\in T$  one has, by (2) of Theorem \ref{thm:BO}, that $\Phi_t\simeq {f_t}_\ast(\calL_t\otimes -)[n_t]$ with $f_t\in \Aut (X_t)$, \, $\calL_t\in \Pic (X_t)$ and $n_t\in \bbZ$. Hence, the kernel of $\Phi_t$ is isomorphic to $\cK_t[n_t]$,  and $\cK_t$ is a sheaf on $X_t\times X_t$ flat over $X_t$. Since $T$ is connected we obtain by Proposition  \ref{prop: ncte} that $\cplx K\simeq \cK[n]$, with $\cK$  a sheaf on $X\times_T X$ flat over $X$ and $n\in\bbZ$. Thus $\Phi[-n]$ sends skyscraper sheaves to skyscraper sheaves and we conclude by Proposition \ref{p:trivial}.
\end{proof}
\begin{rem} Notice that under the conditions of Corollary 2.12 in \cite{SS10}, this theorem proves that the subgroup $\FM_T(\cdbc{X})$ coincides with the group $\Aut_T \cdbc{X}$ of all exact $T$-linear auto-equivalences of $\cdbc{X}$.
\end{rem}

\begin{rem} In particular, the group of relative Fourier-Mukai transforms of any projective bundle $\mathbb P(\mathcal E)\to T$ coincides with the trivial transforms, that is $$\FM_T(\cdbc{\mathbb P(\mathcal E)})\simeq \Aut (\mathbb P(\mathcal E)/T)\ltimes (\Pic(\mathbb P(\mathcal E))\oplus \bbZ)\, .$$
\end{rem}

\def\cprime{$'$}


\begin{thebibliography}{10}

\bibitem{AH03}
{\sc B.~Andreas and D.~Hern{\'a}ndez~Ruip{\'e}rez}, {\em Comments on {$N=1$}
  heterotic string vacua}, Adv. Theor. Math. Phys., 7 (2003), pp.~751--786.

\bibitem{AHS10}
{\sc B.~Andreas, D.~Hern{\'a}ndez~Ruip{\'e}rez, and D.~S{\'a}nchez~G{\'o}mez},
  {\em Stable sheaves over {$K3$} fibrations}, Internat. J. Math., 21 (2010),
  pp.~25--46.

\bibitem{AYCH01}
{\sc B.~Andreas, S.-T. Yau, G.~Curio, and D.~H. Ruip{\'e}rez}, {\em Fibrewise
  {$T$}-duality for {D}-branes on elliptic {C}alabi-{Y}au}, J. High Energy
  Phys.,  (2001), pp.~Paper 20, 13.

\bibitem{Ballard09}
{\sc R.~M. Ballard}, {\em Equivalences of derived categories of sheaves on
  quasi-projective schemes}.
\newblock math. AG. 09053148v2.

\bibitem{Ballard11}
\leavevmode\vrule height 2pt depth -1.6pt width 23pt, {\em Derived categories
  of sheaves on singular schemes with an application to reconstruction}, Adv.
  Math., 227 (2011), pp.~895--919.

\bibitem{BBH08}
{\sc C.~Bartocci, U.~Bruzzo, and D.~Hern{\'a}ndez~Ruip{\'e}rez}, {\em
  Fourier-{M}ukai and {N}ahm transforms in geometry and mathematical physics},
  vol.~276 of Progress in Mathematics, Birkh\"auser Boston Inc., Boston, MA,
  2009.

\bibitem{BL10}
{\sc M.~Behrens and T.~Lawson}, {\em Topological automorphic forms}, Mem. Amer.
  Math. Soc., 204 (2010), pp.~xxiv+141.

\bibitem{BO02}
{\sc A.~Bondal and D.~Orlov}, {\em Derived categories of coherent sheaves}, in
  Proceedings of the International Congress of Mathematicians, Vol. II
  (Beijing, 2002), Beijing, 2002, Higher Ed. Press, pp.~47--56.

\bibitem{BO01}
{\sc A.~I. Bondal and D.~O. Orlov}, {\em Reconstruction of a variety from the
  derived category and groups of autoequivalences}, Compositio Math., 125
  (2001), pp.~327--344.

\bibitem{Bri99}
{\sc T.~Bridgeland}, {\em Equivalences of triangulated categories and
  {F}ourier-{M}ukai transforms}, Bull. London Math. Soc., 31 (1999),
  pp.~25--34.

\bibitem{BroPloog10}
{\sc N.~Broomhead and D.~Ploog}, {\em Autoequivalences of toric surfaces}.
\newblock math.AG. 1010.1717v1.

\bibitem{BuKr05}
{\sc I.~Burban and B.~Kreu{\ss}ler}, {\em Derived categories of irreducible
  projective curves of arithmetic genus one}, Compos. Math., 142 (2006),
  pp.~1231--1262.

\bibitem{CanStella11}
{\sc A.~Canonaco and P.~Stellari}, {\em Fourier-Mukai functors: A survey}.
\newblock math. AG. 1109.3083.

\bibitem{DP94}
{\sc P.~Deligne and G.~Pappas}, {\em Singularit\'es des espaces de modules de
  {H}ilbert, en les caract\'eristiques divisant le discriminant}, Compositio
  Math., 90 (1994), pp.~59--79.

\bibitem{EGAIV2}
{\sc A.~Grothendieck}, {\em \'{E}l\'ements de g\'eom\'etrie alg\'ebrique. {IV}.
  \'{E}tude locale des sch\'emas et des morphismes de sch\'emas. {II}}, Inst.
  Hautes \'Etudes Sci. Publ. Math.,  (1965), p.~231.

\bibitem{Gro77}
{\sc A.~Grothendieck}, {\em Cohomologie l-adique et functions l}, in Groupes de
  monodromie en g\'eom\'etrie alg\'ebrique. {I}, Springer-Verlag, Berlin, 1977,
  pp.~351--371.
\newblock S\'eminaire de G\'eom\'etrie Alg\'ebrique du Bois-Marie 1965--1966
  (SGA 5), Expos\'e VIII. Dirig\'e par A. Grothendieck. Lecture Notes in
  Mathematics, Vol. 589.

\bibitem{Hart77}
{\sc R.~Hartshorne}, {\em Algebraic geometry}, Graduate Texts in Mathematics,
  vol. 52, Springer-Verlag, New York, 1977.

\bibitem{HLST09}
{\sc D.~Hern{\'a}ndez~Ruip{\'e}rez, A.~C. L{\'o}pez~Mart{\'{\i}}n,
  D.~S{\'a}nchez~G{\'o}mez, and C.~Tejero~Prieto}, {\em Moduli spaces of
  semistable sheaves on singular genus 1 curves}, Int. Math. Res. Not. IMRN,
  (2009), pp.~4428--4462.

\bibitem{HLS07}
{\sc D.~Hern\'andez~Ruip\'erez, A.~C. L\'opez~Mart\'{\i}n, and F.~Sancho~de
  Salas}, {\em {F}ourier-{M}ukai transform for {G}orenstein schemes}, Adv.
  Math., 211 (2007), pp.~594--620.

\bibitem{HLS08}
{\sc D.~Hern{\'a}ndez~Ruip{\'e}rez, A.~C. L{\'o}pez~Mart{\'{\i}}n, and
  F.~Sancho~de Salas}, {\em Relative integral functors for singular fibrations
  and singular partners}, J. Eur. Math. Soc. (JEMS), 11 (2009), pp.~597--625.

\bibitem{HMP02}
{\sc D.~Hern{\'a}ndez~Ruip{\'e}rez and J.~M. Mu{\~n}oz~Porras}, {\em Stable
  sheaves on elliptic fibrations}, J. Geom. Phys., 43 (2002), pp.~163--183.

\bibitem{HiVdB}
{\sc L.~Hille and M.~D. Van~den Bergh}, {\em {F}ourier-{M}ukai transforms}, in
  {H}andbook on tilting theory, vol.~332 of London Math. Soc. Lecture Note
  Series, Cambridge Univ. Press, 2007.

\bibitem{Huybbook}
{\sc D.~Huybrechts}, {\em Fourier-{M}ukai transforms in algebraic geometry},
  Oxford Mathematical Monographs, The Clarendon Press -- Oxford University
  Press, Oxford, 2006.

\bibitem{HMS09}
{\sc D.~Huybrechts, E.~Macr{\`{\i}}, and P.~Stellari}, {\em Derived
  equivalences of {$K3$} surfaces and orientation}, Duke Math. J., 149 (2009),
  pp.~461--507.

\bibitem{kod}
{\sc K.~Kodaira}, {\em On compact analytic surfaces. {II}, {III}}, Ann. of
  Math. (2) 77 (1963), 563--626; ibid., 78 (1963), pp.~1--40.

\bibitem{Kuz07}
{\sc A.~Kuznetsov}, {\em Homological projective duality}, Publ. Math. Inst.
  Hautes \'Etudes Sci.,  (2007), pp.~157--220.

\bibitem{LOZ96}
{\sc H.~W. Lenstra, Jr., F.~Oort, and Y.~G. Zarhin}, {\em Abelian
  subvarieties}, J. Algebra, 180 (1996), pp.~513--516.

\bibitem{OL10}
{\sc V.~A. Lunts and D.~O. Orlov}, {\em Uniqueness of enhancement for
  triangulated categories}, J. Amer. Math. Soc., 23 (2010), pp.~853--908.

\bibitem{Mir89}
{\sc R.~Miranda}, {\em The basic theory of elliptic surfaces}, Dottorato di
  Ricerca in Matematica. [Doctorate in Mathematical Research], ETS Editrice,
  Pisa, 1989.

\bibitem{Muk78}
{\sc S.~Mukai}, {\em Semi-homogeneous vector bundles on an {A}belian variety},
  J. Math. Kyoto Univ., 18 (1978), pp.~239--272.

\bibitem{Muk87b}
\leavevmode\vrule height 2pt depth -1.6pt width 23pt, {\em Fourier functor and
  its application to the moduli of bundles on an abelian variety}, in Algebraic
  geometry, Sendai, 1985, Adv. Stud. Pure Math., vol. 10, North-Holland,
  Amsterdam, 1987, pp.~515--550.

\bibitem{GIT}
{\sc D.~Mumford, J.~Fogarty, and F.~Kirwan}, {\em Geometric invariant theory},
  vol.~34 of Ergebnisse der Mathematik und ihrer Grenzgebiete (2) [Results in
  Mathematics and Related Areas (2)], Springer-Verlag, Berlin, third~ed., 1994.

\bibitem{Or97}
{\sc D.~O. Orlov}, {\em Equivalences of derived categories and ${K}3$
  surfaces}, J. Math. Sci. (New York), 84 (1997), pp.~1361--1381.
\newblock Algebraic geometry, 7.

\bibitem{Or02}
\leavevmode\vrule height 2pt depth -1.6pt width 23pt, {\em Derived categories
  of coherent sheaves on abelian varieties and equivalences between them}, Izv.
  Ross. Akad. Nauk Ser. Mat., 66 (2002), pp.~131--158.

\bibitem{Pol96}
{\sc A.~Polishchuk}, {\em Symplectic biextensions and a generalization of the
  {F}ourier-{M}ukai transform}, Math. Res. Lett., 3 (1996), pp.~813--828.

\bibitem{Pol02}
\leavevmode\vrule height 2pt depth -1.6pt width 23pt, {\em Analogue of {W}eil
  representation for abelian schemes}, J. Reine Angew. Math., 543 (2002),
  pp.~1--37.

\bibitem{Ray70}
{\sc M.~Raynaud}, {\em Faisceaux amples sur les sch\'emas en groupes et les
  espaces homog\`enes}, Lecture Notes in Mathematics, Vol. 119,
  Springer-Verlag, Berlin, 1970.

\bibitem{SS10}
{\sc C.~Sancho~de Salas and F.~Sancho~de Salas}, {\em Reconstructing schemes
  from the derived category}.
\newblock Preprint, 2010.

\bibitem{SeTh01}
{\sc P.~Seidel and R.~Thomas}, {\em Braid group actions on derived categories
  of coherent sheaves}, Duke Math. J., 108 (2001), pp.~37--108.

\bibitem{Spal98}
{\sc N.~Spaltenstein}, {\em Resolutions of unbounded complexes}, Compositio
  Math., 65 (1988), pp.~121--154.

\end{thebibliography}
\end{document}